\documentclass[11pt]{article}
\usepackage[T1]{fontenc}
\usepackage[in]{fullpage}
\usepackage{amsfonts}

\usepackage{geometry}
\usepackage{bm}
\usepackage{cite}
\usepackage{latexsym}
\usepackage{enumerate,verbatim}
\usepackage{amsfonts}

\usepackage[english]{babel}
\usepackage[ansinew]{inputenc}
\usepackage{multirow}
\usepackage{amsmath,amsthm,amsfonts,amssymb}
\usepackage{graphics}
\usepackage{mathrsfs}

\usepackage{enumitem}

\usepackage[dvipsnames]{xcolor}
\usepackage{url}

\usepackage{hyperref}
\hypersetup{pdftex, colorlinks=true, citecolor=MidnightBlue, linkcolor=red!80!black, urlcolor=MidnightBlue}

\usepackage{graphicx}
\usepackage{colortbl}
\usepackage{xcolor}
\usepackage{caption}

\usepackage{pifont}

\usepackage[ruled,linesnumbered]{algorithm2e}
\usepackage{appendix}

\newtheorem{theorem}{Theorem}[section]

\newtheorem{assumption}{Assumption}[section]
\newtheorem{lemma}{Lemma}[section]
\newtheorem{proposition}{Proposition}[section]

\theoremstyle{definition}
\newtheorem{definition}{Definition}[section]

\theoremstyle{remark}
\newtheorem{remark}{Remark}[section]

\theoremstyle{plain}
\newtheorem{fact}{Fact}[section]

\numberwithin{equation}{section}

\newcommand{\y}{\bm{y}}
\newcommand{\z}{\bm{z}}
\newcommand{\x}{\bm{x}}

\newcommand{\bz}{\mathbf 0}

\newcommand{\R}{\mathbb R}

\newcommand{\X}{\mathcal X}
\newcommand{\Y}{\mathcal Y}

\DeclareMathOperator{\proj}{proj}

\DeclareMathOperator{\dist}{dist}

\DeclareMathOperator*{\argmin}{argmin}

\def\cA{{\cal A}}

\def\cC{{\cal C}}

\def\cI{{\cal I}}

\def\cL{{\cal L}}

\def\cN{{\cal N}}

\def\cX{{\cal X}}
\def\cY{{\cal Y}}

\def\a{\bm{a}}

\def\d{\bm{d}}
\def\e{\bm{e}}

\def\g{\bm{g}}

\def\q{\bm{q}}
\def\r{\bm{r}}
\def\s{\bm{s}}
\def\t{\bm{t}}
\def\u{\bm{u}}
\def\v{\bm{v}}
\def\w{\bm{w}}
\def\x{\bm{x}}
\def\y{\bm{y}}
\def\z{\bm{z}}
\def\bz{{\mathbf 0}}

\def\1{{\mathbf 1}}

\title{
Nonconvex Composite Functional Constraints via First-Order Augmented Lagrangian Methods under Local Regularity
}
\author{Linglingzhi Zhu\thanks{H. Milton Stewart School of Industrial and Systems Engineering, Georgia Institute of Technology, Atlanta, GA, USA. (\href{mailto:llzzhu@gatech.edu}{llzzhu@gatech.edu})} \and Jiajin Li\thanks{Sauder School of Business, University of British Columbia, Vancouver, BC, Canada. (\href{mailto:jiajin.li@sauder.ubc.ca}{jiajin.li@sauder.ubc.ca})}}
\date{July 2026}

\begin{document}
\maketitle

\begin{abstract}
We study nonasymptotic convergence of primal-dual methods for a class of nonconvex constrained optimization problems with a convex-composite structure. In this class, both the objective and the functional inequality constraints are given by convex Lipschitz outer functions composed with smooth nonlinear inner mappings. The analysis is complicated by constraint violation in a nonconvex functional inequality system and by the lack of an a priori bound on the multipliers. To address these issues, we restrict the dual variable to an auxiliary compact set and analyze a smoothed prox-linear augmented Lagrangian method through a nonsmooth nonconvex-concave minimax reformulation.
The main contribution is a finite-time mechanism for converting stationarity of the truncated minimax problem into a KKT certificate for the original constrained problem. We show that, for a sufficiently large penalty parameter, all but a controlled number of iterates enter a near-feasible region. On this region, a local conic regularity condition uniformly bounds the associated prox-linear multipliers and thereby makes the artificial dual truncation inactive at the selected iterates. Building on this mechanism, we establish explicit convergence rates for the proposed method in terms of the KKT residual. With dual regularization, a global dual error bound together with a bias-balancing argument gives an $\mathcal O(K^{-1/3})$ rate. In the unregularized case, under additional local structural assumptions including piecewise linearity of the outer functions, a local dual error bound yields the sharper $\mathcal O(K^{-1/2})$  rate.
\end{abstract}

\medskip

\noindent\textbf{Mathematics Subject Classification (2020).}
Primary 90C26, 90C30; Secondary 90C46, 49J52.

\section{Introduction}

We consider nonsmooth nonconvex constrained optimization problems whose
objective and functional inequality constraints have the convex-composite form
\begin{equation}\label{eq:problem}\tag{P}
\begin{array}{c@{\quad}l}
\min\limits_{\x\in\cX} & h_0(c_0(\x)) \\[0.2em]
{\rm s.t.} & h_i(c_i(\x))\leq 0,\quad i=1,\ldots,d .
\end{array}
\end{equation}
Here, \(\cX\subseteq\R^n\) is a nonempty compact convex set, each outer
function \(h_i\) is convex and Lipschitz continuous, and each inner mapping
\(c_i\) is smooth and possibly nonlinear. 
This convex-composite
structure is classical in nonsmooth optimization and model-based first-order
methods~\cite{lewis2016proximal,drusvyatskiy2019efficiency,davis2019stochastic}.
It naturally covers empirical conditional value-at-risk constraints after
introducing an auxiliary quantile variable~\cite{rockafellar2000optimization},
as well as maximum-type and finite robust constraints represented by convex
nonsmooth outer functions applied to smooth scenario losses
\cite{ben2009robust,bertsimas2011theory}. Together, these examples illustrate
that the convex-composite formulation provides a natural framework for
constrained problems in which both the objective and the functional constraints
may be nonsmooth and nonconvex.

A growing body of work has developed convergence and complexity analyses for
penalty- and Lagrangian-based methods in constrained nonconvex optimization.
For problems with smooth nonlinear constraints, inexact and proximal augmented
Lagrangian methods (ALMs), quadratic-penalty schemes, and primal--dual methods
based on the augmented Lagrangian have been studied under various regularity
assumptions
\cite{bolte2018nonconvex,xie2021complexity,lin2022complexity,li2024stochastic,
he2023newton,zhu2024first,alacaoglu2024complexity}. A common technical issue in
these analyses is the control of multiplier sequences. Existing approaches
typically obtain multiplier boundedness through a global error-bound or
a P\L{}-type regularity condition for the feasibility violation, a uniform
constraint qualification, or an explicit bounded-multiplier
assumption.  For nonconvex problems with convex inequality constraints, related smoothed,
Moreau-envelope, and damped augmented Lagrangian methods establish
complexity bounds for finding approximate stationary or KKT points under
local error-bound or Slater-type conditions
\cite{pu2024smoothed,huang2025inexact,dahal2026damped}. Multiplier control is
also central in the asymptotic ALM literature, where boundedness of multiplier
sequences is obtained by imposing weak but still external constraint
qualifications, such as relaxed constant positive linear dependence or relaxed
quasinormality, at the relevant limit points or along the generated sequence
\cite{andreani2012relaxed,andreani2025relaxed}.

A complementary line of work develops primal-only first-order methods for
weakly convex or nonsmooth functional constraints, including quadratically
regularized subgradient methods and inexact proximal-point or exact-penalty
schemes
\cite{ma2020quadratically,boob2023stochastic,jia2025first,liu2025single,
yang2025single}. These methods handle functional constraints primarily through
primal, penalty, or proximal mechanisms, rather than through an explicit
augmented-Lagrangian multiplier trajectory. They are therefore distinct from the
primal--dual route developed in this paper.

Overall, existing nonasymptotic ALM-type analyses control multipliers through
global regularity assumptions or conditions tailored to convex constraint
systems. This raises a natural question: \emph{Can multiplier control in a primal--dual ALM-type analysis be obtained from purely local regularity?}
Classical constraint qualifications, such as MFCQ, LICQ,
or Robinson-type regularity, rule out abnormal multiplier behavior and imply
local boundedness of the relevant multiplier sets around feasible points
\cite{Robinson1980,BonnansShapiro2000,dontchev2009implicit}. Hence, local
multiplier regularity should suffice once the iterates used for the final
certificate are known to be near feasible. The algorithmic difficulty is that
ALM iterates are not feasible a priori; the analysis must first show that the
relevant iterates enter the region where local regularity applies. In special
structured settings, such feasibility control can sometimes be obtained from
problem geometry; for instance, orthogonality constraints on the Stiefel
manifold admit this type of control through their manifold structure
\cite{zhu2026primal}. For the general nonconvex functional inequalities
in~\eqref{eq:problem}, however, no comparable structure is available. We
therefore pursue a global-to-local route: We first prove that the ALM dynamics
produces near-feasible certificate iterates, and then impose multiplier
regularity only on the recovered near-feasible region.

To implement this route, we introduce an auxiliary compact dual set
$ 
    \cY:=\{\y\in\R^d_+:\|\y\|_1\le R_y\}
$
and study the corresponding compact-dual minimax reformulation of
\eqref{eq:problem}.
This compactification is not a modeling assumption on the
original constrained problem; it is an analytical device that makes projected
primal--dual estimates available.
With a compact dual domain, the truncated problem can be analyzed within the
nonsmooth nonconvex-concave minimax framework of smoothed prox-linear
descent--ascent methods~\cite{li2025nonsmooth}. This observation motivates the
smoothed prox-linear ALM studied in this paper. The key issue is that the compact dual set is artificial: A stationarity guarantee for the truncated minimax problem yields a
KKT certificate for the original problem only if  the selected dual iterates do not lie on the artificial boundary
$\|\y\|_1=R_y$. Our main technical contribution is to prove
this inactivity. The proof combines two counting arguments, both derived from a
common Lyapunov sufficient decrease estimate: One controls constraint violation,
and the other controls the algorithmic residuals. When the penalty parameter is
sufficiently large, their intersection identifies
good iterates that are simultaneously near feasible and nearly stationary. Local
regularity then yields a uniform multiplier bound on these iterates. Choosing $R_y$ larger than this bound makes the projection onto $\cY$
inactive at the resulting good iterates. Consequently, the stationarity
guarantee for the truncated minimax problem, together with the near-feasibility
estimate, yields an approximate KKT certificate for~\eqref{eq:problem}.

With this KKT transfer in place, it remains to control the dual
sensitivity term that enters the Lyapunov descent estimate. We consider two regimes. In the dual-regularized regime, we add a
positive dual regularization parameter \(r_y>0\), following the
smoothing-perturbation viewpoint of~\cite{li2026smoothing}. The resulting
dual subproblem is strongly concave and therefore admits a global dual error
bound. Balancing the resulting regularization bias with the descent estimate
yields an  \(\mathcal O(K^{-1/3})\) convergence rate, measured by the KKT
residual of the original constrained problem. The unregularized regime  \(r_y=0\)  is more delicate. The regularization bias
disappears, but the global dual stability supplied by strong concavity is no
longer available. Under additional local structural assumptions, including
piecewise linearity of the outer functions, strict complementarity, and a
piecewise analogue of LICQ at the relevant saddle points, we establish a local dual error bound near the relevant saddle points. This bound is also of independent interest, since standard
smooth nonlinear-programming sensitivity arguments do not apply directly to the
nonsmooth composite dual solution map. Our result is related to
\cite{burke2020strong}, who study piecewise linear-quadratic convex-composite
optimization through generalized equations and establish strong metric
subregularity under similar conditions. Here, we instead use a piecewise linear
epigraphical sensitivity argument to obtain the local dual error bound needed
in our ALM convergence analysis. This bound controls the dual sensitivity term
and yields the sharper \(\mathcal O(K^{-1/2})\) rate.

These results show that nonasymptotic ALM-type guarantees for
nonsmooth nonconvex functional inequality constraints can be obtained under
regularity conditions imposed only on a recovered near-feasible region, rather
than under global multiplier-control assumptions over the entire domain. The
proof separates feasibility recovery, multiplier boundedness,
auxiliary dual truncation, and dual error bounds into distinct steps.
This separation converts stationarity of the compact-dual minimax
reformulation into a KKT certificate for the original constrained problem.

\paragraph{Notation}
Throughout the paper, we use standard notation. For a
positive integer \(d\), let \([d]:=\{1,\ldots,d\}\). For a vector \(\a\in\R^d\), we write
$
(\a_+)_i:=\max\{a_i,0\}$, $i\in[d]$. 
For an index set \(\mathcal I\subseteq[d]\), \(\a_{\mathcal I}\) denotes
the subvector of \(\a\) indexed by \(\mathcal I\). Unless otherwise stated,
\(\|\cdot\|\) denotes the Euclidean norm for vectors and the induced
spectral norm for matrices, \(\|\cdot\|_1\) denotes the vector
\(\ell_1\)-norm. For a matrix \(\bm A\), \(\sigma_{\min}(\bm A)\) denotes
its smallest singular value.
For a set \(\mathcal{C}\), we denote by \(\operatorname{int}(\mathcal{C})\) its interior, 
and
by
$
\dist(\x,\mathcal{C}):=\inf_{\u\in \mathcal{C}}\|\x-\u\|$
the distance from \(\x\) to \(\mathcal{C}\). If \(\mathcal{C}\) is nonempty, closed, and
convex, then
$
\proj_{\mathcal C}(\x):=\argmin_{\u\in \mathcal{C}}\|\u-\x\|$
denotes the projection onto \(\mathcal{C}\). We use
$
\mathbb B_{\varepsilon}(\x):=\{\u:\|\u-\x\|\le \varepsilon\}
$
for the closed Euclidean ball centered at \(\x\) with radius
\(\varepsilon>0\).
For a nonempty closed convex set \(\mathcal{C}\), the normal cone to \(\mathcal{C}\) at \(\x\in \mathcal{C}\) is
$
\mathcal{N}_{\mathcal{C}}(\x):=
\{\bm{v}:\langle \bm{v},\u-\x\rangle\le0,\ \forall \u\in \mathcal{C}\}$.
If \(\x\notin \mathcal{C}\), we set \(\mathcal{N}_{\mathcal{C}}(\x)=\emptyset\). 
For proper closed convex \(\varphi\), \(\partial\varphi\) denotes the convex subdifferential; for convex-composite functions, we use \(\partial(h\circ c)(\x):=\nabla c(\x)\partial h(c(\x))\).

\section{Preliminaries}

In this section, we introduce the problem setup, key concepts, and standing assumptions that will serve as the basis for our subsequent analysis.

\begin{assumption}[Basic problem setup]\label{ass:basic} Problem \eqref{eq:problem} satisfies the following conditions:
\begin{enumerate}[label=\normalfont(\roman*)]
\item 
For each $i\in\{0,\ldots,d\}$, the function $h_i:\R^m\rightarrow \R$  is convex and $L_h$-Lipschitz continuous:
\[
|h_i(\z)-h_i(\z')| \leq L_h\|\z-\z'\| \quad \forall \z,\z'\in \R^m;
\]
and $c_i:\R^n\rightarrow \R^m$ is twice 
continuously differentiable
with $L_{c}$-Lipschitz continuous Jacobian map:
\[
\|\nabla c_i(\x)-\nabla c_i(\x')\| \leq L_c\|\x-\x'\| 
\quad 
\forall \x, \x' \in \mathcal{X}.
\]
\item 
The set $\mathcal X\subseteq\R^n$ is nonempty, convex, and compact. Moreover, there exists $R_{\x}>0$ and $L>0$ such that
$\sum_{i=1}^d \max_{\x,\bar\x\in\mathcal X}|h_i(c_i(\bar\x)+\nabla c_i(\bar\x)^\top(\x-\bar\x))| \leq R_{\x}$ and
\[
|h_i(c_i(\x))-h_i(c_i(\x'))|\leq L\|\x-\x'\| \quad \forall i\in\{0,\ldots,d\},\ \forall \x,\x'\in\mathcal X. 
\]
Without loss of generality, we have $L\ge L_h L_c$.
\end{enumerate}
\end{assumption}

For simplicity, we denote $f(\x):=h_0(c_0(\x))$ and $G(\x):=(h_1(c_1(\x)),\ldots,h_d(c_d(\x)))$. Let $\rho>0$ and we define the smoothed augmented Lagrangian function  $\mathcal{L}_\rho:\R^n\times\R^d\rightarrow \R$ as below:
\begin{equation*}
\mathcal{L}_{\rho}(\x,\y):=f(\x)+\frac{\rho}{2}\sum_{i=1}^d \left[\left(\frac{y_i}{\rho}+G_i(\x)\right)_+^2- \frac{y_i^2}{\rho^2}\right].
\end{equation*}
This is the standard Hestenes--Powell--Rockafellar augmented Lagrangian
associated with the inequality constraint \(G(\x)\le \bz\)
\cite{hestenes1969multiplier,powell1969method,rockafellar1973dual}.

Then, the problem \eqref{eq:problem} is equivalent
to
$
\min_{\x\in\mathcal X}\sup_{\y\in\mathbb R_+^d}
\mathcal L_\rho(\x,\y)$. The minimax representation here is taken over the dual cone $\mathbb R^d_+$. For algorithmic and analytical purposes, we introduce
an ancillary compact truncation of the dual domain $\mathcal{Y}\subseteq \mathbb R^d_+$.

\begin{definition}[Auxiliary dual set]
\label{ass:compact}
The auxiliary dual set \(\mathcal Y\) is a projection-friendly compact
convex subset of \(\mathbb R^d_+\) with \(\bz\in\mathcal Y\). For
simplicity, we take
$\mathcal Y
:=
\{\y\in\mathbb R^d_+:\|\y\|_1\le R_{\y}\}.
$ 
\end{definition}

At this point, Definition \ref{ass:compact} only specifies the shape of the auxiliary dual truncation set. We do not assume a priori that any dual accumulation point or Lagrange multiplier associated with the algorithm lies in $\mathcal Y$. Later, \(R_{\y}\) will be chosen explicitly in terms of problem-dependent constants.

 We now introduce the local regularity condition used in our primal-dual
analysis. It is a local uniform conic multiplier regularity condition for
the linearized composite constraints.

\begin{assumption}
[Local regularity condition]
\label{ass:ucq}
There exist constants \(\delta_{\rm cq}>0\) and \(\bar\sigma>0\) such that the
following condition holds. Let $\mathcal R_{\delta_{\rm cq}} := \{\x\in\mathcal X:\|G(\x)_+\|\le \delta_{\rm cq}\}$. For any \(\x,\bar\x\in\mathcal X\) satisfying \(\x\in \mathcal R_{\delta_{\rm cq}}\) and
\(\|\x-\bar\x\|\le \delta_{\rm cq}\), and for every choice of
$
\s_i\in
\partial h_i(c_i(\bar \x)+\nabla c_i(\bar \x)^\top(\x-\bar \x))$,
$i\in [d]$,
with
$
\g_i:=\nabla c_i(\bar \x)\s_i$, we have
\begin{equation*}
\label{eq:local-cq}
\bar\sigma\|\bm\mu\|
\le
\dist\left(
\mathbf 0,
\sum_{i=1}^d \mu_i\g_i+\mathcal N_{\mathcal X}(\x)
\right),
\qquad
\forall \bm\mu \in\mathbb R_+^d .
\end{equation*}
\end{assumption}

\begin{remark}
For smooth inequality constraints \(G_i(\x)\le 0\), taking
\(h_i(u)=u\) and \(c_i(\x)=G_i(\x)\) gives
\(\partial h_i(u)=\{1\}\), hence \(s_i=1\) and
\(g_i=\nabla G_i(\bar \x)\). In this case, Assumption~\ref{ass:ucq}
controls every nonnegative multiplier vector \(\mu\), uniformly over
near-feasible points and local linearization centers.
This condition plays a role analogous to the global P\L{}/error-bound-type
feasibility regularity assumptions used in ALM-type analyses;
see, e.g., \cite[Assumption A]{bolte2018nonconvex},
\cite[Assumption A5]{alacaoglu2024complexity},
\cite[Assumption 1, H6]{zhu2024first},
\cite[Assumption 3]{li2024stochastic}. Those assumptions are typically
imposed globally over the whole domain or constraint-violation region. In contrast, Assumption~\ref{ass:ucq} is imposed only on the near-feasible
region \(\mathcal R_{\delta_{\rm cq}}\) and only for nearby linearization
centers. 
Thus, while these assumptions are related as feasibility regularity
conditions and are not directly comparable in full generality, our
analysis requires only such local regularity. This locality is sufficient
to establish the required primal--dual estimates on the good iterates; see Section~\ref{subsec:good_iterates}.
\end{remark}

We next define the stationarity measures used in the paper. Before doing so,
we record some  basic properties of the smoothed augmented Lagrangian. The proof
is deferred to Appendix~\ref{proof:lemma_lrho}.

\begin{lemma}\label{lem: weakcvx}
The function \(\mathcal{L}_{\rho}(\cdot, \y)\) is
\(L_{\rho}:=L(1+R_{\y}+\rho R_{\x})\)-weakly convex on
\(\mathcal{X}\) with the nonsmooth composite structure for any
\(\y\in\mathcal{Y}\subseteq \R^d_+\), and
the function \(\mathcal L_\rho(\x,\cdot)\) is concave and
continuously differentiable on \(\mathcal Y\).
\end{lemma}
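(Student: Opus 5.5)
The dual part is elementary, so I will dispose of it first. For fixed $\x$ and each $i$, write $a:=G_i(\x)$. The $i$-th summand is
\[
\phi(y_i)=\frac{\rho}{2}\Bigl[(y_i/\rho+a)_+^2-y_i^2/\rho^2\Bigr].
\]
Its derivative is $(y_i/\rho+a)_+ - y_i/\rho$. This equals $a$ when $y_i/\rho+a\ge0$ and $-y_i/\rho$ otherwise, i.e.
\[
\phi'(y_i)=\max\{a,-y_i/\rho\}.
\]
Being a maximum of a constant and an affine decreasing function, $\phi'$ is continuous and nonincreasing. Hence $\phi$ is $C^1$ and concave. Summing over $i$ and noting that $f(\x)$ does not depend on $\y$ gives concavity and continuous differentiability of $\mathcal L_\rho(\x,\cdot)$ on $\mathcal Y$, indeed on all of $\R^d$.

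For the primal part I will use the standard convex-composite model. For $\bar\x\in\cX$, set $\ell_i(\x;\bar\x):=h_i(c_i(\bar\x)+\nabla c_i(\bar\x)^\top(\x-\bar\x))$. By Lipschitz continuity of $h_i$ and of $\nabla c_i$ (Assumption~\ref{ass:basic}(i)),
\[
|G_i(\x)-\ell_i(\x;\bar\x)|\le \tfrac{L_hL_c}{2}\|\x-\bar\x\|^2\le \tfrac L2\|\x-\bar\x\|^2,
\]
and similarly for $f$ with $\ell_0$. This gives weak convexity of $f$ with modulus $L$. I then rewrite the penalty term as $\frac{\rho}{2}\sum_i(y_i/\rho+G_i)_+^2$ minus a constant in $\x$. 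Set $\psi_i(t):=\frac{\rho}{2}(y_i/\rho+t)_+^2$, which is convex and nondecreasing. I will build the model
\[
m(\x;\bar\x):=\ell_0(\x;\bar\x)+\sum_i\psi_i(\ell_i(\x;\bar\x)),
\]
which is convex in $\x$ because a convex nondecreasing function of a convex function is convex. I will then show that $\mathcal L_\rho(\cdot,\y)-m(\cdot;\bar\x)$ plus a constant is bounded in absolute value by $\frac{L_\rho}{2}\|\x-\bar\x\|^2$. Then $\mathcal L_\rho(\cdot,\y)+\frac{L_\rho}{2}\|\cdot\|^2$ is a pointwise supremum over $\bar\x$ of convex functions that touch it at $\bar\x$, hence convex, which is weak convexity with the composite structure.

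The key estimate is for $\psi_i$, which is not Lipschitz globally. On the relevant range it has derivative $\psi_i'(t)=\rho(y_i/\rho+t)_+\le y_i+\rho|t|$. For $t$ between $G_i(\x)$ and $\ell_i(\x;\bar\x)$, I will bound $|t|$ by $|\ell_i(\x;\bar\x)|+\frac L2\|\x-\bar\x\|^2$, and in the cleanest route just by $|\ell_i|$ and $|G_i|$, both controlled through Assumption~\ref{ass:basic}(ii). Then
\[
|\psi_i(G_i)-\psi_i(\ell_i)|\le \bigl(y_i+\rho\max\{|G_i|,|\ell_i|\}\bigr)\tfrac L2\|\x-\bar\x\|^2 .
\]
Summing over $i$ and using $\sum_iy_i\le R_{\y}$ and $\sum_i\max_{\x,\bar\x}|\ell_i|\le R_{\x}$, the total error is at most $\frac{L}{2}(1+R_{\y}+\rho R_{\x})\|\x-\bar\x\|^2$, which matches $L_\rho$.

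\textbf{Main obstacle.} The delicate point is the $|G_i|$ term: $R_{\x}$ in Assumption~\ref{ass:basic}(ii) bounds only the linearizations, not $G_i$ itself. I plan to handle it by taking $\bar\x=\x$ in that assumption, since $\ell_i(\x;\x)=G_i(\x)$, so $|G_i(\x)|$ is also covered by $R_{\x}$. The mean-value bound then uses $\max\{|G_i|,|\ell_i|\}$, whose sum over $i$ is at most $R_{\x}$ for each $i$-wise pair. If a factor of $2$ appears, I will absorb it via the convention $L\ge L_hL_c$ and the two-sided (upper and lower) model bounds.
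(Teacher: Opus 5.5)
Your proposal is correct and follows essentially the paper's argument. The paper applies the convex-composite weak-convexity rule of Drusvyatskiy--Paquette separately to $f$ and to each penalty term, using the slope bound $L_hM_i(\y)$, where $M_i(\y)$ is a maximum over all pairs $\x,\bar\x\in\mathcal X$ of the linearized constraint; as in your treatment of $|G_i|$, this covers $G_i(\x)$ through $\bar\x=\x$. You instead unpack that rule inline, via the convex model $m(\cdot;\bar\x)$ and a scalar mean-value bound on $\psi_i$ along the segment between $G_i(\x)$ and $\ell_i(\x;\bar\x)$, and you reach the same modulus $L(1+R_{\y}+\rho R_{\x})$.

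Your fallback about a factor of $2$ is not needed, and the convention $L\ge L_hL_c$ could not absorb such a factor anyway. Only the lower bound $\mathcal L_\rho(\x,\y)\ge m(\x;\bar\x)-\frac{1}{2\rho}\|\y\|^2-\frac{L_\rho}{2}\|\x-\bar\x\|^2$ enters the supremum argument, and it gives modulus exactly $L_\rho$.
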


Given $r_{\x},r_{\y}\ge0$, we define the regularized augmented
Lagrangian $F:\R^n\times \R^d\times \R^n\rightarrow\R$ and the dual function $d:\R^d\times \R^n\rightarrow \R$ as
\[
F(\x,\y,\z) := \mathcal{L}_{\rho}(\x,\y)+\frac{r_{\x}}{2}\|\x-\z\|^2-\frac{r_{\y}}{2}\|\y\|^2\quad \text{and}\quad d(\y,\z):=\min_{\x\in\mathcal{X}} F(\x,\y,\z),
\]
where we assume
$r_{\x}> L_\rho$. This regularized potential follows the standard smoothing/regularization device used in nonconvex minimax analysis, where the primal proximal term stabilizes the minimization over $\x$, while the negative quadratic term in $\y$ improves the concavity and regularity of the dual side; see, e.g., \cite{zhang2020single,li2025nonsmooth,li2026smoothing}. Here, \(F(\cdot,\y,\z)\) is strongly convex on \(\mathcal X\) for
 every \((\y,\z)\in\mathcal Y\times\mathbb R^n\). Hence, we can use the following stationarity measure. 
\begin{definition}
\label{defi:primal-dual}
The pair $(\x,\y)\in\mathcal{X}\times \mathcal{Y}$ is an $\epsilon$-KKT point if 
\[
\|\nabla_{\z}d(\y,\x)\|\le \epsilon,
\quad \text{and} \quad 
\dist\left(
\bz,
-\nabla_{\y}\mathcal L_\rho(\x,\y)
+
\mathcal{N}_{\mathbb R^d_+}(\y)\right)
\le \epsilon.
\]
\end{definition}

\begin{remark}\label{rmk:kkt-recovery}
 Although the pair \((\x,\y)\) is evaluated over \(\mathcal X\times\mathcal Y\), the dual stationarity residual is defined with respect to the original dual cone \(\R^d_+\), rather than the auxiliary truncation set \(\mathcal Y\). The first condition in Definition~\ref{defi:primal-dual} is a composite primal stationarity measure in the spirit of \cite[Definition~1]{li2025nonsmooth}.
The second condition gives $\nabla_{\y}\mathcal L_{\rho}(\x,\y)\in \mathcal N_{\R^d_+}(\y)$, when $\epsilon=0$.  This is equivalent componentwise to \[ G_i(\x)\le 0,\qquad y_i\ge 0,\qquad y_iG_i(\x)=0, \qquad \forall i\in[d]. \] Thus the second condition recovers primal feasibility, dual feasibility, and complementary slackness. Together with the primal stationarity condition, Definition~\ref{defi:primal-dual} gives the full KKT system of problem~\eqref{eq:problem} when $\epsilon=0$.
\end{remark}

In the remaining part of the paper, we always assume that Assumption \ref{ass:basic} and \ref{ass:ucq} hold, and $\mathcal{Y}$ is given by Definition \ref{ass:compact}.

\section{Smoothed Prox-Linear Augmented Lagrangian Method}
\label{sec:main}
In this section, we present the smoothed prox-linear
augmented Lagrangian method. 
The composite functions
\(h_i\circ c_i\), \(i=0,1,\ldots,d\), are generally nonsmooth and
nonconvex. Hence their gradients may not be available. We therefore
linearize only the smooth inner maps \(c_i\) while keeping the convex
outer functions \(h_i\) as in the prox-linear scheme.
For each \(k\ge0\), define the linearized inner maps
$
c_i^k(\x)
:=
c_i(\x^k)+\nabla c_i(\x^k)^\top(\x-\x^k),
$ for all \(i=0,1,\ldots,d\). 
This leads to the following prox-linear primal
update:
\begin{equation*}
    \x^{k+1}
    =
    \mathop{\argmin}_{\x\in\mathcal{X}}
    \left\{
    F_{\x^k,\lambda}(\x,\y^k)
    +\frac{r_{\x}}{2}\|\x-\z^{k}\|^2
    \right\},
\end{equation*}
where
\begin{equation}
\label{eq:prox_linear_model}
\begin{aligned}
F_{\x^k,\lambda}(\x,\y^k)
:=
&\ h_0(c_0^k(\x))
+\frac{1}{2\lambda}\|\x-\x^k\|^2+\frac{\rho}{2}\sum_{i=1}^d
\left[
\left(
\frac{y^k_i}{\rho}
+h_i(c_i^k(\x))
\right)_+^2
-
\frac{(y^k_i)^2}{\rho^2}
\right].
\end{aligned}
\end{equation}
Here, \(F_{\x^k,\lambda}(\cdot,\y^k)\) is a convex prox-linear model of
\(\mathcal L_\rho(\cdot,\y^k)\) at \(\x^k\). 
For the dual update, we use the closed-form expression
\[
\nabla_{\y}F(\x,\y,\z)
=
\nabla_{\y}\mathcal L_\rho(\x,\y)-r_{\y}\y
=
\max\left\{G(\x),-\frac{\y}{\rho}\right\}
-r_{\y}\y,
\]
where the maximum is taken componentwise. Thus, the dual variable is
updated by a projected gradient ascent step over the ancillary set
\(\mathcal Y\), and the auxiliary variable \(\z\) is updated by a
standard gradient descent step on $F(\x^{k+1},\y^{k+1},\cdot)$. The full method is presented in
Algorithm~\ref{alg}.

\begin{algorithm}[H]
  \caption{Smoothed Prox-Linear ALM}
  \label{alg}
  \SetKwInOut{Input}{Input}
    \Input{Initial point
    \(\x^0\in\{\x\in\mathcal{X}:h_i(c_i(\x))\leq 0,\  \forall i\in[d]\}\),
    \(\y^0=\bz\), \(\z^0=\x^0\), and parameters
    \(\rho>0\), \(r_{\x}>L_\rho\), \(r_{\y}\ge0\),
    \(\lambda>0\), \(\alpha>0\), \(\beta\in(0,1)\).}
    \For{\(k=0,1,\ldots\)}{
    \(\x^{k+1}:=
    \mathop{\argmin}_{\x \in \X}
    \left\{
    F_{\x^k,\lambda}(\x,\y^k)
    +\frac{r_{\x}}{2}\|\x-\z^{k}\|^2
    \right\}\)\\
    \(\y^{k+1}:=
    \proj_{\mathcal{Y}}
    \left(
    \y^k+\alpha
    \left(
    \max\left\{G(\x^{k+1}),-\frac{\y^k}{\rho}\right\}
    -r_{\y}\y^k
    \right)
    \right)\) \\
    \(\z^{k+1}:=\z^k+\beta(\x^{k+1}-\z^k)\)}
\end{algorithm}

\begin{remark}[Primal subproblem]
The primal step in Algorithm~\ref{alg} is a strongly convex
convex-composite subproblem: The inner mappings are affine in the model, while
the outer functions \(h_i\) remain convex. Hence, when projection onto
\(\mathcal X\) and the proximal or epigraphical operations associated with
\(h_i\) are tractable, this subproblem can be solved by standard convex
splitting methods, such as ADMM~\cite{boyd2011distributed} or primal--dual
splitting, after introducing auxiliary variables \(u_i=c_i^k(\x)\).
For clarity, our convergence analysis assumes exact solutions of the primal
subproblems. An inexact extension would require carrying the additional error terms through the Lyapunov descent and KKT-transfer estimates.
Under standard summable-error or relative-error criteria, in the spirit of
inexact proximal-point methods~\cite{rockafellar1976monotone} and prox-linear
frameworks~\cite{drusvyatskiy2019efficiency}, such an extension should be
possible, but we do not pursue it here.
\end{remark}

\section{Main Results and Proof Overview}
\label{sec:roadmap}
We first provide a roadmap of the convergence analysis and state the main technical results leading to the iteration complexity guarantee for Algorithm~\ref{alg}. The detailed proofs are deferred to the subsequent sections and appendices. Our starting point is to view Algorithm~\ref{alg} as a nonconvex minimax method over the bounded domain \(\mathcal X\times\mathcal Y\), where \(\mathcal Y\) is the auxiliary dual truncation set introduced in Definition~\ref{ass:compact}. Once \(\mathcal Y\) is fixed and compact, the basic Lyapunov descent mechanism follows the standard primal descent, dual ascent, and proximal descent framework for nonconvex-concave minimax optimization \cite{zhang2020single,li2025nonsmooth,li2026smoothing}. 
For completeness, we recall the corresponding bounded-domain descent estimate
specialized to the present prox-linear augmented Lagrangian model.

To streamline the statements, we introduce several auxiliary value functions
and solution maps. For \((\y,\z)\in\mathcal Y\times\mathcal X\), define
\[
d(\y,\z)
:=
\min_{\x\in\mathcal X}F(\x,\y,\z),
\quad
\text{and} \quad \x(\y,\z)
:=
\operatorname*{argmin}_{\x\in\mathcal X}F(\x,\y,\z).
\]
The minimizer \(\x(\y,\z)\) is unique because
\(F(\cdot,\y,\z)\) is \((r_{\x}-L_\rho)\)-strongly convex. Next, define
\[
p(\z)
:=
\max_{\y\in\mathcal Y}d(\y,\z),
\quad
\text{and} \quad
\mathcal Y(\z)
:=
\operatorname*{argmax}_{\y\in\mathcal Y}d(\y,\z).
\]
Given any selection  \(\y(\z)\in\mathcal Y(\z)\), we have 
$
\x^\star(\z)=\x(\y(\z),\z)$ for the corresponding primal minimizer.
We also define the projected dual ascent map
\[
\y_+(\z)
:=
\proj_{\mathcal Y}
\left(\y+\alpha\nabla_{\y}d(\y,\z)\right).
\]
With this notation, define the Lyapunov function $\Phi:\R^n\times\R^d \times\R^n \rightarrow\R$ as:
\[
\Phi(\x,\y,\z):= \underbrace{F(\x,\y,\z)-d(\y,\z)}_{\textnormal{Primal Descent}} + \underbrace{p(\z)-d(\y,\z)}_{\textnormal{Dual Ascent}} + \underbrace{p(\z)}_{\textnormal{Proximal Descent}}. 
\]
The three terms correspond respectively to the primal descent step, the projected dual ascent step, and the proximal descent of the auxiliary variable \(\z\).

Let \(L_G>0\) be a Lipschitz constant of the constraint mapping \(G\) on
\(\mathcal X\), i.e.,
\begin{equation*}
\|G(\x)-G(\x')\|
\le
L_G\|\x-\x'\|,
\qquad
\forall \x,\x'\in\mathcal X .
\end{equation*}
Under the coordinatewise Lipschitz bound in Assumption~\ref{ass:basic},
one may take \(L_G=\sqrt d L\). For later use, define
\begin{equation}\label{eq:constants}
\sigma_1
:=
\frac{r_{\x}}{r_{\x}-L_\rho},
\qquad
\sigma_2
:=
\frac{L_G}{r_{\x}-L_\rho},
\quad \text{and} \quad
L_d
:=
L_G\sigma_2+r_{\y}+\frac{1}{\rho}.
\end{equation}

The first building block is a standard basic Lyapunov descent
estimate. Its proof follows the standard nonconvex-concave minimax Lyapunov analysis;
we include the proof in
Appendix~\ref{sec:suff_decrease} to make the constants explicit.
\begin{proposition}[Basic descent of \(\Phi\)]
\label{prop:decrease}
Suppose that 
$
r_{\x}\ge \max\{3L_\rho, L_\rho+2L_G\}$. 
Let 
\[
0<\lambda\le \frac{1}{L_G}, \,  0<\alpha
\le
\min\left\{
\frac{1}{2(L_G\zeta^2+L_d)-r_{\y}},
\frac{1}{8L_G\zeta^2}
\right\},\, 0<\beta
\le
\frac{1}{28}
\min\left\{
1,\frac{2}{\alpha r_{\x}\sigma_2^2}
\right\},\]
where $
\zeta
:=
\left(
1+\frac{2(\lambda^{-1}+L_\rho)}{r_{\x}-L_\rho}
\right)
\left(
\sqrt{\frac{2L_\rho}{\lambda^{-1}+L_\rho}}+1
\right)$. 
Then for any \(k\ge0\), we have
\begin{equation*}
\begin{aligned}
\Phi^k-\Phi^{k+1}
\ge\ &
\frac{7}{16\lambda}\|\x^{k}-\x^{k+1}\|^{2}
+\frac{1}{8\alpha}\|\y^{k}-\y_{+}^{k}(\z^{k})\|^2
+\frac{4r_{\x}\beta}{7}\|\z^{k}-\x^{k+1}\|^{2} \\
&-
28r_{\x}\beta
\|\x(\y(\z^{k}),\z^{k})
-\x(\y_+^{k}(\z^k), \z^{k})\|^{2},
\end{aligned}
\end{equation*}
where \(\Phi^k:=\Phi(\x^k,\y^k,\z^k)\).
\end{proposition}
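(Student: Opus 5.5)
We follow the three-part Lyapunov template of \cite{zhang2020single,li2025nonsmooth}. We bound separately the one-step changes of the three pieces $F-d$, $p-d$ and $p$ of $\Phi$, and then sum them. Write $\Phi^k-\Phi^{k+1}=[F^k-F^{k+1}]-2[d(\y^k,\z^k)-d(\y^{k+1},\z^{k+1})]+2[p(\z^k)-p(\z^{k+1})]$, where $F^k:=F(\x^k,\y^k,\z^k)$. Each bracket is further split according to the order of the updates $\x\to\y\to\z$.

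\textbf{Step 1 (primal descent).} We need $F(\x^k,\y^k,\z^k)-F(\x^{k+1},\y^k,\z^k)\gtrsim \tfrac{1}{\lambda}\|\x^k-\x^{k+1}\|^2$. Three facts give this. First, the prox-linear model satisfies $F_{\x^k,\lambda}(\x^k,\y^k)=\mathcal L_\rho(\x^k,\y^k)$. Second, it upper-bounds $\mathcal L_\rho(\cdot,\y^k)$ up to the quadratic error $\tfrac{L_\rho}{2}\|\x-\x^k\|^2$, which follows from the Taylor remainder of $c_i$, the Lipschitz continuity and monotonicity of $h_i$, and the bounds $R_{\x}$, $R_{\y}$ already used in Lemma~\ref{lem: weakcvx}. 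Third, the subproblem is $(\lambda^{-1}+r_{\x})$-strongly convex, so comparing with $\x=\x^k$ gives a $\tfrac{1}{2\lambda}$-type decrease. Since $\lambda\le 1/L_G$ and $r_{\x}\ge 3L_\rho$, the model error absorbs only a fraction of this, and the coefficient $7/(16\lambda)$ comes from this bookkeeping. The remaining changes in $\y$ and $\z$ are handled by the exact identities $F(\x^{k+1},\y^{k+1},\z^k)-F(\x^{k+1},\y^k,\z^k)$ (concave and smooth in $\y$) and $F(\cdot,\cdot,\z^k)-F(\cdot,\cdot,\z^{k+1})=\tfrac{r_{\x}}{2}\langle \z^{k+1}-\z^k,\ \z^{k+1}+\z^k-2\x^{k+1}\rangle$.

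\textbf{Step 2 (dual function).} We use Danskin-type formulas, $\nabla_{\y}d(\y,\z)=\nabla_{\y}F(\x(\y,\z),\y,\z)$ and $\nabla_{\z}d=r_{\x}(\z-\x(\y,\z))$, together with Lipschitz estimates for the solution map: $\|\x(\y,\z)-\x(\y',\z)\|\le\sigma_2\|\y-\y'\|$ and $\|\x(\y,\z)-\x(\y,\z')\|\le\sigma_1\|\z-\z'\|$. Both follow from strong convexity with modulus $r_{\x}-L_\rho$ and the $L_G$-Lipschitz continuity of $\nabla_{\y}F$ in $\x$. Consequently $\nabla_{\y}d$ is $L_d$-Lipschitz. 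The ascent lemma then lower-bounds $d(\y^{k+1},\z^k)-d(\y^k,\z^k)$. The algorithm uses $\nabla_{\y}F(\x^{k+1},\y^k)$ rather than $\nabla_{\y}d(\y^k,\z^k)$, and the difference is controlled by $L_G\|\x^{k+1}-\x(\y^k,\z^k)\|$. A primal error bound, namely that the exact prox-linear step is close to the exact proximal minimizer, gives $\|\x^{k+1}-\x(\y^k,\z^k)\|\le\zeta\|\x^k-\x^{k+1}\|$ with the stated $\zeta$. Obtaining this constant is the only genuinely new (prox-linear) ingredient, and I expect it to be the main technical obstacle. I would derive it by comparing the optimality conditions of the two strongly convex problems, inserting $\x^k$ in between, and using the model error $\sqrt{2L_\rho/(\lambda^{-1}+L_\rho)}$. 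The step-size conditions $\alpha\le 1/(2(L_G\zeta^2+L_d)-r_{\y})$ and $\alpha\le 1/(8L_G\zeta^2)$ are designed exactly so that these cross terms are absorbed by $\tfrac{1}{8\alpha}\|\y^k-\y^k_+(\z^k)\|^2$ and a small part of the $\x$-decrease. Finally, the projection inequality lets us replace $\y^{k+1}-\y^k$ by $\y^k_+(\z^k)-\y^k$ up to these error terms.

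\textbf{Step 3 (proximal descent).} We bound $p(\z^{k+1})-p(\z^k)$ from above by $\langle r_{\x}(\z^k-\x^\star(\z^k)),\z^{k+1}-\z^k\rangle$ plus a quadratic term, using the weak convexity and concavity structure of $p$. We also bound $d(\y^{k+1},\z^{k+1})-d(\y^{k+1},\z^k)$ from below by its linearization. The inner-product terms combine into $r_{\x}\beta\langle \x(\y^{k+1},\z^k)-\x^\star(\z^k),\ \x^{k+1}-\z^k\rangle$, and Young's inequality turns this into $\tfrac{4r_{\x}\beta}{7}\|\z^k-\x^{k+1}\|^2$ minus $28r_{\x}\beta\|\x(\y(\z^k),\z^k)-\x(\y^k_+,\z^k)\|^2$. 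The residual mismatch $\|\x(\y^{k+1},\z^k)-\x(\y^k_+,\z^k)\|\le\sigma_2\|\y^{k+1}-\y^k_+\|$ is absorbed through the conditions $\beta\le 1/28$ and $\beta\le 1/(14\alpha r_{\x}\sigma_2^2)$. Summing Steps 1 to 3 with these absorptions yields the stated inequality.
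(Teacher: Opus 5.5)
Your Steps 1 and 2 follow the paper's structure: Lemma~\ref{lemma-F-des}, the $\y$-part of Lemma~\ref{lemma-d-aes}, the gradient-mismatch term controlled by the primal error bound of Lemma~\ref{prop:lip}, and the replacement of $\y^{k+1}-\y^k$ by $\y_+^k(\z^k)-\y^k$ at a cost of $\eta^2\|\x^k-\x^{k+1}\|^2$, where $\eta=\alpha L_G\zeta$. The paper imports Lemma~\ref{prop:lip} from \cite{li2025nonsmooth} rather than proving it, so it is not where the work lies here.

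Your Step 3 uses a genuinely different bookkeeping. The paper lower-bounds $p(\z^k)-p(\z^{k+1})$ by testing with $\y(\z^{k+1})$ and $\x(\y(\z^{k+1}),\z^k)$ (Lemma~\ref{lemma-p-des}). It bounds the $\z$-change of $d(\y^{k+1},\cdot)$ through the new point $\x(\y^{k+1},\z^{k+1})$. Its cross term therefore compares points at different centers and different dual points. That forces a four-term split through \eqref{lip-starz}, \eqref{lip-z} and \eqref{lip-y}, which is where $28=4\cdot 7$ and the $56r_{\x}\beta\sigma_1^2$ term come from.

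You linearize everything at $\z^k$ instead. The bound $p(\z^{k+1})\le p(\z^k)+\langle r_{\x}(\z^k-\x^\star(\z^k)),\z^{k+1}-\z^k\rangle+\frac{r_{\x}}{2}\|\z^{k+1}-\z^k\|^2$ is valid: $p(\z)=\min_{\x\in\mathcal X}\max_{\y\in\mathcal Y}F(\x,\y,\z)$ by Sion's theorem, and $\x^\star(\z^k)$ is the outer minimizer. For $d(\y^{k+1},\cdot)$, which is not convex, you need a linearization lower bound with a loss, e.g.\ $r_{\x}(\sigma_1-\frac12)\|\z^{k+1}-\z^k\|^2$. The cross term then sits at the single center $\z^k$, namely $2r_{\x}\beta\langle\x^{k+1}-\z^k,\x^\star(\z^k)-\x(\y^{k+1},\z^k)\rangle$, and a two-term split through $\y_+^k(\z^k)$ suffices. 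I checked that this closes under the stated parameter conditions, even with $7r_{\x}\beta$ in place of $28r_{\x}\beta$. The price is the minimax interchange for $p$. Lemma~\ref{lemma-p-des} uses only $p(\z^k)\ge d(\y(\z^{k+1}),\z^k)$ and pays with \eqref{lip-starz} instead.

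Two slips must be fixed to get the stated constants.

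First, your $\z$-identity has the wrong sign. The correct identity is $F(\x,\y,\z^k)-F(\x,\y,\z^{k+1})=\frac{r_{\x}}{2}\langle\z^{k+1}-\z^k,2\x-\z^k-\z^{k+1}\rangle$. At $\x=\x^{k+1}$ this equals $\frac{(2-\beta)r_{\x}}{2\beta}\|\z^{k+1}-\z^k\|^2\ge0$; you wrote its negative. This term is the only source of the positive $\frac{4r_{\x}\beta}{7}\|\z^k-\x^{k+1}\|^2$, since Young's inequality on the cross term cannot create it. Correctly combined, the $\z$-terms are $r_{\x}\beta\|\x^{k+1}-\z^k\|^2+2r_{\x}\beta\langle\x^{k+1}-\z^k,\x^\star(\z^k)-\x(\y^{k+1},\z^k)\rangle-r_{\x}(2\sigma_1+\frac12)\beta^2\|\x^{k+1}-\z^k\|^2$. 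They are not a bare $r_{\x}\beta\langle\cdot,\cdot\rangle$.

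Second, Step 1 undercounts the $\x$-decrease. Do not bound the model by $\mathcal L_\rho+\frac{L_\rho}{2}\|\cdot-\x^k\|^2$. Keep its proximal part: $F_{\x^k,\lambda}(\x,\y)\ge\mathcal L_\rho(\x,\y)+\frac{\lambda^{-1}-L_\rho}{2}\|\x-\x^k\|^2$ (Fact~\ref{fact-2}). Together with $(\lambda^{-1}+r_{\x})$-strong convexity this gives the coefficient $\lambda^{-1}+\frac{r_{\x}-L_\rho}{2}$, not a $\frac{1}{2\lambda}$-type one.

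The parameter conditions also play different roles from the ones you assign:
\begin{itemize}
\item $r_{\x}\ge L_\rho+2L_G$ lets the $r_{\x}$-part absorb $L_\rho$ and the $L_G\|\x^k-\x^{k+1}\|^2$ produced by the gradient-mismatch term.
\item $\lambda\le 1/L_G$, together with $\alpha\le 1/(8L_G\zeta^2)$, caps the losses $\eta^2/\alpha=\alpha L_G^2\zeta^2$ at $\frac{1}{8\lambda}$.
\item $r_{\x}\ge 3L_\rho$ gives $\sigma_1\le\frac32$ for the $\z$-terms.
\end{itemize}
With the full $\lambda^{-1}$ there is ample room for $\frac{7}{16\lambda}$; as you describe it, your bookkeeping does not justify that constant.
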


The bounded-domain descent estimate in Proposition~\ref{prop:decrease}
leaves two major technical questions. The first question is how to absorb
the negative sensitivity term
\[
\|\x(\y(\z^k),\z^k)-\x(\y_+^k(\z^k),\z^k)\|^2 .
\]
This requires a dual error bound that controls the above primal
perturbation by the projected dual residual
\(\|\y^k-\y_+^k(\z^k)\|^2\).
The treatment of this error bound depends on the dual regularization
parameter \(r_{\y}\). When \(r_{\y}>0\), the function
\(F(\x,\cdot,\z)\) is strongly concave, and hence the
required dual error bound follows directly from strong concavity.
This yields a
bounded-domain convergence guarantee without additional local regularity
conditions. However, the regularization also introduces the bias term
\(r_{\y}\y\) in the final KKT residual for the original constrained
problem. Balancing this bias with the descent estimate leads to the
\(\mathcal O(K^{-1/3})\) rate.
To obtain the sharper \(\mathcal O(K^{-1/2})\) rate, we set
\(r_{\y}=0\), which removes the regularization bias. In this case, however, the strong concavity of the dual
problem is lost, and a global dual error bound is no longer automatic.  We therefore
establish a new local dual error bound under the pointwise LICQ
condition, strict complementarity
condition, and the piecewise
linearity condition in Assumption~\ref{ass:struc-regularity}. This local error bound
serves as a substitute for the global dual error bound available in the
regularized regime and allows us to absorb the dual sensitivity term without
dual regularization.

The second bottleneck is the main point at which constrained optimization
differs from standard bounded-domain minimax optimization. The descent
analysis above yields residuals for the artificially truncated minimax
problem over \(\mathcal X\times\mathcal Y\). For the original constrained
problem, however, one must additionally recover feasibility,
complementarity, and stationarity with respect to the true dual cone
\(\R^d_+\). Thus the key technical difficulty is to identify sufficiently
many good iterations at which the primal point is nearly feasible and the
artificial dual truncation is inactive.
This is where the localized nature of our constraint qualification becomes
essential. Instead of imposing a global feasibility regularity condition over the whole constraint violation region, Assumption~\ref{ass:ucq} imposes a local multiplier-side regularity condition only on the near-feasible region
$
\mathcal R_{\delta_{\rm cq}}
=
\{\x\in\mathcal X:\|G(\x)_+\|\le\delta_{\rm cq}\}.
$
The penalty parameter \(\rho\) is used to ensure that sufficiently many
iterates enter this region. Once a good iterate lies in
\(\mathcal R_{\delta_{\rm cq}}\), the  primal optimality
condition, together with Assumption~\ref{ass:ucq}, yields a bound on the associated dual vector.  We then choose \(R_{\y}\) explicitly so that
this bound lies strictly below the artificial radius constraint
\(\|\y\|_1\le R_{\y}\). Hence the radius constraint in \(\mathcal Y\) is
inactive at the selected good indices, and the normal cone contribution
from \(\mathcal Y\) reduces to that of the true dual cone \(\R^d_+\).
Consequently, the bounded-domain algorithmic residuals can be converted
into KKT residuals for problem~\eqref{eq:problem}.

\subsection{Dual Error Bounds and Sufficient Decrease}

We now address the first technical question identified above: absorbing the negative sensitivity term in Proposition~\ref{prop:decrease}. This is done through dual error bounds that control $ \|\x(\y(\z^k),\z^k)-\x(\y_+^k(\z^k),\z^k)\|$ by the projected dual residual \(\|\y^k-\y_+^k(\z^k)\|\). We state two such bounds: a global dual error bound for the regularized case \(r_{\y}>0\), and a local dual error bound for the unregularized case \(r_{\y}=0\).

\begin{proposition}[Global  dual error bound] 
\label{prop:dual_eb_KL}
Let \(r_{\y}>0\). Then for any
\((\y,\z)\in\mathcal{Y}\times \R^{n}\),
\begin{equation}\label{eq:dualeb1}
\|\x(\y_+(\z),\z)-\x(\y(\z),\z)\|
\leq
\omega_1\|\y_+(\z)-\y\|,
\end{equation}
where $\omega_1:=\frac{1+\alpha L_d}{\alpha\sqrt{r_{\y}(r_{\x}-L_\rho)}}$.
\end{proposition}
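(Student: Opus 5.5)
Since $r_{\y}>0$, the dual function $d(\cdot,\z)$ is $r_{\y}$-strongly concave on $\mathcal Y$: $F(\x,\cdot,\z)$ is concave by Lemma~\ref{lem: weakcvx}, the term $-\frac{r_{\y}}{2}\|\y\|^2$ makes it $r_{\y}$-strongly concave, and a pointwise minimum over $\x$ preserves this. Hence $\mathcal Y(\z)=\{\y(\z)\}$ is a singleton. The plan is a chain of three inequalities:
\begin{enumerate}[label=\normalfont(\arabic*)]
\item a primal perturbation bound controlling $\|\x(\y',\z)-\x(\y,\z)\|$ by $\|\y'-\y\|$;
\item a quadratic-growth (error-bound) estimate for $d(\cdot,\z)$ controlling $\|\y_+-\y(\z)\|$;
\item a projected-gradient argument converting that estimate into the residual $\|\y_+-\y\|$.
\end{enumerate}

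\textbf{Step (1).} I want a bound of the form $\|\x(\y',\z)-\x(\y,\z)\|^2\le \frac{1}{r_{\x}-L_\rho}\,\langle \y-\y',\nabla_{\y}d(\y',\z)-\nabla_{\y}d(\y,\z)\rangle$ plus a regularization correction, rather than the cruder Lipschitz bound $\|\x(\y',\z)-\x(\y,\z)\|\le\sigma_2\|\y'-\y\|$. Write $\x=\x(\y,\z)$ and $\x'=\x(\y',\z)$. Add the two $(r_{\x}-L_\rho)$-strong convexity inequalities
\[
F(\x',\y,\z)\ge F(\x,\y,\z)+\tfrac{r_{\x}-L_\rho}{2}\|\x'-\x\|^2
\quad\text{and}\quad
F(\x,\y',\z)\ge F(\x',\y',\z)+\tfrac{r_{\x}-L_\rho}{2}\|\x'-\x\|^2 .
\]
Then use concavity of $\mathcal L_\rho(\x,\cdot)$ together with Danskin's formula $\nabla_{\y}d(\y,\z)=\nabla_{\y}\mathcal L_\rho(\x(\y,\z),\y)-r_{\y}\y$. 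This gives
\[
(r_{\x}-L_\rho)\|\x'-\x\|^2\le \langle \nabla_{\y}d(\y',\z)-\nabla_{\y}d(\y,\z),\,\y-\y'\rangle-r_{\y}\|\y-\y'\|^2 + (\text{terms that cancel}).
\]
Taking $\y'=\y(\z)$ and applying Cauchy--Schwarz should yield
\[
(r_{\x}-L_\rho)\|\x(\y(\z),\z)-\x(\y_+,\z)\|^2\le C\,\|\y(\z)-\y_+\|\cdot \dist\bigl(\bz,-\nabla_{\y}d(\y_+,\z)+\mathcal N_{\mathcal Y}(\y_+)\bigr).
\]
Here I use that $\nabla_{\y}d(\y(\z),\z)\in\mathcal N_{\mathcal Y}(\y(\z))$ at the maximizer, so its inner product with $\y_+-\y(\z)$ has a favorable sign.

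\textbf{Steps (2) and (3).} Strong concavity gives the standard error bound
\[
r_{\y}\|\y_+-\y(\z)\|\le \dist\bigl(\bz,-\nabla_{\y}d(\y_+,\z)+\mathcal N_{\mathcal Y}(\y_+)\bigr).
\]
For the projected step, the optimality condition of the projection defining $\y_+$ gives $\frac{1}{\alpha}(\y-\y_+)+\nabla_{\y}d(\y,\z)\in\mathcal N_{\mathcal Y}(\y_+)$. Combining this with the $L_d$-Lipschitz continuity of $\nabla_{\y}d(\cdot,\z)$ (the Lipschitz property yields the constant $L_d=L_G\sigma_2+r_{\y}+1/\rho$ through $\sigma_2$) gives
\[
\dist\bigl(\bz,-\nabla_{\y}d(\y_+,\z)+\mathcal N_{\mathcal Y}(\y_+)\bigr)\le\Bigl(\tfrac1\alpha+L_d\Bigr)\|\y_+-\y\|=\tfrac{1+\alpha L_d}{\alpha}\|\y_+-\y\|.
\]
Chaining the three steps: Step (1) gives $(r_{\x}-L_\rho)\|\Delta\x\|^2\lesssim \|\y_+-\y(\z)\|\cdot D$, where $D$ is the distance above. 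Step (2) gives $\|\y_+-\y(\z)\|\le D/r_{\y}$. Hence $\|\Delta\x\|^2\le D^2/(r_{\y}(r_{\x}-L_\rho))$, and Step (3) yields exactly $\omega_1=\frac{1+\alpha L_d}{\alpha\sqrt{r_{\y}(r_{\x}-L_\rho)}}$. This matching constant confirms the intended route.

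\textbf{Main obstacle.} The delicate step is Step (1), specifically obtaining the monotonicity-type inequality with constant exactly $1$ so that no extra factor appears. The cross terms must cancel exactly. The difference in $F$ across $\y$ involves $\mathcal L_\rho(\x,\y')-\mathcal L_\rho(\x,\y)$. I would bound it by concavity in $\y$, in the direction that leaves $\nabla_{\y}\mathcal L_\rho$ evaluated at the respective minimizers, i.e.\ the gradients of $d$. The first thing I would try is to drop $-r_{\y}\|\y-\y'\|^2$ and the concavity gaps as nonpositive. I would also have to check carefully that $\mathcal L_\rho(\x,\cdot)$ is differentiable everywhere, which is needed for Danskin's formula; this holds by Lemma~\ref{lem: weakcvx} and the uniqueness of $\x(\y,\z)$.
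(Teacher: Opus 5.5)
Your proof is correct and gives exactly $\omega_1$, but it takes a different route from the paper.

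The paper's proof is a one-line reduction. Since $r_{\y}>0$, $F(\x,\cdot,\z)$ is $r_{\y}$-strongly concave and hence satisfies a KL inequality with exponent $\tfrac12$ and $\mu=\sqrt{2r_{\y}}$. The bound is then read off from the general KL-based dual error bound \cite[Proposition~3]{li2025nonsmooth}. A KL argument of this kind works with function values. Write $\Delta\x:=\x(\y_+(\z),\z)-\x(\y(\z),\z)$ and $D:=\dist(\bz,-\nabla_{\y}d(\y_+(\z),\z)+\mathcal N_{\mathcal Y}(\y_+(\z)))$. Strong convexity of $F(\cdot,\y(\z),\z)$ bounds $\tfrac{r_{\x}-L_\rho}{2}\|\Delta\x\|^2$ by the gap $\max_{\y'\in\mathcal Y}F(\x(\y_+(\z),\z),\y',\z)-d(\y_+(\z),\z)$. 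The KL (PL) inequality then bounds that gap by $D^2/(2r_{\y})$.

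You argue with first-order information instead. You add the two strong-convexity inequalities, as in the proof of \eqref{lip-y} in Lemma~\ref{lemma-sollip}, but keep the inner product rather than bounding it by $L_G\|\cdot\|$. Together with concavity of $\mathcal L_\rho(\x,\cdot)$, this gives exactly $(r_{\x}-L_\rho)\|\x(\y,\z)-\x(\y',\z)\|^2\le\langle\nabla_{\y}d(\y',\z)-\nabla_{\y}d(\y,\z),\y-\y'\rangle-r_{\y}\|\y-\y'\|^2$, with no leftover terms to cancel.

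Now take $\y'=\y(\z)$ and $\y=\y_+(\z)$. Your sign argument at $\y(\z)$ handles one term. You also need that every $\bm v\in\mathcal N_{\mathcal Y}(\y_+(\z))$ satisfies $\langle\bm v,\y_+(\z)-\y(\z)\rangle\ge0$; state this explicitly, since it is what lets Cauchy--Schwarz produce $D$. Together these give $(r_{\x}-L_\rho)\|\Delta\x\|^2\le Dt-r_{\y}t^2$ with $t:=\|\y_+(\z)-\y(\z)\|$. Steps (2)--(3) are then correct as written.

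The paper's route buys brevity and generality: the cited result handles arbitrary KL exponents and yields H\"older-type bounds. Yours is self-contained and uses only Lemma~\ref{lemma-dualdiff} and first-order optimality. It is also slightly sharper. If you keep $-r_{\y}t^2$ and maximize $Dt-r_{\y}t^2$ over $t\ge0$, you get $(r_{\x}-L_\rho)\|\Delta\x\|^2\le D^2/(4r_{\y})$. That is the bound with constant $\omega_1/2$, and Step (2) is no longer needed.
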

\begin{proof}
Fix \(\z\). Since \(\mathcal L_\rho(\cdot,\y)\) is \(L_\rho\)-weakly
convex, \(F(\cdot,\y,\z)\) is \((r_{\x}-L_\rho)\)-strongly convex. Since
\(r_{\y}>0\), \(F(\x,\cdot,\z)\) is \(r_{\y}\)-strongly concave, and
therefore the dual problem satisfies the KL inequality with
exponent \(\frac{1}{2}\).
Applying
\cite[Proposition~3]{li2025nonsmooth} with $\theta=\frac12$, $\mu=\sqrt{2r_{\y}}$
yields \eqref{eq:dualeb1}.
\end{proof}

The case \(r_{\y}=0\) requires a local argument. We use the following
additional assumptions only for the local dual error bound established in Proposition \ref{prop:dual_eb_local}.

\begin{assumption}
\label{ass:struc-regularity}
The functions \(h_i\), \(i=0,1,\ldots,d\), are convex piecewise linear. Specifically, for each \(i\in\{0,1,\ldots,d\}\), there exists a finite nonempty index set \(\mathcal A_i\) such that
$
        h_i(\w)
        =
        \max_{\ell\in\mathcal A_i}
        \{\langle \a_{i\ell},\w\rangle+b_{i\ell}\}. 
$ 
Let
$
\mathcal S
:=
\left\{
(\x,\y)\in\mathcal X\times\mathcal Y:
\y\in\mathcal Y(\x),\
\x=\x(\y,\x)
\right\}$.
For every \((\x^\star,\y^\star)\in\mathcal S\), define
$
\mathcal I(\x^\star):=\{i\in[d]:G_i(\x^\star)=0\}$ and
$\mathcal I_+(\x^\star):=\{i\in[d]:G_i(\x^\star)\ge0\}$.
The following conditions hold.
\begin{enumerate}[label=\normalfont(\roman*)]

\item \textnormal{(Active dual positivity).}
For every \((\x^\star,\y^\star)\in\mathcal S\), one has
$
y_i^\star>0$,
$\forall i\in\mathcal I(\x^\star)$.

\item \textnormal{(Pointwise active-plane linear independence).} 
For \(i\in\{0,1,\ldots,d\}\) and \(\ell\in\mathcal A_i\), set
$
        \phi_{i\ell}(\x)
        :=
        \langle \a_{i\ell},c_i(\x)\rangle+b_{i\ell}.
$ Let 
       $\mathcal A_0^\star:=\{\ell\in\mathcal A_0:\phi_{0\ell}(\x^\star)=f(\x^\star)\}$ and $\mathcal A_i^\star:=\{\ell\in\mathcal A_i:\phi_{i\ell}(\x^\star)=G_i(\x^\star)\}$,
 $i\in[d]$.
        Fix any \(\ell_0\in\mathcal A_0^\star\). The vectors 
        \[
        \left\{
        \nabla\phi_{0\ell}(\x^\star)
        -
        \nabla\phi_{0\ell_0}(\x^\star):
        \ell\in\mathcal A_0^\star\setminus\{\ell_0\}
        \right\}
        \cup
        \left\{
        \nabla\phi_{i\ell}(\x^\star):
        i\in \cI_+(\x^\star),\ \ell\in\mathcal A_i^\star
        \right\}
\]
are linearly independent. 
Moreover, there exists $\tilde\sigma>0$ such that, at every point
$(\x^\star,\y^\star)\in\mathcal S$, the smallest singular value of the matrix whose
columns are the above vectors is at least $\tilde\sigma$.

\item  \textnormal{(Primal interiority).}
$
\x^\star\in\operatorname{int}(\mathcal X)$.
\end{enumerate}
\end{assumption}

Under Assumption \ref{ass:struc-regularity} we can verify that \(\mathcal S\) coincides with KKT pairs whose multipliers lie in \(\mathcal Y\).
\begin{lemma}
\label{lem:S-eq-KKT}
Let \(r_{\y}=0\).   Suppose that Assumption~\ref{ass:struc-regularity}
holds and
$
R_{\y}>\sqrt{N_{\cA}}L/\tilde\sigma$ with $N_{\cA}:=\sum_{i=1}^d |\cA_i|$.
Then every \((\x^\star,\y^\star)\in\mathcal S\) satisfies
$
G(\x^\star)\le\bz$,
$\|\y^\star\|_1<
R_{\y}$,
and \((\x^\star,\y^\star)\) is a KKT pair of~\eqref{eq:problem}.
Conversely, every KKT pair \((\x^\star,\y^\star)\) of~\eqref{eq:problem}
with \(\y^\star\in\mathcal Y\) belongs to \(\mathcal S\). 
\end{lemma}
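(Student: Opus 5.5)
The plan is to write out the optimality conditions that define $\mathcal S$ and show they coincide with the KKT system. The one nontrivial point is to rule out the artificial boundary $\|\y^\star\|_1=R_{\y}$; for this I will use Assumption~\ref{ass:struc-regularity}(ii) to bound the multipliers.

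\emph{Optimality conditions for $\mathcal S$.} With $r_{\y}=0$ we have $F(\x,\y,\z)=\mathcal L_\rho(\x,\y)+\frac{r_{\x}}2\|\x-\z\|^2$, which is strongly convex in $\x$ (Lemma~\ref{lem: weakcvx}). By Danskin's theorem, $d(\cdot,\z)$ is concave and differentiable, with
\[
\nabla_{\y}d(\y,\z)=\nabla_{\y}\mathcal L_\rho(\x(\y,\z),\y)=\max\{G(\x(\y,\z)),-\y/\rho\}.
\]
Take $(\x^\star,\y^\star)\in\mathcal S$. Since $\x^\star=\x(\y^\star,\x^\star)$, the dual condition reads $\max\{G(\x^\star),-\y^\star/\rho\}\in\mathcal N_{\mathcal Y}(\y^\star)$. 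Every element of $\mathcal N_{\mathcal Y}(\y^\star)$ has the form $t\mathbf 1+\bm\nu$, where $t\ge0$ (and $t=0$ unless $\|\y^\star\|_1=R_{\y}$), $\nu_i\le0$, and $\nu_i=0$ whenever $y^\star_i>0$.

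For the primal side, the proximal term vanishes at $\x=\z=\x^\star$, and $\x^\star\in\operatorname{int}(\mathcal X)$ by Assumption~\ref{ass:struc-regularity}(iii). Each piece of $h_i\circ c_i=\max_\ell\phi_{i\ell}$ is a max of $C^1$ functions, hence regular, so the chain rule gives
\[
\bz=\sum_{\ell\in\mathcal A_0^\star}\lambda_{0\ell}\nabla\phi_{0\ell}(\x^\star)+\sum_{i=1}^d\mu_i\sum_{\ell\in\mathcal A_i^\star}\lambda_{i\ell}\nabla\phi_{i\ell}(\x^\star),
\]
where $\mu_i:=(y^\star_i+\rho G_i(\x^\star))_+$ and the $\lambda_{i\ell}$ are convex weights.

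\emph{Case analysis from the dual condition.} Each index falls into one of two cases.
\begin{itemize}
\item If $y_i^\star>0$, then $\max\{G_i,-y_i^\star/\rho\}=t\ge0$. Since $-y^\star_i/\rho<0$, this forces $G_i(\x^\star)=t$, so $i\in\mathcal I_+(\x^\star)$ and $\mu_i=y_i^\star+\rho t\ge y_i^\star$.
\item If $y_i^\star=0$, then $\mu_i=\rho(G_i)_+$, which is positive only when $G_i>0$.
\end{itemize}
Hence $\mu$ is supported on $\mathcal I_+(\x^\star)$ and $\|\y^\star\|_1\le\|\bm\mu\|_1$.

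\emph{Multiplier bound.} Fix $\ell_0\in\mathcal A_0^\star$ and use $\sum_\ell\lambda_{0\ell}=1$ to rewrite the stationarity condition as $M\w=-\nabla\phi_{0\ell_0}(\x^\star)$. Here $M$ is the matrix of Assumption~\ref{ass:struc-regularity}(ii), and $\w=\big((\lambda_{0\ell})_{\ell\ne\ell_0},(\mu_i\lambda_{i\ell})_{i\in\mathcal I_+,\ell}\big)$. Therefore $\|\w\|\le\|\nabla\phi_{0\ell_0}(\x^\star)\|/\tilde\sigma\le L/\tilde\sigma$.

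The bound $\|\nabla\phi_{0\ell_0}(\x^\star)\|\le L$ uses the Lipschitz constant $L$ of $f$ on $\mathcal X$, with $\x^\star$ interior. I expect this step to need the most care: one must choose $\ell_0$ appropriately, or argue via the active pieces, to justify it. Then Cauchy--Schwarz over the pieces gives
\[
\|\bm\mu\|_1=\sum_i\sum_\ell\mu_i\lambda_{i\ell}\le\sqrt{N_{\cA}}\,\|\w\|\le\sqrt{N_{\cA}}L/\tilde\sigma<R_{\y}.
\]

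\emph{Conclusion of the forward direction.} Since $\|\y^\star\|_1\le\|\bm\mu\|_1<R_{\y}$, the radius constraint is inactive, so $t=0$. The case analysis then gives $G_i=0$ whenever $y_i^\star>0$, and $G_i\le0$ whenever $y_i^\star=0$. Thus $G(\x^\star)\le\bz$, complementary slackness holds, and $\mu_i=y_i^\star$. With these, the stationarity identity above is exactly KKT stationarity, so $(\x^\star,\y^\star)$ is a KKT pair.

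\emph{Converse.} Let $(\x^\star,\y^\star)$ be a KKT pair with $\y^\star\in\mathcal Y$. Complementarity gives $(y_i^\star+\rho G_i(\x^\star))_+=y_i^\star$ for every $i$. Hence $\partial_{\x}\mathcal L_\rho(\x^\star,\y^\star)=\partial f(\x^\star)+\sum_i y_i^\star\partial G_i(\x^\star)$, and KKT stationarity gives $\bz\in\partial_{\x}F(\x^\star,\y^\star,\x^\star)+\mathcal N_{\mathcal X}(\x^\star)$. Because $F(\cdot,\y^\star,\x^\star)$ is strongly convex, this means $\x^\star=\x(\y^\star,\x^\star)$.

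By Danskin's theorem, $\nabla_{\y}d(\y^\star,\x^\star)=\max\{G(\x^\star),-\y^\star/\rho\}$. Componentwise this equals $0$ if $y_i^\star>0$ and is $\le0$ if $y_i^\star=0$. So it lies in $\mathcal N_{\R^d_+}(\y^\star)\subseteq\mathcal N_{\mathcal Y}(\y^\star)$, and concavity of $d(\cdot,\x^\star)$ gives $\y^\star\in\mathcal Y(\x^\star)$. Therefore $(\x^\star,\y^\star)\in\mathcal S$.
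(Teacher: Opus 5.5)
Your proposal is correct and follows essentially the paper's argument. The paper phrases the boundary exclusion as a contradiction, assuming $\|\y^\star\|_1=R_{\y}$, while you treat the normal-cone parameter $t\ge0$ uniformly; otherwise the support-on-$\mathcal I_+(\x^\star)$ step, the singular-value bound from Assumption~\ref{ass:struc-regularity}(ii), and the comparison $\|\y^\star\|_1\le\sum_i\mu_i$ are the same, as is the converse.

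The step you flagged needs no special choice of $\ell_0$. For any active $\ell_0$, $\phi_{0\ell_0}\le f$ with equality at the interior point $\x^\star$, so letting $t\downarrow0$ in $\phi_{0\ell_0}(\x^\star+t\d)-\phi_{0\ell_0}(\x^\star)\le f(\x^\star+t\d)-f(\x^\star)\le Lt\|\d\|$ gives $\|\nabla\phi_{0\ell_0}(\x^\star)\|\le L$. The paper instead simply enlarges $L$ without loss of generality.
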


\begin{remark}
(i) With Lemma \ref{lem:S-eq-KKT}, Assumption~\ref{ass:struc-regularity}(i) is exactly
strict complementarity on this KKT set, while
Assumption~\ref{ass:struc-regularity}(ii) is the corresponding
piecewise-linear analogue of LICQ for the composite KKT
system. (ii) Assumption~\ref{ass:struc-regularity}(iii) is used only in the local dual
error bound proof for \(r_{\y}=0\). This is a simplifying localization assumption, not a requirement inherent
to the algorithm. If the boundary of
\(\mathcal X\) is allowed to be active, then the normal cone
\(\mathcal N_{\mathcal X}\) must be included in the regularity assumptions. For instance, \cite[Assumption~2]{pu2024smoothed} includes the normal cone term
$\mathcal N_{\mathcal X}(\x)$ explicitly and
imposes LICQ on the combined system consisting of the original constraints and
the active constraints from an analytic representation of $\mathcal X$. Thus,
when the boundary of $\mathcal X$ can be active, the regularity condition must
also account for the geometry of $\mathcal X$. Our interiority assumption
avoids this additional layer and allows the local dual error bound analysis to
focus only on the functional constraints.
\end{remark}

\begin{proposition}[Local  dual error bound]
\label{prop:dual_eb_local}
Let \(r_{\y}=0\) and \((\x^\star,\y^\star)\in\mathcal{S}\). 
 Suppose that Assumption~\ref{ass:struc-regularity} holds, and $R_{\y}>\sqrt{N_{\cA}}L/\tilde \sigma$.
Then there exist constants \(\epsilon>0\) and \(\omega_2>0\),
independent of \(\z\) in a sufficiently small neighborhood of
\(\x^\star\), and a local selection \(\y(\z)\in\mathcal Y(\z)\) with
\(\y(\z)\to\y^\star\) as \(\z\to\x^\star\), such that, for any
\(\y\in\mathcal Y\) satisfying
$\y,\y_+(\z)\in\mathbb B_{\epsilon}(\y(\z))$,
we have
\begin{equation*}\label{eq:dualeb2}
    \|\x(\y(\z),\z)-\x(\y_+(\z),\z)\|
\le
\omega_2\|\y_+(\z)-\y\|.
\end{equation*}
\end{proposition}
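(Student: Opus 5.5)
The plan is to reduce the dual error bound to two local facts. The first is a Lipschitz bound on the dual-to-primal map $\y\mapsto\x(\y,\z)$. The second is a KL/error-bound inequality for the concave dual function $d(\cdot,\z)$ near $\y(\z)$, with exponent $\tfrac12$. Once these hold, the rest is the same as in Proposition~\ref{prop:dual_eb_KL}: \cite[Proposition~3]{li2025nonsmooth} turns a KL exponent $\theta=\tfrac12$ for the dual problem, valid on a ball $\mathbb B_\epsilon(\y(\z))$ with constant uniform in $\z$ near $\x^\star$, into the stated bound with some $\omega_2$. So the real task is to establish a local quadratic growth / error bound
\[
\dist(\y,\mathcal Y(\z))\le \kappa\,\|\y-\proj_{\mathcal Y}(\y+\alpha\nabla_{\y}d(\y,\z))\|,\qquad \y\in\mathbb B_\epsilon(\y(\z)),
\]
for $r_{\y}=0$, uniformly in $\z$ near $\x^\star$. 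The first step uses Lemma~\ref{lem:S-eq-KKT}: $(\x^\star,\y^\star)$ is a KKT pair with $\|\y^\star\|_1<R_{\y}$. This means the radius constraint of $\mathcal Y$ is locally inactive, so near $\y^\star$ the set $\mathcal Y$ looks like $\R^d_+$.

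The second step is to write the inner problem $\min_{\x}F(\x,\y,\z)$ in epigraphical form using Assumption~\ref{ass:struc-regularity}. Since each $h_i$ is a max of affine pieces, $h_i(c_i(\x))=\max_\ell\phi_{i\ell}(\x)$. The term $(y_i/\rho+G_i)_+^2$ is handled by an epigraph variable $t_i\ge\phi_{i\ell}(\x)$, and $f$ by $t_0\ge\phi_{0\ell}(\x)$. Near $\x^\star$, with $\x^\star\in\operatorname{int}(\mathcal X)$ by (iii), only the active pieces $\mathcal A_i^\star$ matter, and only the indices $i\in\mathcal I_+(\x^\star)$ have an active penalty. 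Strict complementarity (i) gives $y_i^\star>0$ on $\mathcal I(\x^\star)$, so $y_i/\rho+G_i>0$ stays bounded away from zero there. Together with the LICQ-type condition (ii), with singular value at least $\tilde\sigma$, this makes the KKT system of the smooth epigraphical reformulation nondegenerate. Its Jacobian contains the strongly convex Hessian block ($r_{\x}>L_\rho$) and full-column-rank active plane gradients. An implicit function theorem argument then shows that the active piece sets stay fixed near $(\y^\star,\x^\star)$. It also gives a smooth map $(\y,\z)\mapsto(\x(\y,\z),\text{piece multipliers})$, hence Lipschitz continuity of $\x(\cdot,\z)$ with a constant uniform in $\z$.

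The third step uses this structure to show that $d(\cdot,\z)$ is locally $C^2$, with $\nabla_{\y}d(\y,\z)=\max\{G(\x(\y,\z)),-\y/\rho\}$. On the coordinates $i\in\mathcal I_+(\x^\star)$ its Hessian is negative definite. Indeed, $-\nabla^2_{\y}d$ equals $\rho^{-1}I$ minus a term coming from $\x$'s response, and that response term is strictly smaller by a Schur complement argument: the LICQ gives an injective Jacobian $\nabla G_{\mathcal I_+}$ on the active pieces, and strong convexity in $\x$ does the rest. On the inactive coordinates with $G_i(\x^\star)<0$, the gradient equals $-y_i/\rho$, which is strongly concave with modulus $1/\rho$. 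Thus $d(\cdot,\z)$ is locally strongly concave near $\y^\star$ with a modulus $\mu>0$ uniform in $\z$ near $\x^\star$. This gives the KL exponent $\tfrac12$, and also a unique local maximizer $\y(\z)$ that depends continuously on $\z$ and satisfies $\y(\z)\to\y^\star$ (implicit function theorem again). Plugging $\mu$ and the Lipschitz constant into \cite[Proposition~3]{li2025nonsmooth}, exactly as in Proposition~\ref{prop:dual_eb_KL}, yields $\omega_2$.

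The main obstacle is the third step. Negative definiteness of the $\y$-Hessian in the constraint-active coordinates amounts to a precise Schur complement inequality, and it must hold uniformly. The piecewise structure also has to stay locally frozen in $\z$, which may require shrinking neighborhoods using continuity of $\phi_{i\ell}$ and the strict gaps for the inactive pieces. If local strong concavity fails in degenerate directions, the fallback is a piecewise-linear-quadratic error bound in the spirit of \cite{burke2020strong}: metric subregularity of the dual KKT map, giving the $\tfrac12$ KL exponent directly.
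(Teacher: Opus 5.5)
\textbf{There is a genuine gap in your third step.} Your route differs from the paper's: you prove local strong concavity of $d(\cdot,\z)$ (KL exponent $\tfrac12$) and then feed it into the argument of \cite[Proposition~3]{li2025nonsmooth}. The strong-concavity target is attainable, but not by the argument you give. Assumption~\ref{ass:struc-regularity}(i) gives strict complementarity only for the constraint multipliers $y_i^\star$, $i\in\mathcal I(\x^\star)$. Nothing prevents a piece $\ell\in\mathcal A_i^\star$ (including $i=0$) from being active with zero convex-combination weight $\mu_{i\ell}$. Without piece-level strict complementarity, the classical implicit function theorem does not apply to the lifted KKT system. The active piece sets can then switch arbitrarily close to $(\x^\star,\y^\star)$, $\x(\cdot,\z)$ is only piecewise smooth, and $d(\cdot,\z)$ is only $C^{1,1}$.

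Here is a concrete instance. Take $n=2$, $\mathcal X=[-1,1]^2$, $f(\x)=\max\{-x_1,x_2\}+\tfrac12\|\x\|^2$, and the single constraint $G(\x)=x_1\le0$. The only KKT pair is $(\bz,1)$, and for $R_{\y}$ large $\mathcal S=\{(\bz,1)\}$. Both pieces of $f$ are active at $\bz$, but the second has weight $0$, and Assumption~\ref{ass:struc-regularity}(i)--(iii) hold. With $a:=1+r_{\x}$, a direct computation gives
\[
\nabla_{\y}d(y,\bz)=\frac{1-y}{\rho+2a}\quad(y\le1),\qquad
\nabla_{\y}d(y,\bz)=\frac{1-y}{\rho+a}\quad(y\ge1),
\]
and the active piece set of $\x(y,\bz)$ drops from two pieces to one as $y$ crosses $1$. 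So your claims that the active pieces stay fixed, that $(\y,\z)\mapsto\x(\y,\z)$ is smooth, and that $d(\cdot,\z)$ is locally $C^2$ are false under the stated hypotheses. There is no Hessian at $\y^\star$ for your Schur-complement argument to act on, and the Burke--Engle fallback is only a pointer, not an argument.

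\textbf{How to repair it, and how the paper avoids it.} Strong concavity does survive in this example, and you can prove it without second derivatives. Fix $\y,\y'$ near $\y(\z)$ and set $\bar\x:=\x(\y,\z)$. Use the LICQ matrix of Assumption~\ref{ass:struc-regularity}(ii), which keeps full rank near $\x^\star$, to choose $\Delta$ with $\nabla\phi_{i\ell}(\bar\x)^\top\Delta=-(y_i'-y_i)$ for all $i\in\mathcal I$, $\ell\in\mathcal A_i^\star$, and with $\nabla\phi_{0\ell}(\bar\x)^\top\Delta$ constant over $\mathcal A_0^\star$; then $\|\Delta\|\lesssim\|\y'-\y\|/\tilde\sigma$. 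Stationarity of $\bar\x$ cancels the first-order term of $s\mapsto F(\bar\x+s\Delta,\y,\z)$. The coupling term $\langle \y'_{\mathcal I}-\y_{\mathcal I},G_{\mathcal I}(\bar\x+s\Delta)\rangle$ decreases at rate $s\|\y'_{\mathcal I}-\y_{\mathcal I}\|^2$, and the inactive block is exactly $-\|\y_{\mathcal I^c}\|^2/(2\rho)$. Testing $d(\y',\z)\le F(\bar\x+s\Delta,\y',\z)$ with a fixed small $s$ then yields $d(\y',\z)\le d(\y,\z)+\langle\nabla_{\y}d(\y,\z),\y'-\y\rangle-\tfrac{\mu}{2}\|\y'-\y\|^2$ uniformly in $\z$. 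Existence and continuity of $\y(\z)$ should then come from Berge's theorem rather than the implicit function theorem. The conversion to the primal bound needs the resulting error bound only at $\y_+(\z)$, so a local version suffices.

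The paper never proves dual strong concavity. It parametrizes the lifted NLP \eqref{eq:nlp} by the right-hand side $\r$ of $t_i\le r_i$ instead of by $\y$. It notes that $\x(\y,\z)$ is a KKT point at $\r=G_{\mathcal I}(\x(\y,\z))$ and $\x(\y(\z),\z)$ at $\r=\bz$. It then applies Robinson's strong regularity (LICQ plus SSOSC, which tolerates exactly the missing piece-level strict complementarity) to get $\|\x(\y,\z)-\x(\y(\z),\z)\|\le c_{\rm eb}\|G_{\mathcal I}(\x(\y,\z))\|$. Finally it bounds this by the dual stationarity residual, using $y_i\ge\gamma>0$ on $\mathcal I$. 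That delivers only the primal displacement bound actually needed. A repaired version of your route would give more, namely a dual error bound and local uniqueness of $\y(\z)$, at the cost of the extra construction above.
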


To use the local dual error bound in Proposition~\ref{prop:dual_eb_local}
to handle the negative term in Proposition \ref{prop:decrease}, we must first ensure that the
relevant iterates lie in the neighborhood where the local dual error bound is valid.
The next two auxiliary lemmas provide this bridge. The first lemma gives a
global nonhomogeneous dual error bound. Although weaker than the local
dual error bound, it is sufficient to show that if the negative term in
the basic descent estimate is large, then the associated algorithmic
residuals must be small. Its proof is an adaptation of the argument in
\cite[Lemma~8]{li2025nonsmooth}. The second lemma then uses these small
residuals to place the iterates inside the local regime required by
Proposition~\ref{prop:dual_eb_local}.

\begin{lemma}
\label{lemma-dual-bd}
 Let  $\kappa
:=
\frac{\sqrt{2}
(\alpha^{-1}+L_d)R_{\y}
}{
r_{\x}-L_\rho
}
$. For any
\((\y,\z)\in\mathcal Y\times\R^n\), 
\begin{equation*}\label{eq:nonhom-dual-eb}
\|\x(\y(\z),\z)-\x(\y_+(\z),\z)\|^{2}
\leq
\kappa\|\y-\y_+(\z)\|. 
\end{equation*}
\end{lemma}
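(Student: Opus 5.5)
Write $\y_+:=\y_+(\z)$, $\bar\y:=\y(\z)\in\mathcal Y(\z)$, $\x_+:=\x(\y_+,\z)$ and $\bar\x:=\x(\bar\y,\z)$. The plan has three parts: convert the primal perturbation into a gap in the dual objective by strong convexity of $F(\cdot,\y,\z)$; bound that gap with concavity of $d(\cdot,\z)$ and the optimality conditions of the projected step; and finish with the diameter of $\mathcal Y$, which is what produces the factor $R_{\y}$ in $\kappa$.

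\textbf{Step 1 (strong convexity).} Since $F(\cdot,\y,\z)$ is $(r_{\x}-L_\rho)$-strongly convex on $\mathcal X$ with minimizers $\bar\x$ (for $\bar\y$) and $\x_+$ (for $\y_+$), we have
\[
F(\bar\x,\y_+,\z)\ge d(\y_+,\z)+\tfrac{r_{\x}-L_\rho}{2}\|\bar\x-\x_+\|^2,
\qquad
F(\x_+,\bar\y,\z)\ge d(\bar\y,\z)+\tfrac{r_{\x}-L_\rho}{2}\|\bar\x-\x_+\|^2 .
\]
Adding the two inequalities gives
\[
(r_{\x}-L_\rho)\|\bar\x-\x_+\|^2\le \big[F(\bar\x,\y_+,\z)-F(\bar\x,\bar\y,\z)\big]+\big[F(\x_+,\bar\y,\z)-F(\x_+,\y_+,\z)\big].
\]
Concavity of $F(\x,\cdot,\z)$ (Lemma~\ref{lem: weakcvx} plus the $-\frac{r_{\y}}2\|\y\|^2$ term) bounds the first bracket by $\langle\nabla_{\y}F(\bar\x,\bar\y,\z),\y_+-\bar\y\rangle$ and the second by $\langle\nabla_{\y}F(\x_+,\y_+,\z),\bar\y-\y_+\rangle$. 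By Danskin's theorem these gradients equal $\nabla d(\bar\y,\z)$ and $\nabla d(\y_+,\z)$.

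\textbf{Step 2 (using the dual optimality and the projection).} Optimality of $\bar\y$ gives $\langle\nabla d(\bar\y,\z),\y_+-\bar\y\rangle\le0$, so
\[
(r_{\x}-L_\rho)\|\bar\x-\x_+\|^2\le\langle\nabla d(\y_+,\z),\bar\y-\y_+\rangle .
\]
The projection characterization $\langle \y+\alpha\nabla d(\y,\z)-\y_+,\,\bar\y-\y_+\rangle\le0$ then yields
\[
\langle\nabla d(\y,\z),\bar\y-\y_+\rangle\le\alpha^{-1}\langle\y_+-\y,\bar\y-\y_+\rangle .
\]
To pass from $\nabla d(\y,\z)$ to $\nabla d(\y_+,\z)$, use that $\nabla_{\y} d(\cdot,\z)$ is $L_d$-Lipschitz. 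This follows from $\nabla_{\y}F=\max\{G(\x),-\y/\rho\}-r_{\y}\y$, the $\sigma_2$-Lipschitz dependence of $\x(\cdot,\z)$ on $\y$, and the constants in \eqref{eq:constants}; it is the standard fact in \cite{li2025nonsmooth}. Combining,
\[
(r_{\x}-L_\rho)\|\bar\x-\x_+\|^2\le(\alpha^{-1}+L_d)\,\|\y_+-\y\|\,\|\bar\y-\y_+\| .
\]

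\textbf{Step 3 (diameter bound).} Both $\bar\y$ and $\y_+$ lie in $\mathcal Y$, so $\|\bar\y-\y_+\|\le\operatorname{diam}(\mathcal Y)$. For two nonnegative vectors with $\ell_1$-norm at most $R_{\y}$, one has $\|\bar\y-\y_+\|\le\|\bar\y-\y_+\|_1$ but also $\|\bar\y-\y_+\|^2\le\|\bar\y\|^2+\|\y_+\|^2\le 2R_{\y}^2$, since the cross term is nonnegative. Hence $\|\bar\y-\y_+\|\le\sqrt2R_{\y}$. Dividing by $r_{\x}-L_\rho$ gives exactly $\kappa=\sqrt2(\alpha^{-1}+L_d)R_{\y}/(r_{\x}-L_\rho)$.

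The only delicate point is the Lipschitz continuity of $\nabla d(\cdot,\z)$ with constant $L_d$. This requires that $\x(\cdot,\z)$ be $\sigma_2$-Lipschitz in $\y$, which follows from strong convexity together with the fact that $\y\mapsto F(\x,\y,\z)-F(\x',\y,\z)$ has gradient difference bounded by $L_G\|\x-\x'\|$. The argument is otherwise routine, and I would cite \cite[Lemma~8]{li2025nonsmooth} for this ingredient.
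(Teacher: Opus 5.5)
Your proof is correct, and it is essentially the adaptation of \cite[Lemma~8]{li2025nonsmooth} that the paper invokes without writing it out. You sum the two strong-convexity inequalities (as in the proof of \eqref{lip-y}), bound the $\y$-differences by concavity and Danskin, drop one term by optimality of $\y(\z)$, and convert the other via the projection inequality and the $L_d$-Lipschitz continuity of $\nabla_{\y}d(\cdot,\z)$; the Euclidean diameter $\sqrt{2}R_{\y}$ of $\mathcal Y$ then gives exactly the stated $\kappa$. The only quibble is attribution: the $L_d$-Lipschitz property is already Lemma~\ref{lemma-dualdiff} in the paper, whereas \cite[Lemma~8]{li2025nonsmooth} is the analogue of the whole statement, not of that ingredient.
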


\begin{lemma}
\label{lem:local_entry}
Let \(r_{\y}=0\). Suppose that the assumptions of
Proposition~\ref{prop:dual_eb_local} hold.
Then there exists
\(\bar\delta>0\) such that if
\[
\max\{
\|\x^k-\x^{k+1}\|,
\|\y^k-\y_+^k(\z^k)\|,
\|\z^k-\x^{k+1}\|
\}
\le
\bar\delta,
\]
we have
$
\y^k,\ \y_+^k(\z^k)
\in
\mathbb B_{\epsilon}(\y(\z^k))$.
\end{lemma}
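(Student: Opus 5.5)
We argue by contradiction combined with compactness, since $\mathcal X\times\mathcal Y$ is compact and all the relevant maps are continuous. Suppose no such $\bar\delta$ exists. Then there is a sequence of iterates (possibly from different runs, or different indices $k_j$) with
\[
\max\{\|\x^{k_j}-\x^{k_j+1}\|,\|\y^{k_j}-\y_+^{k_j}(\z^{k_j})\|,\|\z^{k_j}-\x^{k_j+1}\|\}\le 1/j,
\]
yet $\y^{k_j}$ or $\y_+^{k_j}(\z^{k_j})$ lies outside $\mathbb B_\epsilon(\y(\z^{k_j}))$. Passing to a subsequence, $(\x^{k_j},\y^{k_j},\z^{k_j})\to(\bar\x,\bar\y,\bar\z)$, and then $\x^{k_j+1}\to\bar\x$ and $\bar\z=\bar\x$.

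The first step is to show that $(\bar\x,\bar\y)\in\mathcal S$. Write $\x^{k+1}$ as the minimizer of the prox-linear model plus $\frac{r_{\x}}2\|\x-\z^k\|^2$. When $\x^k-\x^{k+1}\to\bz$, the linearization error and the $\frac1{2\lambda}$ term vanish. The optimality conditions are outer semicontinuous, because the subdifferentials of the $h_i$ are closed and $c_i,\nabla c_i$ are continuous. Hence $\bar\x$ minimizes $F(\cdot,\bar\y,\bar\z)$, which gives $\bar\x=\x(\bar\y,\bar\x)$; uniqueness follows from strong convexity. Next, $\|\y^{k}-\y_+^k(\z^k)\|\to0$ together with continuity of $(\y,\z)\mapsto\y_+(\z)$ gives $\bar\y=\proj_{\mathcal Y}(\bar\y+\alpha\nabla_{\y}d(\bar\y,\bar\x))$. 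Since $d(\cdot,\bar\x)$ is concave and smooth by Danskin's theorem, this fixed-point relation gives $\bar\y\in\mathcal Y(\bar\x)$. Therefore $(\bar\x,\bar\y)\in\mathcal S$.

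The second step, and the main obstacle, is to compare $\bar\y$ with the local selection $\y(\z)$ of Proposition~\ref{prop:dual_eb_local}, taken at $\x^\star=\bar\x$. The difficulty is that Proposition~\ref{prop:dual_eb_local} is stated for a fixed saddle point, with radius $\epsilon$ and a selection that depend on $(\x^\star,\y^\star)$. I would use the structure from Assumption~\ref{ass:struc-regularity} and Lemma~\ref{lem:S-eq-KKT}: under strict complementarity and the piecewise LICQ, the multiplier attached to a KKT point $\x^\star$ is unique. Thus $\mathcal Y(\x^\star)$ restricted to $\mathcal S$ is the singleton $\{\y^\star\}$, so $\bar\y=\y^\star$. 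Also $\y(\z^{k_j})\to\y^\star$, because $\z^{k_j}\to\bar\x$ and the selection is continuous at $\x^\star$. If $\epsilon$ and $\omega_2$ can be chosen uniformly over $\mathcal S$, this yields
\[
\|\y^{k_j}-\y(\z^{k_j})\|\to0
\quad\text{and}\quad
\|\y_+^{k_j}(\z^{k_j})-\y(\z^{k_j})\|\le\|\y_+^{k_j}-\y^{k_j}\|+\|\y^{k_j}-\y(\z^{k_j})\|\to0,
\]
which contradicts the assumption that one of them stays at distance at least $\epsilon$.

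To get that uniformity, I would first try to show that the constants in the proof of Proposition~\ref{prop:dual_eb_local} depend only on $\tilde\sigma$, on $L$, $L_c$, $L_h$, and on the strict-complementarity gap. The gap $\min_{i\in\mathcal I(\x^\star)}y_i^\star$ would be bounded below on the compact set $\mathcal S$ by a compactness argument, using closedness of $\mathcal S$ and condition (i). If that fails, I would cover $\mathcal S$ by finitely many neighborhoods from Proposition~\ref{prop:dual_eb_local} and take the minimum of the resulting $\epsilon$'s. In that case the selection $\y(\z)$ is taken as the one associated with the neighborhood containing $\z$, with consistency guaranteed by uniqueness of the multiplier. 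Finally, $\bar\delta$ is the value extracted from the contradiction.
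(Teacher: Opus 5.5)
Your proposal is correct and follows the paper's own argument. The steps match: contradiction plus compactness, identification of the limit $(\bar\x,\bar\y)\in\mathcal S$ through the fixed-point relation for $\y_+$, and a contradiction with the neighborhood of Proposition~\ref{prop:dual_eb_local}. There are two deviations. You obtain $\bar\x=\x(\bar\y,\bar\x)$ from outer semicontinuity of the prox-linear optimality conditions instead of the error bound in Lemma~\ref{prop:lip}. You also make explicit the uniformity of $\epsilon$ and of the selection $\y(\cdot)$ over $\mathcal S$, which the paper's final step takes for granted.

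Of your two routes to that uniformity, rely on the finite cover of the compact set $\mathcal S$. The neighborhood in Proposition~\ref{prop:dual_eb_local} also depends on non-uniform data, such as the gap between active and inactive affine pieces of the $h_i$, so the uniform-constants route is doubtful. With the cover, $\y(\z^{k_j})\to\bar\y$ follows directly from outer semicontinuity of $\mathcal Y(\cdot)$ together with $\mathcal Y(\bar\x)=\{\bar\y\}$. That singleton property comes from multiplier uniqueness under Assumption~\ref{ass:struc-regularity}(ii)--(iii) and from the fact that all dual maximizers generate the same primal point.
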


Combining the basic descent estimate with the appropriate dual error
bound yields the following sufficient decrease property.

\begin{proposition}[Sufficient decrease property]
\label{prop:suff-decrease}
Suppose that the assumptions of Proposition~\ref{prop:decrease} hold. Assume that one of the following two cases holds.
\begin{enumerate}[label=\normalfont(\roman*)]
\item \textnormal{(\(r_{\y}>0\)).}
Suppose that
$
\beta
\le
\frac{1}{448 r_{\x}\omega_1^2\alpha}.
$ 
\item \textnormal{(\(r_{\y}=0\)).}
Suppose that Assumption \ref{ass:struc-regularity} holds and $ 
R_{\y}>
\sqrt{N_{\cA}} L/\tilde\sigma
$. 
Define
$
\varrho_1:=448r_{\x}\alpha\kappa,
\varrho_2:=\sqrt{98\kappa\varrho_1},
$   and $
\varrho_3:=\sqrt{128r_{\x}\lambda\kappa\varrho_1}.
$ 
Assume that
$
\beta
\le
\min\{
\frac{\bar\delta}{\varrho_1},
\frac{\bar\delta^2}{\varrho_2^2},
\frac{\bar\delta}{\varrho_3},
\frac{1}{448r_{\x}\omega_2^2\alpha}
\},
$ 
where $\bar\delta>0$ is the constant from Lemma~\ref{lem:local_entry}.
\end{enumerate}
Then for any \(k\ge0\), we have
\begin{equation}\label{eq:suff-decrease}
\begin{aligned}
\Phi^k-\Phi^{k+1}
\ge\ &
c_\beta(\|\x^{k}-\x^{k+1}\|^{2}+\|\y^{k}-\y_{+}^{k}(\z^{k})\|^2+\|\z^{k}-\x^{k+1}\|^{2}),
\end{aligned}
\end{equation}
where  $
c_\beta
:=
\min\{
\frac{3}{16\lambda},
\frac{1}{16\alpha},
\frac{2r_{\x}\beta}{7}
\}$. 
\end{proposition}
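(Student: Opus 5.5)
The argument starts from Proposition~\ref{prop:decrease} and absorbs the negative sensitivity term
\[
T_k:=28r_{\x}\beta\|\x(\y(\z^{k}),\z^{k})-\x(\y_+^{k}(\z^k),\z^{k})\|^{2}
\]
into the positive terms. The target is
\[
T_k\le \tfrac{1}{16\alpha}\|\y^k-\y_+^k(\z^k)\|^2+\tfrac{2r_{\x}\beta}{7}\|\z^k-\x^{k+1}\|^2+\tfrac14\tfrac{1}{\lambda}\|\x^k-\x^{k+1}\|^2 .
\]
Since $\frac{7}{16\lambda}-\frac{4}{16\lambda}=\frac{3}{16\lambda}$, $\frac1{8\alpha}-\frac1{16\alpha}=\frac1{16\alpha}$ and $\frac{4r_{\x}\beta}{7}-\frac{2r_{\x}\beta}{7}=\frac{2r_{\x}\beta}{7}$, this bound yields \eqref{eq:suff-decrease} with the stated $c_\beta$. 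In practice only the dual-residual term, and in one subcase the others, will be consumed.

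\textbf{Case (i), $r_{\y}>0$.} Apply Proposition~\ref{prop:dual_eb_KL} at $(\y^k,\z^k)$ to get
\[
T_k\le 28r_{\x}\beta\omega_1^2\|\y^k-\y_+^k(\z^k)\|^2 .
\]
The condition $\beta\le 1/(448r_{\x}\omega_1^2\alpha)$ gives $28r_{\x}\beta\omega_1^2\le \frac{1}{16\alpha}$. This absorbs $T_k$ into the dual term, and the remaining coefficients dominate $c_\beta$.

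\textbf{Case (ii), $r_{\y}=0$.} Write $D:=\max\{\|\x^k-\x^{k+1}\|,\|\y^k-\y_+^k(\z^k)\|,\|\z^k-\x^{k+1}\|\}$ and split into two subcases.

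If $D\le\bar\delta$, Lemma~\ref{lem:local_entry} puts $\y^k,\y_+^k(\z^k)$ in $\mathbb B_\epsilon(\y(\z^k))$. Proposition~\ref{prop:dual_eb_local} then applies with the local selection $\y(\z^k)$; this implicitly requires $\z^k$ to lie in the neighborhood of some $\x^\star$, which I take to be part of what Lemma~\ref{lem:local_entry} delivers. We get $T_k\le 28r_{\x}\beta\omega_2^2\|\y^k-\y_+^k\|^2$, and $\beta\le 1/(448r_{\x}\omega_2^2\alpha)$ absorbs it exactly as in case (i).

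If $D>\bar\delta$, use Lemma~\ref{lemma-dual-bd} instead: $T_k\le 28r_{\x}\beta\kappa\|\y^k-\y_+^k\|$. Show that this is at most $\frac12$ of the available positive quantity by treating the three components of $D$ separately.
\begin{itemize}
\item If $\|\y^k-\y_+^k\|>\bar\delta$: using $\beta\le\bar\delta/\varrho_1$ gives $28r_{\x}\beta\kappa\le \bar\delta/(16\alpha)$. Hence $T_k\le \frac{1}{16\alpha}\bar\delta\|\y^k-\y_+^k\|\le\frac1{16\alpha}\|\y^k-\y_+^k\|^2$.
\item If instead $\|\z^k-\x^{k+1}\|>\bar\delta$: bound $\|\y^k-\y_+^k\|\le\sqrt2R_{\y}$ (both points lie in $\mathcal Y$), or keep the linear term and use Young's inequality. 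Then compare $28r_{\x}\beta\kappa\|\y^k-\y_+^k\|$ with $\frac{2r_{\x}\beta}{7}\bar\delta^2$. The constant $\varrho_2=\sqrt{98\kappa\varrho_1}$ and $\beta\le\bar\delta^2/\varrho_2^2$ are calibrated so that Young's inequality splits the product into a $\frac1{16\alpha}\|\y^k-\y_+^k\|^2$ part and a $\frac{2r_{\x}\beta}{7}\|\z^k-\x^{k+1}\|^2$ part.
\item If $\|\x^k-\x^{k+1}\|>\bar\delta$: proceed similarly with $\frac{1}{4\lambda}\|\x^k-\x^{k+1}\|^2$, using $\varrho_3=\sqrt{128r_{\x}\lambda\kappa\varrho_1}$ and $\beta\le\bar\delta/\varrho_3$.
\end{itemize}

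The main obstacle is this large-residual subcase. The nonhomogeneous bound is only linear in the dual residual, so it must be traded, via Young's inequality with a $\beta$-dependent weight, against whichever residual is large. The arithmetic must be checked so that the specific constants $\varrho_1,\varrho_2,\varrho_3$ leave coefficients at least $c_\beta$ on all three squared residuals simultaneously. A second, subtler point is confirming that the small-residual regime really places $\z^k$ near a point of $\mathcal S$, so that the local selection in Proposition~\ref{prop:dual_eb_local} is available.
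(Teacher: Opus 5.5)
Your proof is correct, but you organize case (ii) differently from the paper.

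The paper splits on whether the negative term $T_k$ exceeds half of the largest positive term in Proposition~\ref{prop:decrease}. If it does not, absorption is immediate: each coefficient is halved, and $\frac{7}{32\lambda}\ge\frac{3}{16\lambda}$. If it does, the paper chains this dominance with Lemma~\ref{lemma-dual-bd} to get $\|\y^k-\y_+^k(\z^k)\|\le\varrho_1\beta$, $\|\x^{k+1}-\z^k\|^2\le\varrho_2^2\beta$ and $\|\x^k-\x^{k+1}\|^2\le\varrho_3^2\beta^2$. So the hypothesis of Lemma~\ref{lem:local_entry} is \emph{derived}, and the local bound then applies.

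You instead split directly on that hypothesis. When $D>\bar\delta$, you absorb $T_k\le 28r_{\x}\beta\kappa s$, with $s:=\|\y^k-\y_+^k(\z^k)\|$, by AM--GM against whichever residual exceeds $\bar\delta$. The arithmetic you left open does close:
\begin{itemize}
\item In the $\z$-subcase you need $28r_{\x}\beta\kappa\le 2\bar\delta\sqrt{\frac{1}{16\alpha}\cdot\frac{2r_{\x}\beta}{7}}$, i.e.\ $\beta\le\bar\delta^2/(10976\,r_{\x}\alpha\kappa^2)$. This follows from $\beta\le\bar\delta^2/\varrho_2^2=\bar\delta^2/(43904\,r_{\x}\alpha\kappa^2)$.
\item In the $\x$-subcase you need $\beta\le\bar\delta/(112\,r_{\x}\kappa\sqrt{\alpha\lambda})$. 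This follows from $\beta\le\bar\delta/\varrho_3$, because $\varrho_3=\sqrt{57344}\,r_{\x}\kappa\sqrt{\alpha\lambda}$ and $\sqrt{57344}>239$.
\end{itemize}
The AM--GM bound holds for any $s$, so the order in which you test the three subcases is irrelevant.

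Discard the alternative bound $s\le\sqrt{2}\,R_{\y}$ in the $\z$-subcase. There $\beta$ cancels, and you would need $\bar\delta^2\ge 98\sqrt{2}\,\kappa R_{\y}$, which nothing guarantees.

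As for the trade-off: the paper's split produces the thresholds $\varrho_i$ mechanically and needs no Young step. Yours states the dichotomy explicitly (small residuals give the local bound; large residuals make the nonhomogeneous bound already sufficient). It also recovers $c_\beta$ exactly via $\frac{7}{16\lambda}-\frac{1}{4\lambda}=\frac{3}{16\lambda}$, and it shows the stated conditions on $\beta$ have slack.

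The uniformity issue you flag is the need for a single $\epsilon$, $\omega_2$, $\bar\delta$ valid along the whole trajectory, with $\z^k$ near $\mathcal S$. Both arguments inherit it equally from Lemma~\ref{lem:local_entry} and Proposition~\ref{prop:dual_eb_local}.
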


\subsection{Good Iterates: Near-Feasibility and Iteration Complexity}
\label{subsec:good_iterates}

Next, we identify a large set of good iterations. These iterations have
two properties: The primal point is nearly feasible, and the artificial
boundary of \(\mathcal Y\) is inactive at both \(\y^k\) and
\(\y^{k+1}\).

\begin{proposition}[Frequency of near-feasible iterates]
\label{prop:primalbd}
Suppose that the assumptions of Proposition~\ref{prop:suff-decrease}
hold with
$
 r_{\y}\leq \frac{1}{\alpha}$.
Then, for any fixed \(K\in\mathbb N\) and any \(\delta>0\),
\[
\#\left\{
0\le k\le K-1:
\|G(\x^{k+1})_+\|>\delta
\right\}
\le
\frac{2K(\Phi^0-f_{\min})}{\rho\delta^2}.
\]
Equivalently,
$
\|G(\x^{k+1})_+\|\le \delta$
holds for at least
$
K-
\frac{2K(\Phi^0-f_{\min})}{\rho\delta^2}$
indices \(k\).
\end{proposition}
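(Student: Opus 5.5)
Using the sufficient decrease property (Proposition~\ref{prop:suff-decrease}), I would show that $\Phi^k\ge f_{\min}$ for every $k$ (here $f_{\min}:=\min_{\x\in\mathcal X}f(\x)$) and that $\Phi^k$ is nonincreasing. Then $\Phi^{k}\le \Phi^0$ for all $k$. Next I would bound $\frac{\rho}{2}\|G(\x^{k+1})_+\|^2$ from above by a quantity controlled by $\Phi^0-f_{\min}$, and finish with a Markov-type counting argument.

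\textbf{Step 1 (lower bound of $\Phi$).} Since $p(\z)\ge d(\y,\z)$ and $F(\x,\y,\z)\ge d(\y,\z)$, we have $\Phi(\x,\y,\z)\ge p(\z)$. I will show $p(\z)\ge f_{\min}$ by taking $\y=\bz\in\mathcal Y$: then $p(\z)\ge d(\bz,\z)=\min_{\x}\{f(\x)+\frac{\rho}{2}\|G(\x)_+\|^2+\frac{r_{\x}}2\|\x-\z\|^2\}\ge f_{\min}$. The key refinement is to compare $p(\z)$ with $d(\y,\z)$ for a large multiplier rather than $\bz$, so that feasibility enters.

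\textbf{Step 2 (linking constraint violation to the potential drop).} I would use the more informative choice $\y=\proj_{\mathcal Y}(\y^{k}+\alpha\nabla_{\y}d(\y^k,\z^k))$, or more simply the observation that, for every $\y\in\mathcal Y$ and $\x\in\mathcal X$, completing the square in the augmented term gives
\[
\frac{\rho}{2}\Bigl[\Bigl(\tfrac{y_i}{\rho}+G_i\Bigr)_+^2-\tfrac{y_i^2}{\rho^2}\Bigr]\ge \tfrac{\rho}{2}(G_i)_+^2 - \text{(terms vanishing when } y_i=0).
\]
The cleanest route is to show $F(\x^{k+1},\y^{k+1},\z^{k+1})-r$-bias terms $\ge f_{\min}+\frac{\rho}{2}\|G(\x^{k+1})_+\|^2\cdot\frac12$ whenever the dual step does not reduce $\Phi$ enough. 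Concretely, I would bound
\[
\Phi^{k+1}\ge F(\x^{k+1},\y^{k+1},\z^{k+1})-d(\y^{k+1},\z^{k+1})+p(\z^{k+1})\ge \mathcal L_\rho(\x^{k+1},\y^{k+1})-\tfrac{r_{\y}}2\|\y^{k+1}\|^2 ,
\]
and then use that $\mathcal L_\rho(\x,\y)$ is concave in $\y$ with $\mathcal L_\rho(\x,\bz)=f(\x)+\frac\rho2\|G(\x)_+\|^2$. A one-step ascent inequality from $\y^{k}$ with step $\alpha$ and the condition $r_{\y}\le 1/\alpha$ should give $\mathcal L_\rho(\x^{k+1},\y^{k+1})-\frac{r_{\y}}2\|\y^{k+1}\|^2\ge \frac12\bigl(f(\x^{k+1})+\frac\rho2\|G(\x^{k+1})_+\|^2\bigr)+\frac12 f_{\min}-(\text{controlled})$. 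Hence $\frac{\rho}{4}\|G(\x^{k+1})_+\|^2\le \Phi^{k+1}-f_{\min}\le\Phi^0-f_{\min}$, i.e. $\|G(\x^{k+1})_+\|^2\le 4(\Phi^0-f_{\min})/\rho$ uniformly, or at least on average, giving $\sum_{k<K}\|G(\x^{k+1})_+\|^2\le 2K(\Phi^0-f_{\min})/\rho$.

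\textbf{Step 3 (counting).} By Chebyshev/Markov, $\delta^2\cdot\#\{k:\|G(\x^{k+1})_+\|>\delta\}\le\sum_{k<K}\|G(\x^{k+1})_+\|^2\le 2K(\Phi^0-f_{\min})/\rho$, which is the claim; the complementary count is immediate.

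The main obstacle is Step 2: extracting $\frac{\rho}{2}\|G(\x^{k+1})_+\|^2$ from $\Phi^{k+1}$ despite the nonzero dual iterate, whose term $-\frac{y_i^2}{2\rho}$ can offset the penalty. I would first try comparing $p(\z^{k+1})$ with $d(\bz,\z^{k+1})$ evaluated at $\x(\bz,\z^{k+1})$ close to $\x^{k+1}$ (via Lipschitz continuity of $\x(\cdot,\z)$ with constant $\sigma_2$), using $r_{\y}\le1/\alpha$ to keep the dual-step bias nonnegative.
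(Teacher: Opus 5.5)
\textbf{There is a genuine gap: Step~2, which carries the whole content of the proposition, is never derived, and the bound you sketch there is not what the argument can give.}

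Your outer skeleton matches the paper. $\Phi^k$ is nonincreasing by Proposition~\ref{prop:suff-decrease}, $\Phi\ge p\ge d(\bz,\cdot)\ge f_{\min}$, $F(\x^{k+1},\y^{k+1},\z^{k+1})\le\Phi^{k+1}\le\Phi^0$, and a Markov count finishes. The problems are in Step~2:
\begin{itemize}
\item The averaged inequality with a ``(controlled)'' remainder is asserted, not proved. Concavity of $\mathcal L_\rho(\x,\cdot)$ along a segment bounds the function from below at interior points, not at the endpoint $\y^{k+1}$. The unspecified remainder is exactly the multiplier term that must be handled.
\item No per-iterate bound $\frac{\rho}{4}\|G(\x^{k+1})_+\|^2\le\Phi^0-f_{\min}$ comes out of this kind of argument. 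The multiplier contributes $\frac{1}{2\alpha}\bigl(\|\y^{k+1}\|^2-\|\y^k\|^2\bigr)$, which is negative whenever the multiplier shrinks, down to $-\|\y^k\|^2/(2\alpha)$. Only its sum over $k$, telescoped with $\y^0=\bz$, is nonnegative. That is why the result is a counting statement and why the constant is $2$. A uniform $\rho/4$ bound would give $4K$, not the $2K$ you then write.
\item Your fallback of comparing with $d(\bz,\z^{k+1})$ does not work. It controls the violation at $\x(\bz,\z^{k+1})$, not at $\x^{k+1}$. The transfer goes through $\|\x(\bz,\z)-\x(\y,\z)\|\le\sigma_2\|\y\|$, which is not small because $\|\y\|$ can be of order $R_{\y}>\rho R_{\x}$. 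Meanwhile you need $\|G(\x^{k+1})_+\|=O(\rho^{-1/2})$.
\end{itemize}

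\textbf{What is missing} is a lower bound on $\mathcal L_\rho(\x^{k+1},\y^{k+1})-\frac{r_{\y}}{2}\|\y^{k+1}\|^2$ that is tied to the dual update and telescopes. Let $\q^{k+1}:=\max\{G(\x^{k+1}),-\y^k/\rho\}$, the actual ascent direction, so $\y^{k+1}=\proj_{\mathcal Y}(\y^k+\alpha(\q^{k+1}-r_{\y}\y^k))$. The paper proves
\[
\mathcal L_\rho(\x^{k+1},\y^{k+1})\ge f(\x^{k+1})+\langle\y^{k+1},\q^{k+1}\rangle+\frac{\rho}{2}\|G(\x^{k+1})_+\|^2
\]
by a coordinatewise case check. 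The delicate case $G_i(\x^{k+1})<-y_i^k/\rho$ needs $0\le y_i^{k+1}\le y_i^k$, which holds because $0\le(\proj_{\mathcal Y}(\u))_i\le(u_i)_+$.

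Your concavity idea can give the same inequality if used differently. Apply the gradient inequality at $\y^{k+1}$ and test it against $\bz$. This gives the bound with $\max\{G(\x^{k+1}),-\y^{k+1}/\rho\}$ in place of $\q^{k+1}$, and $\max\{G_i,-y_i^{k+1}/\rho\}\ge q_i^{k+1}$ by the same monotonicity fact.

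Next, test the projection inequality for $\y^{k+1}$ at $\bz\in\mathcal Y$. This yields
\[
\langle\y^{k+1},\q^{k+1}\rangle-\frac{r_{\y}}{2}\|\y^{k+1}\|^2\ge\frac{1}{2\alpha}\bigl(\|\y^{k+1}\|^2-\|\y^k\|^2\bigr)+\frac12(\alpha^{-1}-r_{\y})\|\y^{k+1}-\y^k\|^2+\frac{r_{\y}}{2}\|\y^k\|^2 ,
\]
which is exactly where $r_{\y}\le 1/\alpha$ is used. Insert both bounds into $F(\x^{k+1},\y^{k+1},\z^{k+1})\le\Phi^0$ and sum over $k=0,\ldots,K-1$. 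This gives $\frac{\rho}{2}\sum_{k<K}\|G(\x^{k+1})_+\|^2\le K(\Phi^0-f_{\min})$, and your Step~3 then applies unchanged.
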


\begin{remark} \label{rmk:near-feasible-no-cq}  
The near-feasibility estimate is independent of the local regularity condition
in Assumption~\ref{ass:ucq}. It follows from the penalty term and the Lyapunov
counting argument, before any constraint qualification is invoked. The local
regularity condition is used only afterward: Once a selected good iterate has
entered the region \(\mathcal R_{\delta_{\rm cq}}\), the primal optimality
condition together with Assumption~\ref{ass:ucq} yields a uniform bound on the
associated multiplier vector. This bound is then used in
Proposition~\ref{prop:dual-bd-ybd-linearized} to choose \(R_{\y}\) so that the
projection onto \(\mathcal Y\) is inactive at the selected good iterates.
\end{remark}

\begin{proposition}[Inactive artificial dual truncation]
\label{prop:dual-bd-ybd-linearized}
Suppose that the assumptions of Proposition~\ref{prop:primalbd} hold. Let
\(\xi>0\)  and choose \(\delta\in(0,\delta_{\rm cq}]\) such that
$
\sqrt{\lambda\xi}\le \delta_{\rm cq}$.
Assume that
\begin{equation}\label{eq:rho-K-good}
\rho>
\frac{2(\Phi^0-f_{\min})}{\delta^2} 
\quad \text{and} \quad
N_K:=K-
\frac{7(\Phi^0-f_{\min})}{2\beta\xi}
-
\frac{2K(\Phi^0-f_{\min})}{\rho\delta^2}>0.
\end{equation}
If
\begin{equation}\label{eq:ryrequire}
R_{\y}>
\rho R_{\x}
+
\frac{
\sqrt{d}(L+(\sqrt{r_{\x}}+\sqrt{\lambda^{-1}})\sqrt{\xi})
}{
\bar\sigma
}
+
\alpha\sqrt{d}\delta,
\end{equation}
then
$
\|\y^k\|_1<R_{\y}$ and $\|\y^{k+1}\|_1<R_{\y}$
hold for at least
$
K
-
\frac{7(\Phi^0-f_{\min})}{2\beta\xi}
-
\frac{2K(\Phi^0-f_{\min})}{\rho\delta^2}$
indices \(k\). 
\end{proposition}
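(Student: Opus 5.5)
The plan is to intersect two counting estimates, one from the sufficient decrease inequality \eqref{eq:suff-decrease} and one from Proposition~\ref{prop:primalbd}. Then, on the resulting good indices, we use the primal optimality condition of the prox-linear subproblem together with Assumption~\ref{ass:ucq} to bound the relevant multipliers. Finally, we transfer this bound to \(\y^k\) and then to \(\y^{k+1}\).

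\emph{Step 1: counting small residuals.} Summing \eqref{eq:suff-decrease} over \(k=0,\ldots,K-1\) and using \(\Phi^K\ge f_{\min}\) (the lower bound already used in Proposition~\ref{prop:primalbd}) gives
\[
\sum_{k=0}^{K-1}\left(\|\x^k-\x^{k+1}\|^2+\|\z^k-\x^{k+1}\|^2\right)\le \frac{\Phi^0-f_{\min}}{c_\beta}.
\]
For small \(\beta\) we have \(c_\beta=\frac{2r_{\x}\beta}{7}\). The definitions of \(c_\beta\) and \(\beta\) also give \(\frac{3}{16\lambda}\ge \frac{2r_{\x}\beta}{7}\). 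I will therefore count the indices where
\[
\lambda^{-1}\|\x^k-\x^{k+1}\|^2>\xi \quad\text{or}\quad r_{\x}\|\z^k-\x^{k+1}\|^2>\xi .
\]
A Markov-type argument bounds this count by \(\frac{7(\Phi^0-f_{\min})}{2\beta\xi}\). The exact normalization of the two residuals is the bookkeeping to verify; the target is that outside this set
\[
\|\x^{k+1}-\x^k\|\le\sqrt{\lambda\xi}\le\delta_{\rm cq}
\quad\text{and}\quad
r_{\x}\|\x^{k+1}-\z^k\|\le\sqrt{r_{\x}\xi}.
\]
Proposition~\ref{prop:primalbd} removes at most \(\frac{2K(\Phi^0-f_{\min})}{\rho\delta^2}\) further indices, leaving at least \(N_K\) indices with \(\|G(\x^{k+1})_+\|\le\delta\le\delta_{\rm cq}\). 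Thus \(\x^{k+1}\in\mathcal R_{\delta_{\rm cq}}\) at these indices.

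\emph{Step 2: multiplier bound at good indices.} Let \(\mu_i:=(y_i^k+\rho h_i(c_i^k(\x^{k+1})))_+\ge 0\). By the chain rule for \((\cdot)_+^2\) composed with the convex \(h_i\circ c_i^k\), the optimality condition of the \(\x\)-update reads
\[
\bz\in \nabla c_0(\x^k)\s_0+\tfrac1\lambda(\x^{k+1}-\x^k)+r_{\x}(\x^{k+1}-\z^k)+\sum_{i=1}^d\mu_i\g_i+\mathcal N_{\mathcal X}(\x^{k+1}).
\]
Here \(\s_i\in\partial h_i(c_i^k(\x^{k+1}))\) and \(\g_i=\nabla c_i(\x^k)\s_i\). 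This is exactly the configuration of Assumption~\ref{ass:ucq} with \(\x=\x^{k+1}\) and \(\bar\x=\x^k\). Using \(\|\nabla c_0(\x^k)\s_0\|\le L\) (via \(L\ge L_hL_c\)-type bounds on the model subgradient) and the residual bounds from Step 1, we obtain
\[
\bar\sigma\|\bm\mu\|\le L+\sqrt{\xi/\lambda}+\sqrt{r_{\x}\xi}.
\]
Hence
\[
\|\bm\mu\|_1\le \frac{\sqrt d\left(L+(\sqrt{r_{\x}}+\sqrt{\lambda^{-1}})\sqrt\xi\right)}{\bar\sigma}.
\]
Since \(\mu_i\ge y_i^k-\rho|h_i(c_i^k(\x^{k+1}))|\), the definition of \(R_{\x}\) in Assumption~\ref{ass:basic} gives \(\|\y^k\|_1\le\|\bm\mu\|_1+\rho R_{\x}\). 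By \eqref{eq:ryrequire} this is strictly less than \(R_{\y}\).

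\emph{Step 3: passing to \(\y^{k+1}\).} Consider the pre-projection point \(\y^k+\alpha(\max\{G(\x^{k+1}),-\y^k/\rho\}-r_{\y}\y^k)\). Since \(r_{\y}\alpha\le1\), each of its components is at most \(y_i^k+\alpha G_i(\x^{k+1})_+\). Its positive part therefore has \(\ell_1\)-norm at most
\[
\|\y^k\|_1+\alpha\|G(\x^{k+1})_+\|_1\le \|\y^k\|_1+\alpha\sqrt d\delta<R_{\y}.
\]
Projection onto \(\mathcal Y\) of a vector whose positive part lies strictly inside the \(\ell_1\)-ball is just that positive part. Hence \(\|\y^{k+1}\|_1<R_{\y}\).

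The main obstacle I anticipate is Step 1's constant bookkeeping. The count must produce exactly \(\frac{7(\Phi^0-f_{\min})}{2\beta\xi}\), and the same index set must control both \(\|\x^k-\x^{k+1}\|\) and \(\|\z^k-\x^{k+1}\|\) in the scalings that appear in \eqref{eq:ryrequire}. I will first try to use \(c_\beta=\frac{2r_{\x}\beta}{7}\) for both terms, relying on \(\frac{3}{16\lambda}\ge\frac{2r_{\x}\beta}{7}\) from the step-size restriction on \(\beta\).
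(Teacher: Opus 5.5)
Your Steps 2 and 3 reproduce the paper's argument: the multipliers $(y_i^k+\rho h_i(c_i^k(\x^{k+1})))_+$, Assumption~\ref{ass:ucq} applied at $(\bar\x,\x)=(\x^k,\x^{k+1})$, the bound $\|\y^k\|_1\le\sqrt d\,\|\bm\mu\|+\rho R_{\x}$, and the componentwise bound on the pre-projection point. However, Step 1 as you plan it does not deliver the stated count.

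From \eqref{eq:suff-decrease} alone you only get $\sum_{k}\|\x^k-\x^{k+1}\|^2\le(\Phi^0-f_{\min})/c_\beta$, with $c_\beta\le\frac{2r_{\x}\beta}{7}$. So even in the favorable case $c_\beta=\frac{2r_{\x}\beta}{7}$, Markov's inequality bounds the number of indices with $\lambda^{-1}\|\x^k-\x^{k+1}\|^2>\xi$ only by $\frac{7(\Phi^0-f_{\min})}{2r_{\x}\lambda\beta\xi}$. This is at most $\frac{7(\Phi^0-f_{\min})}{2\beta\xi}$ precisely when $r_{\x}\lambda\ge1$. The hypotheses do not give this: $\lambda\le 1/L_G$ is only an upper bound and $r_{\x}$ is only bounded below, so $\lambda<1/r_{\x}$ is admissible. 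The inequality $\frac{3}{16\lambda}\ge\frac{2r_{\x}\beta}{7}$ you intend to rely on is equivalent to $r_{\x}\lambda\le\frac{21}{32\beta}$. That is an upper bound on $r_{\x}\lambda$, whereas the count needs the lower bound $r_{\x}\lambda\ge1$. Moreover, neither that inequality nor the identity $c_\beta=\frac{2r_{\x}\beta}{7}$ follows from the stated restrictions on $\beta$. Without $\lambda^{-1}\|\x^{k+1}-\x^k\|^2\le\xi$ on the counted indices you lose two things: the bound $\|\x^{k+1}-\x^k\|\le\sqrt{\lambda\xi}\le\delta_{\rm cq}$, which is what licenses Assumption~\ref{ass:ucq}, and the term $\sqrt{\xi/\lambda}$ that matches \eqref{eq:ryrequire}.

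The missing ingredient is to not collapse the decrease into $c_\beta$. The proof of Proposition~\ref{prop:suff-decrease} yields, in case~(i) and in both branches of case~(ii), the per-term estimate
\[
\Phi^k-\Phi^{k+1}\ge\frac{3}{16\lambda}\|\x^k-\x^{k+1}\|^2+\frac{1}{16\alpha}\|\y^k-\y_+^k(\z^k)\|^2+\frac{2r_{\x}\beta}{7}\|\z^k-\x^{k+1}\|^2 .
\]
Since $\beta\le\frac{1}{28}$, one has $\frac{3}{16\lambda}\ge\frac{2\beta}{7}\lambda^{-1}$ and $\frac{1}{16\alpha}\ge\frac{2\beta}{7}\alpha^{-1}$. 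Hence $\Phi^k-\Phi^{k+1}\ge\frac{2\beta}{7}\bigl[\lambda^{-1}\|\x^k-\x^{k+1}\|^2+r_{\x}\|\x^{k+1}-\z^k\|^2\bigr]$. Summing with $\Phi^K\ge f_{\min}$ and applying Markov's inequality to the maximum of the two weighted residuals gives exactly $\frac{7(\Phi^0-f_{\min})}{2\beta\xi}$; since this is a single sum, the union of the two bad events is counted once. This is what the paper does, and with this replacement your Steps 2 and 3 go through as written.

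A minor point: $\|\nabla c_0(\x^k)\s_0\|\le L$ does not follow from $L\ge L_hL_c$, because $L_c$ is the Lipschitz constant of the Jacobian rather than a bound on it. One needs $L\ge L_h\sup_{\x\in\cX}\|\nabla c_0(\x)\|$, which is the enlargement of $L$ made in \eqref{eq:gd_upper}.
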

\begin{remark}
The initialization in Algorithm~\ref{alg} yields an upper 
bound on $\Phi^0$ that is independent of $\rho$ and $R_{\y}$. Since $\x^0$ is 
feasible and $\y^0 = \bz$, $\z^0 = \x^0$, we have 
$F(\x^0, \y^0, \z^0) = \mathcal{L}_\rho(\x^0, \bz) = f(\x^0)$. Moreover, feasibility 
of $\x^0$ implies $\mathcal{L}_\rho(\x^0, \y) \le f(\x^0)$ for every $\y \in \mathcal{Y}$. 
Hence
\[
p(\z^0) = \max_{y \in \mathcal{Y}} d(\y, \x^0) 
\le \max_{\y \in \mathcal{Y}} \mathcal{L}_\rho(\x^0, \y) \le f(\x^0),
\quad \text{and} \quad
d(\y^0, \z^0) \ge f_{\min}.
\]
Therefore $\Phi^0 = F(\x^0,\y^0,\z^0) + 2\bigl(p(\z^0) - d(\y^0,\z^0)\bigr) \le 3f(\x^0) - 2f_{\min}$. 
In particular, $\Phi^0 - f_{\min} \le 3\,(f(\x^0) - f_{\min})$ is independent of 
$\rho$ and $R_{\y}$, so the conditions \eqref{eq:rho-K-good} and \eqref{eq:ryrequire} 
can be satisfied by choosing $\rho$ first and then $R_{\y}$.
\end{remark}

We now state the main iteration complexity result.

\begin{theorem}
\label{thm:general}
Suppose that the assumptions of Proposition
\ref{prop:dual-bd-ybd-linearized} hold. 
Fix
\(K\in\mathbb N\), \(\delta\in(0,\delta_{\rm cq}]\), and \(\xi>0\).
Then there exists an index \(k\in\{0,\ldots,K-1\}\) such that
$
(\x^{k+1},\y^{k+1})$
is an \(\mathcal O(\varepsilon_K)\)-KKT point, 
where
$
\varepsilon_K
:=
\max\left\{
r_{\y},
\sqrt{
\frac{\Phi^0-f_{\min}}{c_\beta N_K}
}
\right\}$.
In particular there exists
\(k\in\{0,\ldots,K-1\}\) such that $(\x^{k+1},\y^{k+1})$
is an
\begin{enumerate}[label=\normalfont(\roman*)]
\item   \(\mathcal O(K^{-1/3})\)-KKT point if \(r_{\y}=\Theta(K^{-1/3})\),
\(\beta=\Theta(K^{-1/3})\) and \(\xi=\Theta(K^{-2/3})\);
\item 
\(\mathcal O(K^{-1/2})\)-KKT point if \(r_{\y}=0\), \(\beta=\Theta(1)\) and
\(\xi=\Theta(1)\).
\end{enumerate}
\end{theorem}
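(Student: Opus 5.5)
The argument combines the counting estimates of Propositions~\ref{prop:suff-decrease}, \ref{prop:primalbd} and \ref{prop:dual-bd-ybd-linearized}, and then converts algorithmic residuals into the two KKT residuals of Definition~\ref{defi:primal-dual}.

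\textbf{Step 1 (counting).} Summing \eqref{eq:suff-decrease} over $k=0,\dots,K-1$ and using $\Phi^k\ge f_{\min}$ gives
\[
\sum_{k=0}^{K-1} c_\beta\, e_k \le \Phi^0-f_{\min},
\qquad
e_k:=\|\x^k-\x^{k+1}\|^2+\|\y^k-\y_+^k(\z^k)\|^2+\|\z^k-\x^{k+1}\|^2 .
\]
Let $\mathcal G$ be the set of indices produced by Proposition~\ref{prop:dual-bd-ybd-linearized}, so that $|\mathcal G|\ge N_K>0$. On $\mathcal G$ we have $\|G(\x^{k+1})_+\|\le\delta$ and both $\y^k,\y^{k+1}$ lie strictly inside the $\ell_1$-ball. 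By pigeonhole there is $k\in\mathcal G$ with
\[
e_k\le \frac{\Phi^0-f_{\min}}{c_\beta N_K}.
\]
Hence each of the three residuals at this index is at most $\sqrt{(\Phi^0-f_{\min})/(c_\beta N_K)}$.

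\textbf{Step 2 (primal stationarity).} Bound $\|\nabla_\z d(\y^{k+1},\x^{k+1})\| = r_\x\|\x^{k+1}-\x(\y^{k+1},\x^{k+1})\|$. Use the optimality condition of the prox-linear subproblem: $\x^{k+1}$ is an exact minimizer of a model that differs from $F(\cdot,\y^k,\z^k)$ by a linearization error of order $\|\x-\x^k\|^2$, together with the extra terms $\frac1{2\lambda}\|\x-\x^k\|^2$ and $\frac{r_\x}{2}\|\x-\z^k\|^2$ versus $\frac{r_\x}{2}\|\x-\x^{k+1}\|^2$. Strong convexity of $F(\cdot,\y,\z)$ with modulus $r_\x-L_\rho$ then gives a bound of order $\|\x^k-\x^{k+1}\|+\|\z^k-\x^{k+1}\|$ for $\|\x^{k+1}-\x(\y^k,\x^{k+1})\|$. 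Lipschitz dependence of $\x(\cdot,\z)$ on $\y$ (constant $\sigma_2$) passes from $\y^k$ to $\y^{k+1}$. For that step use
\[
\|\y^{k+1}-\y^k\|\le \|\y^k-\y_+^k(\z^k)\| + \alpha\|\nabla_\y d(\y^k,\z^k)-\nabla_\y F(\x^{k+1},\y^k,\z^k)\|,
\]
where the last term is at most $\alpha L_G\|\x^{k+1}-\x(\y^k,\z^k)\|$ and is controlled as above. This yields $\mathcal O(\sqrt{e_k})$.

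\textbf{Step 3 (dual/KKT residual).} Since $\|\y^{k+1}\|_1<R_\y$ on $\mathcal G$, the normal cone of $\mathcal Y$ at $\y^{k+1}$ coincides with $\mathcal N_{\R^d_+}(\y^{k+1})$. The projection step then reads
\[
\frac{\y^k-\y^{k+1}}{\alpha}+\nabla_\y\mathcal L_\rho(\x^{k+1},\y^k)-r_\y\y^k\in\mathcal N_{\R^d_+}(\y^{k+1}).
\]
Consequently,
\[
\dist\bigl(\bz,-\nabla_\y\mathcal L_\rho(\x^{k+1},\y^{k+1})+\mathcal N_{\R^d_+}(\y^{k+1})\bigr)
\le \Bigl(\tfrac1\alpha+\tfrac1\rho\Bigr)\|\y^{k+1}-\y^k\|+r_\y R_\y .
\]
Here $\frac1\rho$ is the Lipschitz constant of $\nabla_\y\mathcal L_\rho$ in $\y$. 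Together with Step~2 this gives $\mathcal O(\max\{r_\y,\sqrt{e_k}\})=\mathcal O(\varepsilon_K)$, with $R_\y$ treated as a constant fixed by \eqref{eq:ryrequire}.

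\textbf{Step 4 (rates).} Choose parameters so that \eqref{eq:rho-K-good} holds with $N_K\ge K/2$.
\begin{enumerate}[label=\normalfont(\roman*)]
\item For $r_\y>0$: Proposition~\ref{prop:suff-decrease}(i) requires $\beta\lesssim 1/\omega_1^2\sim r_\y$, so $\beta=\Theta(K^{-1/3})$ and $c_\beta\sim\beta$. Taking $\rho$ large, the condition $N_K\ge K/2$ needs $K\gtrsim 1/(\beta\xi)=K^{1/3}K^{2/3}=K$, which holds with suitable constants. Then $\sqrt{1/(c_\beta N_K)}\sim K^{-1/3}$, matching $r_\y$. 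Since $\xi$ enters \eqref{eq:ryrequire} only through $\sqrt\xi\to0$, $R_\y$ stays bounded.
\item For $r_\y=0$: Proposition~\ref{prop:suff-decrease}(ii) allows constant $\beta$ and $c_\beta$, and $\xi=\Theta(1)$, giving $\sqrt{1/N_K}\sim K^{-1/2}$.
\end{enumerate}

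\textbf{Main obstacle.} I expect the delicate step to be Step~2, namely the precise transfer from the prox-linear model optimality at $(\y^k,\z^k)$ to $\nabla_\z d(\y^{k+1},\x^{k+1})$ with only $\mathcal O(\sqrt{e_k})$ loss. I would follow the corresponding residual lemma in \cite{li2025nonsmooth}, using $\lambda\le 1/L_G$ to absorb the linearization error.
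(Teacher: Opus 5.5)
Your proposal is correct and follows essentially the same route as the paper. You pigeonhole over the good index set of Proposition~\ref{prop:dual-bd-ybd-linearized} using the summed sufficient decrease, then apply the residual-to-KKT conversion of Lemma~\ref{lemma-episolcol}: the primal residual is controlled through the solution-map Lipschitz bounds, and the dual residual through the projection optimality condition with $\mathcal N_{\mathcal Y}(\y^{k+1})=\mathcal N_{\R^d_+}(\y^{k+1})$. The rate bookkeeping is also the same, and you make it more explicit than the paper by requiring $N_K\ge K/2$. Your ``main obstacle'' in Step~2 is already handled in the paper: the bound on $\|\x^{k+1}-\x(\y^k,\x^{k+1})\|$ is Lemma~\ref{prop:lip} combined with the $\sigma_1$-estimate of Lemma~\ref{lemma-sollip}, and it does not rely on $\lambda\le 1/L_G$.
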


\begin{remark}[Equality constraints]
The present paper focuses on inequality constraints and does not directly cover general equality constraints. The reason is that equality
multipliers are signed, while our prox-linear augmented Lagrangian model uses the nonnegativity of inequality multipliers to preserve convexity of the primal subproblem. An analogous treatment is possible for equality constraints whose
linearized augmented Lagrangian subproblems remain convex, such as
standard smooth equality constraints. Our analysis can be adapted to this setting, and the resulting argument is
simpler in several respects: There is no complementarity condition, no nonnegative
dual cone, and no active-set switching. The corresponding multiplier
control would rely on a signed constraint qualification condition. We
omit this extension to keep the presentation focused on nonsmooth
nonconvex functional inequalities.
\end{remark}

\section{Proofs of Main Results}

\subsection{Proof of Proposition \ref{prop:dual_eb_local}}

We first establish two auxiliary lemmas. The first shows that the saddle-point
set of the compact-dual minimax problem coincides with the KKT pairs of the
original constrained problem whose multipliers lie in the compact dual set;
see Lemma~\ref{lem:S-eq-KKT}. The second shows that, near such a saddle pair,
the augmented active regime is locally stable. A similar result was stated without proof in Proposition~3.1 of \cite{xu2017first}. These lemmas allow us to reduce
the local dual error bound to a sensitivity estimate for a fixed-index lifted
epigraphical nonlinear program.

\begin{proof}[Proof of Lemma \ref{lem:S-eq-KKT}]
Let \((\x^\star,\y^\star)\in\mathcal S\). Since
\(\y^\star\in\mathcal Y(\x^\star)\), the first-order optimality condition
for maximizing the concave function \(d(\cdot,\x^\star)\) over
\(\mathcal Y\) gives
$\bz
\in
-\nabla_{\y} d(\y^\star,\x^\star)
+
\mathcal N_{\mathcal Y}(\y^\star)$.
Moreover, since \(\x^\star=\x(\y^\star,\x^\star)\), Danskin's theorem gives
\begin{equation}
\label{eq:S-grad}
(\nabla_{\y} d(\y^\star,\x^\star))_i
=
\max\left\{
G_i(\x^\star),-\frac{y_i^\star}{\rho}
\right\},
\qquad i\in[d].
\end{equation}
Suppose, to the contrary, that \(\|\y^\star\|_1=R_{\y}\). Then
\[
\mathcal N_{\mathcal Y}(\y^\star)
=
\{\vartheta\mathbf 1-\bm\eta:
\vartheta\ge0,\ \bm\eta\ge\bz,\ \eta_i y_i^\star=0,\ i\in[d]\}.
\]
Thus 
$
\nabla_{\y} d(\y^\star,\x^\star)=\vartheta\mathbf 1-\bm\eta$. 

For every \(i\in\operatorname{supp}(\y^\star)\), we have
\(\eta_i=0\), and therefore
$(\nabla_{\y} d(\y^\star,\x^\star))_i=\vartheta\ge0.
$
Since \(-y_i^\star/\rho<0\), \eqref{eq:S-grad} forces
$
G_i(\x^\star)=\vartheta\ge0$ for
$
i\in\operatorname{supp}(\y^\star).
$
Since \(\x^\star=\x(\y^\star,\x^\star)\), the proximal term has zero
gradient at \(\x=\x^\star\). By Assumption~\ref{ass:struc-regularity}(iii),
\(\mathcal N_{\mathcal X}(\x^\star)=\{\bz\}\). Hence the primal
stationarity condition gives coefficients
$\mu_{0\ell}\ge0,
\sum_{\ell\in\mathcal A_0^\star}\mu_{0\ell}=1,
$
and, for each \(i\), 
$
\mu_{i\ell}\ge0,
\sum_{\ell\in\mathcal A_i^\star}\mu_{i\ell}=1,
$
such that, with
$
m_i:=(y_i^\star+\rho G_i(\x^\star))_+,
$
we have
\begin{equation}
\label{eq:S-primal-stat}
\bz
=
\sum_{\ell\in\mathcal A_0^\star}
\mu_{0\ell}\nabla\phi_{0\ell}(\x^\star)
+
\sum_{i:\,m_i>0}
\sum_{\ell\in\mathcal A_i^\star}
m_i\mu_{i\ell}\nabla\phi_{i\ell}(\x^\star).
\end{equation}
We claim that every index with \(m_i>0\) belongs to
\(\mathcal I_+(\x^\star)\). Indeed, if \(m_i>0\) and
\(G_i(\x^\star)<0\), then
$
y_i^\star>-\rho G_i(\x^\star)>0.
$ When \(i\in\operatorname{supp}(\y^\star)\),  the previous analysis gives
\(G_i(\x^\star)=\vartheta\ge0\), leading to a contradiction. Hence,  $m_i>0$ implies that $G_i(\x^\star)\ge0$.

Fix \(\ell_0\in\mathcal A_0^\star\). Using
$
\mu_{0\ell_0}
=
1-\sum_{\ell\in\mathcal A_0^\star\setminus\{\ell_0\}}\mu_{0\ell}$,
we rewrite \eqref{eq:S-primal-stat} as
\[
\begin{aligned}
&
\sum_{\ell\in\mathcal A_0^\star\setminus\{\ell_0\}}
\mu_{0\ell}
\bigl(
\nabla\phi_{0\ell}(\x^\star)
-\nabla\phi_{0\ell_0}(\x^\star)
\bigr)
+
\sum_{i:\,m_i>0}
\sum_{\ell\in\mathcal A_i^\star}
m_i\mu_{i\ell}\nabla\phi_{i\ell}(\x^\star)
=
-\nabla\phi_{0\ell_0}(\x^\star).
\end{aligned}
\]
Here, without loss of generality, assuming that  
$
L
\ge
\sup_{\x\in\mathcal X}
\sup_{\s_0\in\partial h_0(c_0(\x))}
\|\nabla c_0(\x)\s_0\|$. Since all constraint columns above are included in the collection in
Assumption~\ref{ass:struc-regularity}(ii), the singular-value bound gives
\[
\left(
\sum_{\ell\in\mathcal A_0^\star\setminus\{\ell_0\}}
|\mu_{0\ell}|^2
+
\sum_{i:\,m_i>0}
\sum_{\ell\in\mathcal A_i^\star}
|m_i\mu_{i\ell}|^2
\right)^{1/2}
\le
\frac{\|\nabla\phi_{0\ell_0}(\x^\star)\|}{\tilde\sigma}
\le
\frac{L}{\tilde\sigma}.
\]
 Therefore,
\[
\sum_{i:\,m_i>0}m_i
=
\sum_{i:\,m_i>0}
\sum_{\ell\in\mathcal A_i^\star}m_i\mu_{i\ell}
\le
\frac{\sqrt{N_{\cA}}L}{\tilde\sigma}
<
R_{\y}.
\]
On the other hand, for every \(i\in\operatorname{supp}(\y^\star)\), we have
\(G_i(\x^\star)=\vartheta\ge0\), and hence
$m_i
=
(y_i^\star+\rho\vartheta)_+
=
y_i^\star+\rho\vartheta
\ge
y_i^\star.
$ Thus
$\sum_{i:\,m_i>0}m_i
\ge
\sum_{i\in\operatorname{supp}(\y^\star)}y_i^\star
=
\|\y^\star\|_1
=
R_{\y},
$
which contradicts the previous strict inequality. Hence
$
\|\y^\star\|_1<R_{\y}.
$

Now we identify the KKT condition. Since \(\|\y^\star\|_1<R_{\y}\), the
artificial boundary is inactive, and hence
$
\nabla_{\y}d(\y^\star,\x^\star)
\in
\mathcal N_{\mathbb R^d_+}(\y^\star).
$
Combining this inclusion with \eqref{eq:S-grad}, we obtain the following
componentwise relations. If \(y_i^\star>0\), then the \(i\)-th component of
\(\mathcal N_{\mathbb R^d_+}(\y^\star)\) is \(\{0\}\), so
$
\max\{
G_i(\x^\star),-\frac{y_i^\star}{\rho}
\}=0.
$
Since \(-y_i^\star/\rho<0\), this implies \(G_i(\x^\star)=0\). If
\(y_i^\star=0\), then the \(i\)-th component of
\(\mathcal N_{\mathbb R^d_+}(\y^\star)\) is \((-\infty,0]\), so
$
\max\{G_i(\x^\star),0\}\le 0,
$
and hence \(G_i(\x^\star)\le0\). Therefore,
\[
G(\x^\star)\le\bz,\qquad
y_i^\star G_i(\x^\star)=0,\qquad
(y_i^\star+\rho G_i(\x^\star))_+=y_i^\star,
\qquad i\in[d].
\]
Because \(\x^\star=\x(\y^\star,\x^\star)\), the optimality condition of
\(\x^\star\) for \(F(\cdot,\y^\star,\x^\star)\) gives
\[
\bz
\in
\partial f(\x^\star)
+
\sum_{i=1}^d
(y_i^\star+\rho G_i(\x^\star))_+\partial G_i(\x^\star)
+
\mathcal N_{\mathcal X}(\x^\star).
\]
Using
$
(y_i^\star+\rho G_i(\x^\star))_+=y_i^\star$ for all $i\in[d]$,
and
\(\mathcal N_{\mathcal X}(\x^\star)=\{\bz\}\) by
Assumption~\ref{ass:struc-regularity}(iii), we obtain
$
\bz
\in
\partial f(\x^\star)
+
\sum_{i=1}^d y_i^\star\partial G_i(\x^\star).
$
Together with feasibility, dual feasibility, and complementary slackness, this
is precisely the KKT system of~\eqref{eq:problem}.

Conversely, let \((\x^\star,\y^\star)\) be a KKT pair of~\eqref{eq:problem} with
\(\y^\star\in\mathcal Y\). Feasibility and complementarity give
$
(y_i^\star+\rho G_i(\x^\star))_+=y_i^\star$, $i\in[d]$.
Thus the KKT stationarity condition coincides with the optimality
condition for minimizing \(F(\cdot,\y^\star,\x^\star)\) over
\(\mathcal X\). Since \(F(\cdot,\y^\star,\x^\star)\) is strongly convex,
$
\x^\star=\x(\y^\star,\x^\star)$.
Moreover, by \eqref{eq:S-grad} and complementarity,
$
\nabla_{\y} d(\y^\star,\x^\star)=\bz$.
Hence
$
\bz\in
-\nabla_{\y} d(\y^\star,\x^\star)
+
\mathcal N_{\mathcal Y}(\y^\star)$,
and by concavity of \(d(\cdot,\x^\star)\), this implies
$
\y^\star\in\mathcal Y(\x^\star)$.
Therefore \((\x^\star,\y^\star)\in\mathcal S\). The proof is complete.
\end{proof}

\begin{lemma}[Local stability of the active set]
\label{lemma:active_constant}
Let \((\x^\star,\y^\star)\in\mathcal S\) and suppose that the conditions of
Lemma~\ref{lem:S-eq-KKT} hold. 
Then there exists \(\delta>0\) such that, for any \((\x,\y)\) satisfying
$
\max\{\|\x-\x^\star\|,\|\y-\y^\star\|\}\leq \delta$,
we have
\[
\mathcal{I}_\rho(\x,\y)=
\mathcal{I}(\x^\star),
\]
where
$
\mathcal{I}_\rho(\x,\y)
:=
\left\{
i\in[d]:
\rho G_i(\x)+y_i>0
\right\}$.
\end{lemma}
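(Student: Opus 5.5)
The plan is to combine Lemma~\ref{lem:S-eq-KKT} with a continuity argument, splitting the indices into two groups. By Lemma~\ref{lem:S-eq-KKT}, \((\x^\star,\y^\star)\) is a KKT pair, so \(G(\x^\star)\le\bz\) and \(y_i^\star G_i(\x^\star)=0\) for every \(i\in[d]\). Then \([d]\) splits into the active indices \(\mathcal I(\x^\star)\), where \(G_i(\x^\star)=0\), and the inactive indices \([d]\setminus\mathcal I(\x^\star)\), where \(G_i(\x^\star)<0\). Complementarity forces \(y_i^\star=0\) on the inactive indices. Assumption~\ref{ass:struc-regularity}(i) gives \(y_i^\star>0\) on the active ones. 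Consequently the function \(\psi_i(\x,\y):=\rho G_i(\x)+y_i\) satisfies
\[
\psi_i(\x^\star,\y^\star)=y_i^\star>0 \quad\text{for } i\in\mathcal I(\x^\star),
\qquad
\psi_i(\x^\star,\y^\star)=\rho G_i(\x^\star)<0 \quad\text{for } i\notin\mathcal I(\x^\star).
\]
So every \(\psi_i\) is strictly nonzero at the saddle pair, with sign exactly indicating membership in \(\mathcal I(\x^\star)\).

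Next I will use continuity to preserve these signs nearby. Each \(G_i=h_i\circ c_i\) is \(L\)-Lipschitz on \(\mathcal X\) by Assumption~\ref{ass:basic}(ii). Hence for \(\x\in\mathcal X\),
\[
|\psi_i(\x,\y)-\psi_i(\x^\star,\y^\star)|\le \rho L\|\x-\x^\star\|+|y_i-y_i^\star|\le(\rho L+1)\delta
\]
whenever \(\max\{\|\x-\x^\star\|,\|\y-\y^\star\|\}\le\delta\). Set
\[
\gamma:=\min\Bigl\{\min_{i\in\mathcal I(\x^\star)}y_i^\star,\ \min_{i\notin\mathcal I(\x^\star)}\rho|G_i(\x^\star)|\Bigr\}>0,
\]
with the convention that a minimum over an empty set is \(+\infty\). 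I will then choose \(\delta<\gamma/(\rho L+1)\). For every \(i\), the sign of \(\psi_i(\x,\y)\) then agrees with the sign of \(\psi_i(\x^\star,\y^\star)\), which gives \(\mathcal I_\rho(\x,\y)=\mathcal I(\x^\star)\). If \(\x\) is allowed outside \(\mathcal X\), I will instead use continuity of \(G\), with a global Lipschitz bound coming from \(h_i\) Lipschitz and \(c_i\in C^2\) on a compact neighborhood. Alternatively, interiority from Assumption~\ref{ass:struc-regularity}(iii) lets me shrink \(\delta\) so that the ball stays inside \(\mathcal X\).

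The main point requiring care is the first step: the strict sign separation must hold at every index. This is exactly where Lemma~\ref{lem:S-eq-KKT} is needed. It shows that the artificial truncation is inactive, so that complementarity with respect to \(\R^d_+\) holds. Together with the strict complementarity of Assumption~\ref{ass:struc-regularity}(i), this rules out degenerate indices with \(G_i(\x^\star)=0\) and \(y_i^\star=0\), at which the active set could switch under arbitrarily small perturbations. Once this separation is in place, the rest of the argument is routine.
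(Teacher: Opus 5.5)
Your proposal is correct and follows the paper's proof. Both arguments use Lemma~\ref{lem:S-eq-KKT} together with strict complementarity (Assumption~\ref{ass:struc-regularity}(i)) to show that $\rho G_i(\x^\star)+y_i^\star$ is strictly positive on $\mathcal I(\x^\star)$ and strictly negative off it, and then preserve these signs with the perturbation bound $(\rho L+1)\delta$ below the separation margin; the paper uses $L_G$ and an extra factor $\tfrac12$, which changes nothing.
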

\begin{proof}
By Lemma~\ref{lem:S-eq-KKT}, \((\x^\star,\y^\star)\) is a KKT pair of
problem~\eqref{eq:problem}. Hence \(G_i(\x^\star)\le0\), \(y_i^\star\ge0\),
and \(y_i^\star G_i(\x^\star)=0\) for all \(i\in[d]\). Together with
Assumption~\ref{ass:struc-regularity}(i), this implies that, for every
\(i\in\mathcal I(\x^\star)\), we have \(G_i(\x^\star)=0\) and
\(y_i^\star>0\), while for every \(i\notin\mathcal I(\x^\star)\), we have
\(G_i(\x^\star)<0\) and \(y_i^\star=0\).
Thus, the scalar \(\rho G_i(\x^\star)+y_i^\star\) is strictly positive
for \(i\in\mathcal I(\x^\star)\) and strictly negative for
\(i\notin\mathcal I(\x^\star)\).
Choose \(\delta>0\) sufficiently small such that
\[
(\rho L_G+1)\delta
<
\frac12
\min\left\{
\min_{i\in\mathcal I(\x^\star)} y_i^\star,\;
\min_{i\notin\mathcal I(\x^\star)} \bigl(-\rho G_i(\x^\star)\bigr)
\right\},
\]
where an empty minimum is ignored. Then, whenever
\(\max\{\|\x-\x^\star\|,\|\y-\y^\star\|\}\le\delta\), we have
\[
\left|
\bigl(\rho G_i(\x)+y_i\bigr)
-
\bigl(\rho G_i(\x^\star)+y_i^\star\bigr)
\right|
\le
(\rho L_G+1)\delta .
\]
Therefore the sign of \(\rho G_i(\x)+y_i\) is the same as the sign of
\(\rho G_i(\x^\star)+y_i^\star\) for every \(i\in[d]\), 
which proves \(\mathcal I_\rho(\x,\y)=\mathcal I(\x^\star)\).
\end{proof}

\begin{proof}[Proof of Proposition \ref{prop:dual_eb_local}]
 
For simplicity, we denote $\cI:=\cI(\x^\star)$. For any \(i\in[d]\) and  \(\x\), let
$\mathcal A_0(\x):=\{\ell\in\mathcal A_0:\phi_{0\ell}(\x)=f(\x)\}$ and  $\mathcal A_i(\x):=\{\ell\in\mathcal A_i:\phi_{i\ell}(\x)=G_i(\x)\}$. In this notation, \(\mathcal A_i^\star=\mathcal A_i(\x^\star)\) for
\(i=0,1,\ldots,d\).
By Lemma~\ref{lem:S-eq-KKT}, $(\x^\star,\y^\star)$ is a KKT pair of
\eqref{eq:problem} with
$\|\y^\star\|_1\le\sqrt{N_{\cA}}L/\tilde\sigma<R_{\y}$; in particular
the artificial boundary of $\mathcal Y$ is inactive at $\y^\star$.

Let \(\mathcal U\) be a neighborhood of \(\x^\star\). For each
\(\z\in\mathcal U\), let \((\x^\star(\z),\y(\z))\) denote the local
saddle pair satisfying
$
(\x^\star(\z),\y(\z))\to(\x^\star,\y^\star)
$ as $ \z\to\x^\star .
$ After shrinking the neighborhood of \(\x^\star\), Assumption \ref{ass:struc-regularity}(i), Lemma~\ref{lemma:active_constant}, and the local
Lipschitz continuity of \(\x(\cdot,\z)\) imply that there exist \(\epsilon>0\) and \(\gamma>0\), independent of \(\z\), such that for all $\y\in\mathcal Y\cap\mathbb B_\epsilon(\y(\z))$
and $\x(\y,\z)$, we have $y_i\ge\gamma>0$ for all $i\in\cI$, $\|\y\|_1<R_{\y}$, $\x(\y,\z) \in\operatorname{int}(\mathcal X)$ and $\cI_\rho(\x(\y,\z),\y) = \cI$. Since each \(h_i\) is piecewise linear with finitely many pieces, the active
piece map is upper semicontinuous. Hence, after shrinking the neighborhood of
\(\x^\star\), we have
$    \mathcal A_i(\x)\subseteq \mathcal A_i^\star,
$
for all  $i=0,1,\ldots,d$ and \(\x\) in this neighborhood. In particular, no new affine piece becomes
active near \(\x^\star\).

 Due to Danskin's Theorem, we have 
$\nabla_{\y}d(\y,\z)_{\cI} = G_{\cI}(\x(\y,\z))$
and $\nabla_{\y}d(\y,\z)_{\cI^c}
        =
        -\rho^{-1}\y_{\cI^c}$, which implies 
\begin{equation}
\label{eq:dual_bound}
\|G_{\cI}(\x(\y,\z))\| \leq \operatorname{dist}
        \left(
        \bz,
        -\nabla_{\y}d(\y,\z)+\cN_{\R_+^d}(\y)
        \right).
\end{equation}
We next prove the primal error bound
\begin{equation}
\label{eq:eb_x}
        \|\x(\y,\z)-\x(\y(\z),\z)\|
        \le
        c_{\textrm{eb}}\|G_{\cI}(\x(\y,\z))\|. 
\end{equation}
Consider the lifted epigraphical parametric nonlinear programming (NLP) as below 
\begin{equation}
\label{eq:nlp}
\begin{alignedat}{3}
\min_{\x,t_0,\t_{\cI}}\quad
& \frac{r_{\x}}{2}\|\x-\z\|^2+t_0
\\[0.5ex]
\mathrm{s.t.}\quad
& \phi_{0\ell}(\x)-t_0\le0,
&\qquad& \ell\in\mathcal A_0^\star,
\\
& \phi_{i\ell}(\x)-t_i\le0,
&& i\in\cI,\ \ell\in\mathcal A_i^\star,
\\
& t_i-r_i\le0,
&& i\in\cI .
\end{alignedat}
\end{equation}
The KKT system of \eqref{eq:nlp} is given by
\begin{equation*}
\left\{
\begin{alignedat}{2}
\bz 
&=
r_{\x}(\x-\z)
+
\sum_{\ell\in\mathcal A_0^\star}
\mu_{0\ell}\nabla\phi_{0\ell}(\x)
+
\sum_{i\in\cI}
\sum_{\ell\in\mathcal A_i^\star}
\mu_{i\ell}\nabla\phi_{i\ell}(\x),
&\qquad&
\\
1
&=
\sum_{\ell\in\mathcal A_0^\star}\mu_{0\ell},
\quad
v_i
=
\sum_{\ell\in\mathcal A_i^\star}\mu_{i\ell},
&\quad&
i\in\cI,
\\
0
&\le
\mu_{0\ell}
\perp
t_0-\phi_{0\ell}(\x)
\ge0,
&\qquad&
\ell\in\mathcal A_0^\star,
\\
0
&\le
\mu_{i\ell}
\perp
t_i-\phi_{i\ell}(\x)
\ge0,
&\qquad&
i\in\cI,\ \ell\in\mathcal A_i^\star,
\\
0
&\le
v_i
\perp
r_i-t_i
\ge0,
&\qquad&
i\in\cI .
\end{alignedat}
\right.
\end{equation*}
Set $\r:=G_{\cI}(\x(\y,\z))$. We first verify that \(\x(\y,\z)\) is the \(\x\)-component of a local KKT point of \eqref{eq:nlp} with this parameter \(\r\).
Let $v_i:=y_i+\rho r_i$ for all $i\in\cI$. By the local active regime stability, after shrinking \(\epsilon\) if necessary, we have $ v_i\ge\gamma/2>0,$ for all $i\in\cI$.  
Since the
active augmented regime is fixed, the optimality condition defining
\(\x(\y,\z)\) gives
\[
\bz \in
r_{\x}(\x(\y,\z)-\z)
+
\partial f(\x(\y,\z))
+
\sum_{i\in \cI}
\bigl(y_i+\rho G_i(\x(\y,\z))\bigr)\partial G_i(\x(\y,\z)).
\]
This gives multipliers
\(\mu_{0\ell}\ge0\) and \(\mu_{i\ell}\ge0\) satisfying the KKT conditions
above, with
$   t_0=f(\x(\y,\z))$ and $
        t_i=r_i=G_i(\x(\y,\z)).$

Now consider the unperturbed problem \eqref{eq:nlp} with \(\r=\bz\). Let \((\bar\x,\bar t_0,\bar\t_{\cI},\bm{\bar\mu},\bm{\bar v})\) be any nearby
local KKT point in the regime $\bar v_i\ge\gamma/2$ for all $i\in \cI$ and $\|(\bar{\bm{v}},\bz)\|_1<R_{\y}$.  Define \(\bm{\nu}\in\R_+^d\) by $\bm{\nu}_{\cI}:=\bar{\bm{v}},$ and $\bm{\nu}_{\cI^c}:=\bz$. Since \(\bar v_i>0\), complementarity gives $\bar t_i=0$ for all $i\in \cI$. Furthermore,
$
        \bar v_i
        =
        \sum_{\ell\in\mathcal A_i}\bar\mu_{i\ell}
        >
        0,
$ 
so for each \(i\in\cI\) at least one epigraphical multiplier
\(\bar\mu_{i\ell}\) is positive. Hence
$
        G_i(\bar\x)=\bar t_i=0,$ for all $i\in\cI$. 
The KKT stationarity of the lifted problem therefore implies 
\[
\bz \in
r_{\x}(\bar\x-\z)
+
\partial f(\bar\x)
+
\sum_{i\in\cI}\bm{\nu}_i\partial G_i(\bar\x).
\]
Equivalently, \(\bar\x\) is the optimal solution of $\min_{\x \in \cX}F(\x,\bm{\nu},\z)$ generated by the dual vector
\(\bm{\nu}\).  Moreover, it is direct to verify
$
        \bz
        \in
        -\nabla_{\y}d(\bm{\nu},\z)+\cN_{\R_+^d}(\bm{\nu}).
$ 
Since \(d(\cdot,\z)\) is concave on \(\mathcal Y\), \(\bm{\nu}\) is a dual
maximizer. By the strong convexity of the primal subproblem
\(F(\cdot,\bm{\nu},\z)\), all dual maximizers generate the same primal point. Consequently, the \(\x\)-projection of the nearby unperturbed KKT set of
\eqref{eq:nlp} is the singleton \(\{\x^\star(\z)\}\). 

We now verify the local uniform regularity of the lifted KKT system. First,
Assumption~\ref{ass:struc-regularity}(ii) can imply the reduced LICQ
for the lifted  NLP \eqref{eq:nlp}.  The active gradient rows of the lifted problem at the reference point are \[ \{(\nabla\phi_{0\ell}(\x^\star),-1,\bz):\ell\in\mathcal A_0^\star\} \cup \{(\nabla\phi_{i\ell}(\x^\star),0,-\e_i): i\in\cI,\ \ell\in\mathcal A_i^\star\} \cup \{(\bz,0,\e_i):i\in\cI\}. \]
Eliminating the \(t_0\)-component gives the differences \(\nabla\phi_{0\ell}(\x^\star)-\nabla\phi_{0\ell_0}(\x^\star)\), while eliminating the \(t_i\)-components by the rows \((0,0,\e_i)\) leaves \(\nabla\phi_{i\ell}(\x^\star)\).

We continue to verify the strong second-order sufficient condition (SSOSC) for the lifted NLP \eqref{eq:nlp}.  Write
$
        \bar\w=(\bar \x,\bar t_0,\bar \t_{\cI})
$ 
and let \(\cL\) be the Lagrangian of \eqref{eq:nlp}.  Since \(\bar t_0\) and \(\bar \t_{\cI}\) enter the lifted problem linearly,
\(\nabla^2_{\w\w}\cL(\bar\w)=\operatorname{diag}(\bm{H}_{\x\x},\bz)\), 
where
\[
        \bm{H}_{\x\x}
        =
        r_{\x}\bm{I}
        +
        \sum_{\ell\in\mathcal A_0(\bar\x)}
        \bar\mu_{0\ell}\nabla^2\phi_{0\ell}(\bar\x)
        +
        \sum_{i\in\cI}
        \sum_{\ell\in\mathcal A_i(\bar\x)}
        \bar\mu_{i\ell}\nabla^2\phi_{i\ell}(\bar\x).
\]
The multipliers are uniformly bounded in the above neighborhood, and the
composite Lagrangian part has weak convexity modulus bounded by
\(L_\rho\). Therefore, by the choice of \(r_{\x}>L_\rho\), we have $ \bm{H}_{\x\x}\succeq (r_{\x}-L_\rho)\bm{I}.$

Let \(\cC(\bar\w,\bm{\bar\mu},\bm{\bar v})\) be the standard critical cone of the lifted NLP at \(\bar\w\). Since every critical direction must be tangent to all active constraints with positive multipliers, we have  \(\cC(\bar\w,\bm{\bar\mu},\bm{\bar v})\subseteq \cC^+(\bar\w,\bm{\bar\mu},\bm{\bar v}), \)  where the critical subspace \(\cC^+(\bar\w,\bm{\bar\mu},\bm{\bar v})\) is defined by
\[ \begin{aligned} \cC^+(\bar\w,\bm{\bar\mu},\bm{\bar v}):= \Bigl\{ \d=(\d_{\x},\d_{t_0},\d_{\t_{\cI}}): \;& \nabla\phi_{0\ell}(\bar\x)^\top\d_{\x}-\d_{t_0}=\bz \quad \forall \ell\in\mathcal A_0(\bar\x) \text{ with }\bar\mu_{0\ell}>0, \\ & \nabla\phi_{i\ell}(\bar\x)^\top\d_{\x}-\d_{t_i}=\bz \quad \forall i\in\cI,\ \ell\in\mathcal A_i(\bar\x) \text{ with }\bar\mu_{i\ell}>0, \\ & \d_{t_i}=\bz \quad \forall i\in\cI \text{ with }\bar v_i>0 \Bigr\}. \end{aligned} \] 
Positive definiteness of the Lagrangian Hessian on the larger subspace
$\cC^+(\bar\w,\bm{\bar\mu},\bm{\bar v})$ can imply the SSOSC required by Robinson's strong
regularity theorem \cite[Theorem 4.1]{Robinson1980}, and is what we now
verify.
For any \(\d\in\cC^+(\bar\w,\bm{\bar\mu},\bm{\bar v})\), since $\sum_{\ell\in\mathcal A_0(\bar\x)}\bar\mu_{0\ell}=1$, we know that $\d_{t_0} = \nabla\phi_{0\hat\ell}(\bar\x)^\top\d_{\x} $ for some $\hat\ell\in\mathcal A_0(\bar\x)$. 
Moreover, since $\bar v_i\ge\gamma/2>0$ for all $i\in\cI$, we have $\d_{t_i}=0$ for all $i\in\cI$. Hence, by local boundedness of \(\nabla\phi_{0\ell}\), there exists \(C_t>0\), independent of \(\z\), such that $\|(\d_{t_0},\d_{\t_{\cI}})\|\le C_t\|\d_{\x}\|$ for any $\d \in \cC^+(\bar\w,\bm{\bar\mu},\bm{\bar v})$. Therefore, for any $\d \in \cC^+(\bar\w,\bm{\bar\mu},\bm{\bar v})$, 
 \[ \langle \d,\nabla^2_{\w\w}\cL(\bar\w)\d\rangle = \d_{\x}^{\top}\bm{H}_{\x\x}\d_{\x} \ge \frac{r_{\x}-L_\rho}{1+C_t^2}\|\d\|^2. \] 
Then, the SSOSC condition  holds at \(\bm{\bar w}\).
Consequently, by 
\cite[Theorem 2.1 \& 4.1]{Robinson1980} (see also
\cite[Theorem 2G.8]{dontchev2009implicit} and
\cite[Section 5.1]{BonnansShapiro2000}), the \(\x\)-projection of the KKT solution map of \eqref{eq:nlp} is locally single-valued and Lipschitz continuous in \(\r\), with a modulus \(c_{\rm eb}>0\)
determined only by \(\tilde\sigma\), \(r_{\x}-L_\rho\), and the local
Lipschitz bounds of \(\nabla\phi_{i\ell}\) (all independent of \(\z\)).
Evaluating at \(\r=G_{\cI}(\x(\y,\z))\), whose unperturbed (\(\r=\bz\))
solution has \(\x\)-component \(\x^\star(\z)\), yields \eqref{eq:eb_x}.
Therefore, armed with \eqref{eq:dual_bound}, we have 
\begin{equation}\label{eq:local-primal-residual-bound-epi}
        \|\x(\y,\z)-\x(\y(\z),\z)\|
        \le
        c_{\rm eb} \operatorname{dist}
        \left(
        \bz,\,
        -\nabla_{\y}d(\y,\z)+\cN_{\R_+^d}(\y)
        \right). 
\end{equation}

Finally, recall that
$     \y_+(\z)
        =
        \operatorname{proj}_{\mathcal Y}
        \bigl(\y+\alpha\nabla_{\y}d(\y,\z)\bigr).
$ 
The projection optimality condition gives
\[
        \alpha^{-1}(\y-\y_+(\z))
        +
        \nabla_{\y}d(\y,\z)
        \in
        \cN_{\R_+^d}(\y_+(\z)), 
\]
which yields 
\[
\begin{aligned}
 \operatorname{dist}
        \left(
        \bz,
        -\nabla_{\y}d(\y_+(\z),\z)+\cN_{\R_+^d}(\y_+(\z))
        \right)
&\le
\left\|
\alpha^{-1}(\y-\y_+(\z))
+
\nabla_{\y}d(\y,\z)
-
\nabla_{\y}d(\y_+(\z),\z)
\right\|
\\
&\le
\left(\alpha^{-1}+L_d\right)
\|\y-\y_+(\z)\|.
\end{aligned}
\]
Since \(\y_+(\z)\in\mathbb B_\epsilon(\y(\z))\), applying
\eqref{eq:local-primal-residual-bound-epi} at 
\(\y_+(\z)\) yields
\[
\begin{aligned}
\|\x(\y_+(\z),\z)-\x(\y(\z),\z)\|
&\le
c_{\rm eb}
\left(\alpha^{-1}+L_d\right)
\|\y_+(\z)-\y\|.
\end{aligned}
\]
We finished the proof. 
\end{proof}

\subsection{Proof of Proposition \ref{prop:suff-decrease}}

Now, we focus on the proof of sufficient decrease property Proposition \ref{prop:suff-decrease}. The proof is split into two
cases. When \(r_{\y}>0\), the global dual error bound directly absorbs
the negative term in Proposition~\ref{prop:decrease}. When \(r_{\y}=0\),
we use a two-stage argument: If the negative term dominates, then the
nonhomogeneous dual error bound first implies that the current iterate
enters the local region where Proposition~\ref{prop:dual_eb_local}
applies (see Lemma \ref{lem:local_entry}); otherwise, the positive descent terms already dominate the
negative term.

\begin{proof}[Proof of Lemma \ref{lem:local_entry}]
Suppose, to the contrary, that no such \(\bar\delta\) exists. Then there
is a sequence of indices \(k_j\) such that
\[
\max\left\{
\|\x^{k_j}-\x^{k_j+1}\|,
\|\y^{k_j}-\y_+^{k_j}(\z^{k_j})\|,
\|\z^{k_j}-\x^{k_j+1}\|
\right\}
\to0,
\]
but
$
\y^{k_j}\notin\mathbb B_{\epsilon}(\y(\z^{k_j}))$ or 
$\y_+^{k_j}(\z^{k_j})
\notin
\mathbb B_{\epsilon}(\y(\z^{k_j})).
$
By compactness of \(\mathcal X\) and \(\mathcal Y\), after passing to a
subsequence if necessary, we may assume that there exist
$
\bar\x\in\mathcal X$, $\bar\y\in\mathcal Y$
such that
$
\x^{k_j+1}\to\bar\x$,
$\z^{k_j}\to\bar\x$,
$\y^{k_j}\to\bar\y$,
and $
\y_+^{k_j}(\z^{k_j})\to\bar\y .
$ 

By the definition of the projected dual step,
$
\y_+^{k_j}(\z^{k_j})
=
\proj_{\mathcal Y}
(
\y^{k_j}
+
\alpha\nabla_{\y}d(\y^{k_j},\z^{k_j})
)$.
Passing to the limit and using the continuity of the projection and of
\(\nabla_{\y}d\), we obtain
$
\bar\y
=
\proj_{\mathcal Y}
(
\bar\y+\alpha\nabla_{\y}d(\bar\y,\bar\x)
)$.
Equivalently,
$
\bz\in
-\nabla_{\y}d(\bar\y,\bar\x)
+
\mathcal{N}_{\mathcal Y}(\bar\y)$.
Since \(d(\cdot,\bar\x)\) is concave on \(\mathcal Y\), this first-order
condition is sufficient for global optimality. Hence
$
\bar\y\in\mathcal Y(\bar\x)$.
Moreover, because
$
\|\x^{k_j+1}-\z^{k_j}\|\to0
$
and
$
\|\x^{k_j}-\x^{k_j+1}\|\to0$,
the primal error bound in Lemma~\ref{prop:lip} gives
$
\|\x^{k_j+1}-\x(\y^{k_j},\z^{k_j})\|\to0$, which implies
$\bar\x=\x(\bar\y,\bar\x)$.
Together with \(\bar\y\in\mathcal Y(\bar\x)\), this means that
\((\bar\x,\bar\y)\) is a KKT pair by Lemma~\ref{lem:S-eq-KKT}.

Therefore, for all sufficiently large \(j\), the pair
\((\y^{k_j},\y_+^{k_j}(\z^{k_j}))\) must lie in the local neighborhood
where Proposition~\ref{prop:dual_eb_local} applies; that is,
$
\y^{k_j},\ \y_+^{k_j}(\z^{k_j})
\in
\mathbb B_{\epsilon}(\y(\z^{k_j})).
$ 
This contradicts the construction. Hence such a \(\bar\delta>0\)
exists. 
\end{proof}

\begin{proof}[Proof of Proposition \ref{prop:suff-decrease}]
We first consider case \textnormal{(i)}. By
Proposition~\ref{prop:decrease} and \ref{prop:dual_eb_KL}, for any \(k\ge0\),
\begin{align*}
\Phi^k-\Phi^{k+1}
\ge\ &
\frac{7}{16\lambda}\|\x^{k}-\x^{k+1}\|^{2}
+
\left(
\frac{1}{8\alpha}
-
28r_{\x}\beta\omega_1^2
\right)
\|\y^{k}-\y_+^{k}(\z^k)\|^2 
+\frac{4r_{\x}}{7\beta}
\|\z^{k}-\z^{k+1}\|^{2}.
\end{align*}
The condition
$
\beta
\le
\frac{1}{448r_{\x}\omega_1^2\alpha}$
implies
$
\frac{1}{8\alpha}
-
28r_{\x}\beta\omega_1^2
\ge
\frac{1}{16\alpha}$.
Hence, \eqref{eq:suff-decrease} holds.

We next consider case \textnormal{(ii)}. We split the proof into two
cases.
First, suppose that
\begin{align}\label{eq:case1-domination}
&\frac{1}{2}
\max\left\{
\frac{7}{16\lambda}\|\x^k-\x^{k+1}\|^2,\,
\frac{1}{8\alpha}\|\y^k-\y_+^k(\z^k)\|^2,\,
\frac{4r_{\x}}{7\beta}\|\z^k-\z^{k+1}\|^2
\right\}
\notag\\
&<
28r_{\x}\beta
\|\x(\y(\z^k),\z^k)-\x(\y_+^k(\z^k),\z^k)\|^2 .
\end{align}
Then 
using Lemma~\ref{lemma-dual-bd}, we get
\begin{align*}
\|\y^k-\y_+^k(\z^k)\|^2
&\le
448r_{\x}\alpha\beta
\|\x(\y(\z^k),\z^k)-\x(\y_+^k(\z^k),\z^k)\|^2\le
448r_{\x}\alpha\beta\kappa
\|\y^k-\y_+^k(\z^k)\|,
\end{align*}
which implies
$
\|\y^k-\y_+^k(\z^k)\|
\le
\varrho_1\beta$.
Moreover, since
$
\|\x^{k+1}-\z^k\|
=
\frac{1}{\beta}\|\z^{k+1}-\z^k\|$,
we have
\begin{align*}
\|\x^{k+1}-\z^k\|^2
&\le
98
\|\x(\y(\z^k),\z^k)-\x(\y_+^k(\z^k),\z^k)\|^2\le
98\kappa
\|\y^k-\y_+^k(\z^k)\|\le
\varrho_2^2\beta.
\end{align*}
Similarly,
\begin{align*}
\|\x^k-\x^{k+1}\|^2
&\le
128r_{\x}\lambda\beta
\|\x(\y(\z^k),\z^k)-\x(\y_+^k(\z^k),\z^k)\|^2\le
128r_{\x}\lambda\kappa\beta
\|\y^k-\y_+^k(\z^k)\|\le
\varrho_3^2\beta^2.
\end{align*}
By the choice of \(\beta\), the above three estimates imply
\[
\max\left\{
\|\x^k-\x^{k+1}\|,
\|\y^k-\y_+^k(\z^k)\|,
\|\x^{k+1}-\z^k\|
\right\}
\le
\bar\delta .
\]
By Lemma~\ref{lem:local_entry}, local dual error bound in
Proposition~\ref{prop:dual_eb_local} applies, and we obtain
\[
\|\x(\y(\z^k),\z^k)-\x(\y_+^k(\z^k),\z^k)\|
\le
\omega_2
\|\y^k-\y_+^k(\z^k)\|.
\]
Consequently,
\begin{align*}
&28r_{\x}\beta
\|\x(\y(\z^k),\z^k)-\x(\y_+^k(\z^k),\z^k)\|^2\le
28r_{\x}\beta\omega_2^2
\|\y^k-\y_+^k(\z^k)\|^2\le
\frac{1}{16\alpha}
\|\y^k-\y_+^k(\z^k)\|^2,
\end{align*}
where the last inequality follows from
$
\beta
\le
\frac{1}{448r_{\x}\omega_2^2\alpha}$.
Substituting this estimate into Proposition~\ref{prop:decrease}, we
obtain \eqref{eq:suff-decrease}.
On the other hand, if the reverse inequality in \eqref{eq:case1-domination} holds, then \eqref{eq:suff-decrease} follows immediately.
Combining the case (i) and (ii) completes the proof.
\end{proof}

\subsection{Proof of Propositions \ref{prop:primalbd} and \ref{prop:dual-bd-ybd-linearized}}

The purpose of this subsection is to prove a sufficiently large set of
iterations at which the primal point is nearly feasible, see Proposition \ref{prop:primalbd},  and the artificial
truncation in the dual set \(\mathcal Y\) is inactive, see Proposition \ref{prop:dual-bd-ybd-linearized}. These estimates
will be used later to convert the algorithmic residuals into the KKT
residuals of problem~\eqref{eq:problem}.

Throughout this subsection, let
$
f_{\min}:=\min_{\x\in\mathcal X} f(\x)
$.
Since \(\bz\in\mathcal Y\), we have
\[
p(\z)
=
\max_{\y\in\mathcal Y}d(\y,\z)
\ge
d(\bz,\z)
=
\min_{\x\in\mathcal X}
\left\{
\mathcal L_{\rho}(\x,\bz)
+
\frac{r_{\x}}{2}\|\x-\z\|^2
\right\}
\ge f_{\min}.
\]
Consequently,
\[
\Phi(\x,\y,\z)
=
F(\x,\y,\z)-d(\y,\z)+p(\z)-d(\y,\z)+p(\z)
\ge
p(\z)
\ge
f_{\min}.
\]

\begin{proof}[Proof of Proposition \ref{prop:primalbd}]
For \(k\ge0\), define
$
\q^{k+1}
:=
\max\{
G(\x^{k+1}),-\frac{\y^k}{\rho}\}.
$
Then the dual update can be
written as
$
\y^{k+1}
=
\proj_{\mathcal Y}
(
\y^k+\alpha(\q^{k+1}-r_{\y}\y^k))$.
We first relate the smoothed augmented Lagrangian term to the projected
dual step. We claim that, for each
\(i\in[d]\),
\begin{equation}\label{eq:scalar-ineq-feasibility}
\frac{\rho}{2}
\left[
\left(G_i(\x^{k+1})+\frac{y_i^{k+1}}{\rho}\right)_+^2
-
\frac{(y_i^{k+1})^2}{\rho^2}
\right]
\ge
y_i^{k+1}q_i^{k+1}
+
\frac{\rho}{2}G_i(\x^{k+1})_+^2 .
\end{equation}
We consider the following three cases.

\textbf{Case \textrm{(i)}.} Suppose that \(G_i(\x^{k+1})\ge0\). Then, we have 
$
q_i^{k+1}=G_i(\x^{k+1})$,
$G_i(\x^{k+1})_+=G_i(\x^{k+1})$,
and equality holds in \eqref{eq:scalar-ineq-feasibility}.

\textbf{Case \textrm{(ii)}.} Suppose that
$
-\frac{y_i^k}{\rho}
\le
G_i(\x^{k+1})<0$.
It implies that 
$
q_i^{k+1}=G_i(\x^{k+1})$,
$G_i(\x^{k+1})_+=0$.
If
$
G_i(\x^{k+1})+\frac{y_i^{k+1}}{\rho}\ge0$,
then the left-hand side of \eqref{eq:scalar-ineq-feasibility} equals
\[
y_i^{k+1}G_i(\x^{k+1})
+
\frac{\rho}{2}G_i(\x^{k+1})^2
\ge
y_i^{k+1}G_i(\x^{k+1})
=
y_i^{k+1}q_i^{k+1}.
\]
If
$
G_i(\x^{k+1})+\frac{y_i^{k+1}}{\rho}<0$,
then the left-hand side of \eqref{eq:scalar-ineq-feasibility} equals
$
-\frac{(y_i^{k+1})^2}{2\rho}$.
Moreover,
$
G_i(\x^{k+1})<-\frac{y_i^{k+1}}{\rho}$,
and hence
$
-\frac{(y_i^{k+1})^2}{2\rho}
\ge
y_i^{k+1}G_i(\x^{k+1})
=
y_i^{k+1}q_i^{k+1}$.
Thus \eqref{eq:scalar-ineq-feasibility} holds in this case.

\textbf{Case \textrm{(iii)}.} Suppose that
$
G_i(\x^{k+1})<-\frac{y_i^k}{\rho}$.
Then
$
q_i^{k+1}=-\frac{y_i^k}{\rho}$,
$G_i(\x^{k+1})_+=0$.
Since
$
\mathcal Y=\{\y\in\mathbb R_+^d:\|\y\|_1\le R_{\y}\}$,
the projection onto \(\mathcal Y\) satisfies
$
0\le y_i^{k+1}
\le
(
y_i^k+\alpha(q_i^{k+1}-r_{\y}y_i^k))_+$.
This together with
$
q_i^{k+1}=-\frac{y_i^k}{\rho}$ implies that
$
0\le y_i^{k+1}\le y_i^k$.
It follows that
$
G_i(\x^{k+1})
<
-\frac{y_i^k}{\rho}
\le
-\frac{y_i^{k+1}}{\rho}.
$
Hence
the left-hand side of \eqref{eq:scalar-ineq-feasibility} equals
$
-\frac{(y_i^{k+1})^2}{2\rho}$.
Using again \(0\le y_i^{k+1}\le y_i^k\), we obtain
$-\frac{(y_i^{k+1})^2}{2\rho}
\ge
-\frac{y_i^k y_i^{k+1}}{\rho}
=
y_i^{k+1}q_i^{k+1}$.
Thus \eqref{eq:scalar-ineq-feasibility} also holds in this case.

Combining the three cases proves \eqref{eq:scalar-ineq-feasibility}.
Summing \eqref{eq:scalar-ineq-feasibility} over \(i=1,\ldots,d\), we get
\begin{equation*}
\mathcal L_{\rho}(\x^{k+1},\y^{k+1})
\ge
f(\x^{k+1})
+
\langle \y^{k+1},\q^{k+1}\rangle
+
\frac{\rho}{2}\|G(\x^{k+1})_+\|^2 .
\end{equation*}
Therefore, using \(f(\x^{k+1})\ge f_{\min}\) and dropping the
nonnegative term
\(\frac{r_{\x}}{2}\|\x^{k+1}-\z^{k+1}\|^2\), we obtain
\begin{align}
F(\x^{k+1},\y^{k+1},\z^{k+1})
&\ge
f_{\min}
+
\frac{\rho}{2}\|G(\x^{k+1})_+\|^2
+
\langle \y^{k+1},\q^{k+1}\rangle
-
\frac{r_{\y}}{2}\|\y^{k+1}\|^2 . \label{eq:F-lower-feasibility}
\end{align}

Since \(\bz\in\mathcal Y\), the convex projection theorem gives
$
\langle
\y^{k+1},
\y^{k+1}-\y^k-\alpha(\q^{k+1}-r_{\y}\y^k)\rangle
\le 0$.
Then
$\langle \y^{k+1},\q^{k+1}\rangle
\ge
\alpha^{-1}
\langle \y^{k+1},\y^{k+1}-\y^k\rangle
+
r_{\y}\langle \y^{k+1},\y^k\rangle$,
and consequently we have
\begin{align}
&\langle \y^{k+1},\q^{k+1}\rangle
-
\frac{r_{\y}}{2}\|\y^{k+1}\|^2 \notag\\
\ge\;&
\alpha^{-1}\langle \y^{k+1},\y^{k+1}-\y^k\rangle
+
r_{\y}
\langle \y^{k+1},\y^k\rangle
-
\frac{r_{\y}}{2}\|\y^{k+1}\|^2 \notag\\
=\;&
\frac{1}{2\alpha}
\left(
\|\y^{k+1}\|^2-\|\y^k\|^2
\right)
+
\frac{1}{2}(\alpha^{-1}-r_{\y})
\|\y^{k+1}-\y^k\|^2
+
\frac{r_{\y}}{2}\|\y^k\|^2, \label{eq:dual-telescope-feasibility}
\end{align}
where last equality follows from the identities
$
\langle a,a-b\rangle
=
\frac12(
\|a\|^2-\|b\|^2+\|a-b\|^2)$
and
$
\langle a,b\rangle
=
\frac12(
\|a\|^2+\|b\|^2-\|a-b\|^2)$,
with \(a=\y^{k+1}\) and \(b=\y^k\).
Since \(\alpha^{-1}\ge r_{\y}\), the last two terms in
\eqref{eq:dual-telescope-feasibility} are nonnegative except for the
telescoping difference.

Moreover, by Proposition~\ref{prop:suff-decrease}, the sequence
\(\{\Phi^k\}\) is nonincreasing. Since
$
\Phi(\x,\y,\z)-F(\x,\y,\z)
=
2(p(\z)-d(\y,\z))
\ge0$,
we have for every \(k=0,\ldots,K-1\) that
$
F(\x^{k+1},\y^{k+1},\z^{k+1})
\le
\Phi^{k+1}
\le
\Phi^0$.
Combining this with \eqref{eq:F-lower-feasibility} and
\eqref{eq:dual-telescope-feasibility}, and then summing over
\(k=0,\ldots,K-1\), yields
\begin{align*}
&
\frac{\rho}{2}
\sum_{k=0}^{K-1}
\|G(\x^{k+1})_+\|^2
+
\sum_{k=0}^{K-1}
\left[
\frac{1}{2\alpha}
\left(
\|\y^{k+1}\|^2-\|\y^k\|^2
\right)
+
\frac{1}{2}(\alpha^{-1}-r_{\y})
\|\y^{k+1}-\y^k\|^2
+
\frac{r_{\y}}{2}\|\y^k\|^2
\right]\\
&\le
K(\Phi^0-f_{\min}).
\end{align*}
Using \(\y^0=\bz\) and \(\alpha^{-1}\ge r_{\y}\), the summation term is
nonnegative. Hence
$
\sum_{k=0}^{K-1}
\|G(\x^{k+1})_+\|^2
\le
\frac{2K(\Phi^0-f_{\min})}{\rho}$,
and the number of indices satisfying
\(\|G(\x^{k+1})_+\|>\delta\) is at most
$
\frac{2K(\Phi^0-f_{\min})}{\rho\delta^2}$.
The proof is complete.
\end{proof}

\begin{proof}[Proof of Proposition \ref{prop:dual-bd-ybd-linearized}]
By Definition~\ref{ass:compact}, whenever
\(\|\y\|_1<R_{\y}\), the artificial boundary of \(\mathcal Y\) is inactive
and
$
\mathcal{N}_{\mathcal Y}(\y)=\mathcal{N}_{\mathbb R^d_+}(\y)$.
For \(\bar\x\in\mathcal X\), define the prox-linear constraint model $G_{\bar\x,i}(\x) := h_i(c_i(\bar\x)+\nabla c_i(\bar\x)^\top(\x-\bar\x))$, $\forall i\in[d].$ 
By Assumption \ref{ass:basic}, for all prox-linear constraint models
$
G_{\bar\x}(\x):=
(G_{\bar\x,1}(\x),\ldots,G_{\bar\x,d}(\x)),
$
we have
$
\|G_{\bar\x}(\x)\|_1\le R_{\x}$, for all 
$\bar\x,\x\in\mathcal X$.
Similarly, since finite convex functions are locally Lipschitz and have
bounded subgradients on compact subsets of the interior of their domains,
after increasing \(L\), if necessary, we assume that, for all
\(i=0,1,\ldots,d\),
\begin{equation}\label{eq:gd_upper}
\sup_{\x,\bar\x\in\mathcal X}
\sup_{\s_i\in\partial h_i(c_i(\bar\x)+\nabla c_i(\bar\x)^\top(\x-\bar\x))}
\|\nabla c_i(\bar\x)\s_i\|
\le L .
\end{equation}

By Proposition~\ref{prop:suff-decrease}, we have
\[
\Phi^k-\Phi^{k+1}
\ge
\frac{3}{16\lambda}\|\x^{k}-\x^{k+1}\|^{2}
+
\frac{1}{16\alpha}\|\y^k-\y_+^k(\z^k)\|^2
+
\frac{2r_{\x}}{7\beta}\|\z^k-\z^{k+1}\|^2 .
\]
Since
$
\z^{k+1}-\z^k=\beta(\x^{k+1}-\z^k)$,
and \(\beta\le \tfrac{1}{28}\), we obtain
\[
\Phi^k-\Phi^{k+1}
\ge
\frac{2\beta}{7}
\left[
\lambda^{-1}\|\x^k-\x^{k+1}\|^2
+
\alpha^{-1}\|\y^k-\y_+^k(\z^k)\|^2
+
r_{\x}\|\z^k-\x^{k+1}\|^2
\right].
\]
Summing over \(k=0,\ldots,K-1\), and using \(\Phi^K\ge f_{\min}\), gives
\[
\sum_{k=0}^{K-1}
\left[
\lambda^{-1}\|\x^{k+1}-\x^k\|^2
+
\alpha^{-1}\|\y^k-\y_+^k(\z^k)\|^2
+
r_{\x}\|\x^{k+1}-\z^k\|^2
\right]
\le
\frac{7(\Phi^0-f_{\min})}{2\beta}.
\]
Therefore, the number of indices satisfying
\[
\max\left\{
\lambda^{-1}\|\x^{k+1}-\x^k\|^2,\,
\alpha^{-1}\|\y^k-\y_+^k(\z^k)\|^2,\,
r_{\x}\|\x^{k+1}-\z^k\|^2
\right\}
>
\xi
\]
is at most
$
\frac{7(\Phi^0-f_{\min})}{2\beta\xi}$.
Thus, for at least
$
K-\frac{7(\Phi^0-f_{\min})}{2\beta\xi}
$
indices, we have
\begin{equation}\label{eq:small-move-good}
\lambda^{-1}\|\x^{k+1}-\x^k\|^2\le\xi,
\quad
\alpha^{-1}\|\y^k-\y_+^k(\z^k)\|^2\le\xi,
\quad \text{and} \quad 
r_{\x}\|\x^{k+1}-\z^k\|^2\le\xi.
\end{equation}

By Proposition~\ref{prop:primalbd}, the condition
$
\|G(\x^{k+1})_+\|\le \delta
$
holds for at least
$
K-
\frac{2K(\Phi^0-f_{\min})}{\rho\delta^2}
$
indices. Combining the two counting estimates, there are at least
$
K
-
\frac{7(\Phi^0-f_{\min})}{2\beta\xi}
-
\frac{2K(\Phi^0-f_{\min})}{\rho\delta^2}
$
indices satisfying both \eqref{eq:small-move-good} and
$
\x^{k+1}\in
\mathcal R_{\delta}
\subseteq
\mathcal R_{\delta_{\rm cq}}$.
The condition \eqref{eq:rho-K-good} guarantees that the above number is
positive.

Fix such a good index \(k\), we define
$
\gamma_i^k
:=
(y_i^k+\rho G_{\x^k,i}(\x^{k+1}))_+
$ for $i \in [d]$. 
Since \(0<\delta\le\delta_{\rm cq}\), we have 
$
\|\x^{k+1}-\x^k\|\le \sqrt{\lambda\xi}\le\delta_{\rm cq}$,
Thus, Assumption~\ref{ass:ucq} applies with
$
\bar\x=\x^k$,
$\x=\x^{k+1}$, $\bm\mu=\bm{\gamma}^k$.
Using $\g_i^k\in\partial G_{\x^k,i}(\x^{k+1})$ for $i\in [d]$, we obtain 
\[
\bar\sigma\|\bm{\gamma}^k\|
\le
\dist\left(
\bz,
\sum_{i=1}^d\gamma_i^k\g_i^k
+
\mathcal N_{\mathcal X}(\x^{k+1})
\right).
\]
Moreover, the  optimality condition of the primal update \eqref{eq:prox_linear_model} gives
\[
\bz \in
\v_0^k
+
\sum_{i=1}^d\gamma_i^k\g_i^k
+
\lambda^{-1}(\x^{k+1}-\x^k)
+
r_{\x}(\x^{k+1}-\z^k)
+
\mathcal N_{\mathcal X}(\x^{k+1}),
\]
where 
$
f_0^k(\x)
:=
h_0(c_0(\x^k)+\nabla c_0(\x^k)^\top(\x-\x^k))$ and
$
\v_0^k\in\partial f_0^k(\x^{k+1})$. 
Hence
\[
\dist\left(
\bz,
\sum_{i=1}^d\gamma_i^k\g_i^k
+
\mathcal N_{\mathcal X}(\x^{k+1})
\right)
\le
\|\v_0^k\|
+
\lambda^{-1}\|\x^{k+1}-\x^k\|
+
r_{\x}\|\x^{k+1}-\z^k\|.
\]
By the boundedness of the subgradients of the linearized composite
objective from \eqref{eq:gd_upper}, \(\|\v_0^k\|\le L\). Hence, using \eqref{eq:small-move-good},
 we obtain
\[
\|\bm{\gamma}^k\|
\le
\frac{
L+(\sqrt{\lambda^{-1}}+\sqrt{r_{\x}})\sqrt{\xi}
}{
\bar\sigma
}.
\]
For all \(i\in[d]\), we have
$
0\le y_i^k
\le
(y_i^k+\rho G_{\x^k,i}(\x^{k+1}))_+
+
\rho |G_{\x^k,i}(\x^{k+1})|$.
Therefore,
$
\|\y^k\|_1
\le
\sqrt{d}\|\bm{\gamma}^k\|
+
\rho\|G_{\x^k}(\x^{k+1})\|_1$.
By the standing choice of \(R_{\x}\),
$
\|G_{\x^k}(\x^{k+1})\|_1\le R_{\x}$.
Hence,
\[
\|\y^k\|_1
\le
\rho R_{\x}
+
\frac{
\sqrt{d}(L+(\sqrt{\lambda^{-1}}+\sqrt{r_{\x}})\sqrt{\xi}
)}{
\bar\sigma
}.
\]

It remains to verify that the projection in the dual update does not
hit the artificial boundary of \(\mathcal Y\). Define
$
\u^k
:=
\y^k+\alpha
(
\max\{G(\x^{k+1}),-\frac{\y^k}{\rho}\}
-r_{\y}\y^k)$ and $\y^{k+1}=\proj_{\mathcal Y}(\u^k)$.
For all \(i\in[d]\), since \(y_i^k\ge0\) and \(r_{\y}\ge0\), we claim that 
$
(u_i^k)_+
\le
y_i^k+\alpha G_i(\x^{k+1})_+ .
$
If
$
G_i(\x^{k+1})\ge -\frac{y_i^k}{\rho}$,
then
\[
u_i^k
=
y_i^k+\alpha\left(G_i(\x^{k+1})-r_{\y}y_i^k\right)
\le
y_i^k+\alpha G_i(\x^{k+1})
\le
y_i^k+\alpha G_i(\x^{k+1})_+ .
\]
If
$
G_i(\x^{k+1})<-\frac{y_i^k}{\rho}$,
then
$
u_i^k
=
y_i^k+\alpha\left(-\frac{y_i^k}{\rho}-r_{\y}y_i^k\right)
\le
y_i^k
\le
y_i^k+\alpha G_i(\x^{k+1})_+$.
For the counted indices, \(\|G(\x^{k+1})_+\|\le\delta\). Therefore,
\[
\begin{aligned}
\|(\u^k)_+\|_1
\le
\|\y^k\|_1+\alpha\|G(\x^{k+1})_+\|_1 \le
\rho R_{\x}
+
\frac{
\sqrt{d}(L+(\sqrt{\lambda^{-1}}+\sqrt{r_{\x}})\sqrt{\xi})
}{
\bar\sigma
}
+
\alpha\sqrt d\delta
<
R_{\y},
\end{aligned}
\]
where we used \(\|G(\x^{k+1})_+\|_1\le\sqrt d\delta\) and
\eqref{eq:ryrequire}.
Together with the preceding estimate for \(\|\y^k\|_1\), we obtain
$
\|\y^k\|_1<R_{\y}$ and $
\|\y^{k+1}\|_1<R_{\y}
$
for all counted indices. The proof is complete.
\end{proof}

\subsection{Proof of Theorem \ref{thm:general}}

The following lemma converts the algorithmic residuals selected from
the sufficient decrease estimate into the KKT residual of the original
problem.

\begin{lemma}
\label{lemma-episolcol}
Let
\(\epsilon\ge0\). Suppose that
$
\|\y^k\|_1<R_{\y}$, $ 
\|\y^{k+1}\|_1<R_{\y},
$ 
and
\[
\max\left\{
\|\x^{k+1}-\x^k\|,
\|\y_+^k(\z^k)-\y^k\|,
\|\x^{k+1}-\z^k\|,
r_{\y}
\right\}
\le
\epsilon .
\]
Then, there exists a constant \(\tau>0\) such that
$
(\x^{k+1},\y^{k+1})$
is a \(\tau\epsilon\)-KKT point of problem~\eqref{eq:problem} in the
sense of Definition~\ref{defi:primal-dual}.
\end{lemma}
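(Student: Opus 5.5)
We must bound the two quantities in Definition~\ref{defi:primal-dual} at $(\x^{k+1},\y^{k+1})$ linearly in $\epsilon$: the primal residual $\|\nabla_{\z}d(\y^{k+1},\x^{k+1})\|$ and the dual residual $\dist(\bz,-\nabla_{\y}\mathcal L_\rho(\x^{k+1},\y^{k+1})+\mathcal N_{\R^d_+}(\y^{k+1}))$.

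\textbf{Primal residual.} By Danskin's theorem, $\nabla_{\z}d(\y,\z)=r_{\x}(\z-\x(\y,\z))$. Hence
\[
\|\nabla_{\z}d(\y^{k+1},\x^{k+1})\|=r_{\x}\|\x^{k+1}-\x(\y^{k+1},\x^{k+1})\|.
\]
I would split this with the triangle inequality as
\[
\|\x^{k+1}-\x(\y^k,\z^k)\|+\|\x(\y^k,\z^k)-\x(\y^{k+1},\z^k)\|+\|\x(\y^{k+1},\z^k)-\x(\y^{k+1},\x^{k+1})\|.
\]
The three terms are handled as follows.
\begin{itemize}
\item The first term is the standard prox-linear primal error bound (Lemma~\ref{prop:lip} in the appendix, already used in Lemma~\ref{lem:local_entry}). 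It is bounded by a constant times $\|\x^{k+1}-\x^k\|+\|\x^{k+1}-\z^k\|$, since $\x^{k+1}$ minimizes a model that differs from $F(\cdot,\y^k,\z^k)$ by $\mathcal O(\|\x-\x^k\|^2)$ plus the $\lambda^{-1}$ proximal term.
\item The second term uses Lipschitz continuity of $\x(\cdot,\z)$ in $\y$, with modulus $\sigma_2$. This gives $\sigma_2\|\y^k-\y^{k+1}\|$.
\item The third term uses Lipschitz continuity in $\z$, with modulus $\sigma_1$. This gives $\sigma_1\|\z^k-\x^{k+1}\|\le\sigma_1\epsilon$.
\end{itemize}
It remains to bound $\|\y^{k+1}-\y^k\|$. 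Note that $\y^{k+1}$ uses $G(\x^{k+1})$ while $\y_+^k(\z^k)$ uses $\nabla_{\y}d(\y^k,\z^k)=\max\{G(\x(\y^k,\z^k)),-\y^k/\rho\}-r_{\y}\y^k$. By nonexpansiveness of the projection,
\[
\|\y^{k+1}-\y_+^k(\z^k)\|\le\alpha L_G\|\x^{k+1}-\x(\y^k,\z^k)\|=\mathcal O(\epsilon).
\]
Together with $\|\y_+^k(\z^k)-\y^k\|\le\epsilon$, this gives $\|\y^{k+1}-\y^k\|=\mathcal O(\epsilon)$.

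\textbf{Dual residual.} Since $\|\y^{k+1}\|_1<R_{\y}$, we have $\mathcal N_{\mathcal Y}(\y^{k+1})=\mathcal N_{\R^d_+}(\y^{k+1})$. This is the one place the inactivity hypothesis enters, and it is essential. The projection optimality condition for $\y^{k+1}=\proj_{\mathcal Y}(\u^k)$ gives
\[
\alpha^{-1}(\y^k-\y^{k+1})+\max\{G(\x^{k+1}),-\y^k/\rho\}-r_{\y}\y^k\in\mathcal N_{\R^d_+}(\y^{k+1}).
\]
Therefore the dual residual is at most
\[
\alpha^{-1}\|\y^k-\y^{k+1}\|+\bigl\|\max\{G(\x^{k+1}),-\y^k/\rho\}-\max\{G(\x^{k+1}),-\y^{k+1}/\rho\}\bigr\|+r_{\y}\|\y^k\|.
\]
The middle term is at most $\rho^{-1}\|\y^k-\y^{k+1}\|$, since $\max$ is $1$-Lipschitz. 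The last term is at most $r_{\y}R_{\y}\le R_{\y}\epsilon$. Here $\nabla_{\y}\mathcal L_\rho(\x,\y)=\max\{G(\x),-\y/\rho\}$ (without the $r_{\y}$ term), which is why the bias $r_{\y}\y^k$ appears as a separate term. All three pieces are therefore $\mathcal O(\epsilon)$.

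\textbf{Conclusion.} Collecting constants ($r_{\x},\sigma_1,\sigma_2,\alpha,\rho,L_G,R_{\y},\lambda$ and the primal error bound constant) yields $\tau$.

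The main obstacle is the first primal term. It requires the primal error bound comparing the prox-linear update with the exact minimizer $\x(\y^k,\z^k)$, which is presumably the appendix lemma. If that lemma is not available in the needed form, I would derive it directly. By $(r_{\x}-L_\rho)$-strong convexity of $F(\cdot,\y^k,\z^k)$ and the optimality condition for $\x^{k+1}$ in the model, the two subdifferentials differ by $\lambda^{-1}(\x^{k+1}-\x^k)$ plus a model-gap term. That gap is $\mathcal O(L_c\|\x^{k+1}-\x^k\|)$ times bounded multipliers, since $\|\y^k\|_1\le R_{\y}$.
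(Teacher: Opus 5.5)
Your proposal is correct and follows the paper's proof essentially step for step. It uses the same three-term triangle split of $r_{\x}\|\x^{k+1}-\x(\y^{k+1},\x^{k+1})\|$ via Lemma~\ref{prop:lip} and Lemma~\ref{lemma-sollip}, the same projection-nonexpansiveness bound $\|\y^{k+1}-\y_+^k(\z^k)\|\le\alpha L_G\zeta\|\x^{k+1}-\x^k\|$, and the same dual-residual estimate $(\alpha^{-1}+\rho^{-1})\|\y^{k+1}-\y^k\|+R_{\y}r_{\y}$ based on $\mathcal N_{\mathcal Y}(\y^{k+1})=\mathcal N_{\R^d_+}(\y^{k+1})$.

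One caveat concerns only your fallback derivation. A subdifferential comparison would not work cleanly, because $\partial h_i$ at $c_i^k(\x^{k+1})$ and at $c_i(\x^{k+1})$ need not be close when $h_i$ is nonsmooth. Lemma~\ref{prop:lip}, which gives $\zeta\|\x^{k+1}-\x^k\|$ with no $\|\x^{k+1}-\z^k\|$ term, comes instead from the function-value sandwich of Fact~\ref{fact-2} combined with strong convexity of both the model and $F(\cdot,\y^k,\z^k)$.
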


\begin{proof}
We first relate the actual dual iterate \(\y^{k+1}\) to the ideal dual
step \(\y_+^k(\z^k)\). Due to the definitions of \(\y^{k+1}\) and
\(\y_+^k(\z^k)\), and  the nonexpansiveness of the projection,
\begin{align*}
\|\y^{k+1}-\y_+^k(\z^k)\|
&\le
\alpha
\left\|
\nabla_{\y}F(\x^{k+1},\y^k,\z^k)
-
\nabla_{\y}d(\y^k,\z^k)
\right\| \\
&=
\alpha
\left\|
\nabla_{\y}\mathcal L_\rho(\x^{k+1},\y^k)
-
\nabla_{\y}\mathcal L_\rho(\x(\y^k,\z^k),\y^k)
\right\| \\
&\le
\alpha L_G
\|\x^{k+1}-\x(\y^k,\z^k)\| \\
&\le
\alpha L_G\zeta
\|\x^{k+1}-\x^k\|\le
\alpha L_G\zeta\,\epsilon,
\end{align*}
where the second inequality uses the Lipschitz continuity, and
the third inequality follows from Lemma~\ref{prop:lip}. Hence
\begin{equation}\label{eq:yk1-yk-bound}
\|\y^{k+1}-\y^k\|
\le
\|\y^{k+1}-\y_+^k(\z^k)\|
+
\|\y_+^k(\z^k)-\y^k\|
\le
(1+\alpha L_G\zeta)\epsilon.
\end{equation}

Now, we bound the primal stationary residual. By
Lemma~\ref{lemma-dualdiff}, we have
$
\nabla_{\z}d(\y^{k+1},\x^{k+1})
=
r_{\x}
(
\x^{k+1}
-
\x(\y^{k+1},\x^{k+1})).
$ 
Hence,
\begin{align*}
&\|\nabla_{\z}d(\y^{k+1},\x^{k+1})\| 
=
r_{\x}
\|\x^{k+1}-\x(\y^{k+1},\x^{k+1})\|\\
\le\;&
r_{\x}
\Bigl(
\|\x^{k+1}-\x(\y^k,\z^k)\|
+
\|\x(\y^k,\z^k)-\x(\y^{k+1},\z^k)\|
+
\|\x(\y^{k+1},\z^k)
-
\x(\y^{k+1},\x^{k+1})\|
\Bigr).
\end{align*}
Therefore, using Lemma~\ref{prop:lip}, Lemma~\ref{lemma-sollip},
and \eqref{eq:yk1-yk-bound}, we get
\[
\|\nabla_{\z}d(\y^{k+1},\x^{k+1})\|
\le
r_{\x}
\left[
\zeta
+
\sigma_2(1+\alpha L_G\zeta)
+
\sigma_1
\right]\epsilon .
\]

Next, we bound the dual residual. By the update rule 
$
\y^{k+1}
=
\proj_{\mathcal Y}
(
\y^k+\alpha\nabla_{\y}F(\x^{k+1},\y^k,\z^k))$, the projection optimality condition gives
$
\frac{1}{\alpha}
(
\y^k-\y^{k+1}
)
+
\nabla_{\y}F(\x^{k+1},\y^k,\z^k)
\in
\mathcal{N}_{\mathcal Y}(\y^{k+1})$.
Due to 
$
\mathcal{N}_{\mathcal Y}(\y^{k+1})
=
\mathcal{N}_{\mathbb R^d_+}(\y^{k+1})$,
we have
\begin{align*}
&\dist\left(
\bz,
-\nabla_{\y}\mathcal L_\rho(\x^{k+1},\y^{k+1})
+
\mathcal{N}_{\mathbb R^d_+}(\y^{k+1})
\right)\\
\le\;&
\alpha^{-1}\|\y^{k+1}-\y^k\|
+
\left\|
\nabla_{\y}F(\x^{k+1},\y^k,\z^k)
-
\nabla_{\y}\mathcal L_\rho(\x^{k+1},\y^{k+1})
\right\|.
\end{align*}
Recall that
$
\nabla_{\y}F(\x^{k+1},\y^k,\z^k)
=
\nabla_{\y}\mathcal L_\rho(\x^{k+1},\y^k)
-r_{\y}\y^k
$
and
$
\nabla_{\y}\mathcal L_\rho(\x,\y)
=
\max\{
G(\x),-\frac{\y}{\rho}
\}$.
Hence,
\[
\left\|
\nabla_{\y}\mathcal L_\rho(\x^{k+1},\y^k)
-
\nabla_{\y}\mathcal L_\rho(\x^{k+1},\y^{k+1})
\right\|
\le
\rho^{-1}\|\y^{k+1}-\y^k\|.
\]
Using \(\|\y^k\|\le R_{\y}\), we obtain
\begin{align*}
\dist\left(
\bz,
-\nabla_{\y}\mathcal L_\rho(\x^{k+1},\y^{k+1})
+
\mathcal{N}_{\mathbb R^d_+}(\y^{k+1})
\right)
&\le
(\alpha^{-1}+\rho^{-1})\|\y^{k+1}-\y^k\|
+
R_{\y}r_{\y}\\
&\le
\left[
(\alpha^{-1}+\rho^{-1})(1+\alpha L_G\zeta)
+
R_{\y}
\right]\epsilon,
\end{align*}
where the last inequality follows from \eqref{eq:yk1-yk-bound} and
\(r_{\y}\le\epsilon\).
Taking
\[
\tau
:=
\max\left\{
r_{\x}
\left[
\zeta
+
\sigma_2(1+\alpha L_G\zeta)
+
\sigma_1
\right],
\,
(\alpha^{-1}+\rho^{-1})(1+\alpha L_G\zeta)+R_{\y}
\right\}
\]
completes the proof.
\end{proof}

\begin{proof}[Proof of Theorem \ref{thm:general}]
Let \(\mathcal{J}_K\subseteq\{0,\ldots,K-1\}\) be the set of good
indices provided by Proposition~\ref{prop:dual-bd-ybd-linearized}. Then
$
|\mathcal{J}_K|\ge N_K>0,
$ 
and for every \(k\in\mathcal{J}_K\), we have 
$\|\y^k\|_1<R_{\y}$ and 
$
\|\y^{k+1}\|_1<R_{\y}.
$

By Proposition~\ref{prop:suff-decrease} and the update
$
\z^{k+1}-\z^k=\beta(\x^{k+1}-\z^k)$,
we have
\[
\Phi^k-\Phi^{k+1}
\ge
c_\beta
\left(
\|\x^{k}-\x^{k+1}\|^2
+
\|\y^{k}-\y_+^k(\z^k)\|^2
+
\|\x^{k+1}-\z^k\|^2
\right).
\]
Summing over \(k\in\mathcal{J}_K\), and using
\(\Phi^K\ge f_{\min}\), gives
\[
\Phi^0-f_{\min}
\ge
c_\beta
\sum_{k\in\mathcal{J}_K}
\left(
\|\x^{k}-\x^{k+1}\|^2
+
\|\y^{k}-\y_+^k(\z^k)\|^2
+
\|\x^{k+1}-\z^k\|^2
\right).
\]
Hence there exists \(k\in\mathcal{J}_K\) such that
\begin{equation}\label{eq:selected-good-index}
\max\left\{
\|\x^{k}-\x^{k+1}\|,
\|\y^{k}-\y_+^k(\z^k)\|,
\|\x^{k+1}-\z^k\|
\right\}
\le
\sqrt{
\frac{\Phi^0-f_{\min}}{c_\beta N_K}
}
\le
\varepsilon_K .
\end{equation}
By the definition of \(\varepsilon_K\), we also have
$
r_{\y}\le \varepsilon_K$.
Combining this with \eqref{eq:selected-good-index}, we obtain
\[
\max\left\{
\|\x^{k}-\x^{k+1}\|,
\|\y^{k}-\y_+^k(\z^k)\|,
\|\x^{k+1}-\z^k\|,
r_{\y}
\right\}
\le
\varepsilon_K .
\]
Therefore, Lemma~\ref{lemma-episolcol} implies that
$
(\x^{k+1},\y^{k+1})$
is an \(\mathcal O(\varepsilon_K)\)-KKT point of
problem~\eqref{eq:problem}.

For the first rate statement, since
$
r_{\y}=\Theta(K^{-1/3})$,
$\beta=\Theta(K^{-1/3})$,
and $
\xi=\Theta(K^{-2/3})$,
we know that
$
c_\beta=\Theta(\beta)=\Theta(K^{-1/3})$,
and hence
$
\sqrt{
\frac{\Phi^0-f_{\min}}{c_\beta N_K}
}
=
\mathcal O(K^{-1/3})$.
Together with \(r_{\y}=\Theta(K^{-1/3})\), this gives
$
\varepsilon_K=\mathcal O(K^{-1/3})$.
For the second rate statement, under $r_{\y}=0$, $\beta=\Theta(1)$, and $\xi=\Theta(1)$, we have $c_\beta=\Theta(1)$. Hence,
$
\varepsilon_K
=
\mathcal O(K^{-1/2})$.
The proof is complete.
\end{proof}

\section*{Acknowledgments}
 Jiajin Li was supported by a Natural Sciences and Engineering Research Council of Canada Discovery Grant RGPIN-2025-05817.

\bibliography{ref}

@article{Robinson1980,
  author  = {Robinson, Stephen M.},
  title   = {Strongly Regular Generalized Equations},
  journal = {Mathematics of Operations Research},
  volume  = {5},
  number  = {1},
  pages   = {43--62},
  year    = {1980}
}

@book{BonnansShapiro2000,
  author    = {Bonnans, J. Fr{\'e}d{\'e}ric and Shapiro, Alexander},
  title     = {Perturbation Analysis of Optimization Problems},
  publisher = {Springer},
  year      = {2000}
}

@article{andreani2012relaxed,
  title={A relaxed constant positive linear dependence constraint qualification and applications},
  author={Andreani, Roberto and Haeser, Gabriel and Schuverdt, Maria Laura and Silva, Paulo JS},
  journal={Mathematical Programming},
  volume={135},
  number={1},
  pages={255--273},
  year={2012},
  publisher={Springer}
}

@article{andreani2025relaxed,
  title={A relaxed quasinormality condition and the boundedness of dual augmented {L}agrangian sequences},
  author={Andreani, Roberto and Haeser, Gabriel and Schuverdt, Maria Laura and Secchin, Leornardo Delarmelina},
  journal={SIAM Journal on Optimization},
  volume={35},
  number={4},
  pages={2474--2489},
  year={2025},
  publisher={SIAM}
}

@article{lewis2016proximal,
  title={A proximal method for composite minimization},
  author={Lewis, Adrian S. and Wright, Stephen J.},
  journal={Mathematical Programming},
  volume={158},
  number={1},
  pages={501--546},
  year={2016},
  publisher={Springer}
}

@article{li2026smoothing,
  title={Smoothing Meets Perturbation: Unified and Tight Analysis for Nonconvex-Concave Minimax Optimization},
  author={Li, Jiajin and Nagarajan, Mahesh and Pan, Siyu and Zhang, Nanxi},
  journal={arXiv preprint arXiv:2602.14185},
  year={2026}
}

@article{rockafellar2000optimization,
  title={Optimization of conditional value-at-risk},
  author={Rockafellar, R. Tyrrell and Uryasev, Stanislav},
  journal={Journal of Risk},
  volume={2},
  pages={21--42},
  year={2000}
}

@article{liu2025single,
  title   = {A {SPIDER}-type stochastic subgradient method for expectation-constrained nonconvex nonsmooth optimization},
author  = {Liu, Wei and Xu, Yangyang},
  journal = {SIAM Journal on Optimization},
  volume  = {36},
  number  = {2},
  pages   = {1125--1153},
  year    = {2026},
  publisher={SIAM}
}

@article{yang2025single,
  author  = {Yang, Ming and Li, Gang and Hu, Quanqi and Lin, Qihang and Yang, Tianbao},
  title   = {Single-loop algorithms for stochastic nonconvex optimization with weakly convex constraints},
  journal = {Transactions on Machine Learning Research},
  year    = {2026}
}

@article{he2023newton,
  title={A {N}ewton-{CG} based augmented {L}agrangian method for finding a second-order stationary point of nonconvex equality constrained optimization with complexity guarantees},
  author={He, Chuan and Lu, Zhaosong and Pong, Ting Kei},
  journal={SIAM Journal on Optimization},
  volume={33},
  number={3},
  pages={1734--1766},
  year={2023},
  publisher={SIAM}
}

@article{jia2025first,
  title={First-order methods for nonsmooth nonconvex functional constrained optimization with or without slater points},
  author={Jia, Zhichao and Grimmer, Benjamin},
  journal={SIAM Journal on Optimization},
  volume={35},
  number={2},
  pages={1300--1329},
  year={2025},
  publisher={SIAM}
}

@article{dahal2026damped,
  title={Damped proximal augmented {L}agrangian method for weakly-convex problems with convex constraints},
  author={Dahal, Hari and Liu, Wei and Xu, Yangyang},
  journal={Mathematical Programming Computation},
  pages={1--50},
  year={2026},
  publisher={Springer}
}

@article{li2024stochastic,
  title={Stochastic inexact augmented {L}agrangian method for nonconvex expectation constrained optimization},
  author={Li, Zichong and Chen, Pin-Yu and Liu, Sijia and Lu, Songtao and Xu, Yangyang},
  journal={Computational Optimization and Applications},
  volume={87},
  number={1},
  pages={117--147},
  year={2024},
  publisher={Springer}
}

@article{lin2022complexity,
  title={Complexity of an inexact proximal-point penalty method for constrained smooth non-convex optimization},
  author={Lin, Qihang and Ma, Runchao and Xu, Yangyang},
  journal={Computational Optimization and Applications},
  volume={82},
  number={1},
  pages={175--224},
  year={2022},
  publisher={Springer}
}

@inproceedings{ma2020quadratically,
  title={Quadratically regularized subgradient methods for weakly convex optimization with weakly convex constraints},
  author={Ma, Runchao and Lin, Qihang and Yang, Tianbao},
  booktitle={International Conference on Machine Learning},
  pages={6554--6564},
  year={2020},
  organization={PMLR}
}

@article{boob2023stochastic,
  title={Stochastic first-order methods for convex and nonconvex functional constrained optimization},
  author={Boob, Digvijay and Deng, Qi and Lan, Guanghui},
  journal={Mathematical Programming},
  volume={197},
  number={1},
  pages={215--279},
  year={2023},
  publisher={Springer}
}

@article{huang2025inexact,
  title={Inexact {M}oreau Envelope {L}agrangian Method for Non-Convex Constrained Optimization under Local Error Bound Conditions on Constraint Functions},
  author={Huang, Yankun and Lin, Qihang and Xu, Yangyang},
  journal={arXiv preprint arXiv:2502.19764},
  year={2025}
}

@article{zhang2020single,
	title={A single-loop smoothed gradient descent-ascent algorithm for nonconvex-concave min-max problems},
	author={Zhang, Jiawei and Xiao, Peijun and Sun, Ruoyu and Luo, Zhiquan},
	journal={Advances in Neural Information Processing Systems},
	volume={33},
	pages={7377--7389},
	year={2020}
}

@book{ben2009robust,
  title={Robust Optimization},
  author={Ben-Tal, Aharon and El Ghaoui, Laurent and Nemirovski, Arkadi},
  volume={28},
  year={2009},
  publisher={Princeton University Press}
}

@article{bertsimas2011theory,
  title={Theory and applications of robust optimization},
  author={Bertsimas, Dimitris and Brown, David B. and Caramanis, Constantine},
  journal={SIAM review},
  volume={53},
  number={3},
  pages={464--501},
  year={2011},
  publisher={SIAM}
}

@article{davis2019stochastic,
  title={Stochastic model-based minimization of weakly convex functions},
  author={Davis, Damek and Drusvyatskiy, Dmitriy},
  journal={SIAM Journal on Optimization},
  volume={29},
  number={1},
  pages={207--239},
  year={2019},
  publisher={SIAM}
}

@article{drusvyatskiy2019efficiency,
  title={Efficiency of minimizing compositions of convex functions and smooth maps},
  author={Drusvyatskiy, Dmitriy and Paquette, Courtney},
  journal={Mathematical Programming},
  volume={178},
  pages={503--558},
  year={2019},
  publisher={Springer}
}

@inproceedings{alacaoglu2024complexity,
  title={Complexity of single loop algorithms for nonlinear programming with stochastic objective and constraints},
  author={Alacaoglu, Ahmet and Wright, Stephen J},
  booktitle={International Conference on Artificial Intelligence and Statistics},
  pages={4627--4635},
  year={2024},
  organization={PMLR}
}

@article{bolte2018nonconvex,
  title={Nonconvex {L}agrangian-based optimization: monitoring schemes and global convergence},
  author={Bolte, J{\'e}r{\^o}me and Sabach, Shoham and Teboulle, Marc},
  journal={Mathematics of Operations Research},
  volume={43},
  number={4},
  pages={1210--1232},
  year={2018},
  publisher={INFORMS}
}

@article{pu2024smoothed,
  title={Smoothed Proximal {L}agrangian Method for Nonlinear Constrained Programs},
  author={Pu, Wenqiang and Sun, Kaizhao and Zhang, Jiawei},
  journal={arXiv preprint arXiv:2408.15047},
  year={2024}
}

@article{xie2021complexity,
  title={Complexity of proximal augmented {L}agrangian for nonconvex optimization with nonlinear equality constraints},
  author={Xie, Yue and Wright, Stephen J.},
  journal={Journal of Scientific Computing},
  volume={86},
  pages={1--30},
  year={2021},
  publisher={Springer}
}

@article{zhu2024first,
  title={A first-order primal-dual method for nonconvex constrained optimization based on the augmented {L}agrangian},
  author={Zhu, Daoli and Zhao, Lei and Zhang, Shuzhong},
  journal={Mathematics of Operations Research},
  volume={49},
  number={1},
  pages={125--150},
  year={2024},
  publisher={INFORMS}
}

@article{rockafellar1973dual,
  author  = {Rockafellar, R. Tyrrell},
  title   = {A dual approach to solving nonlinear programming problems by unconstrained optimization},
  journal = {Mathematical Programming},
  volume  = {5},
  number  = {1},
  pages   = {354--373},
  year    = {1973}
}

@article{boyd2011distributed,
  author  = {Boyd, Stephen and Parikh, Neal and Chu, Eric and Peleato, Borja and Eckstein, Jonathan},
  title   = {Distributed Optimization and Statistical Learning via the Alternating Direction Method of Multipliers},
  journal = {Foundations and Trends in Machine Learning},
  volume  = {3}, number = {1}, pages = {1--122}, year = {2011}
}

@article{rockafellar1976monotone,
  author  = {Rockafellar, R. Tyrrell},
  title   = {Monotone Operators and the Proximal Point Algorithm},
  journal = {SIAM Journal on Control and Optimization},
  volume  = {14}, number = {5}, pages = {877--898}, year = {1976}
}

@article{hestenes1969multiplier,
  title={Multiplier and gradient methods},
  author={Hestenes, Magnus R.},
  journal={Journal of Optimization Theory and Applications},
  volume={4},
  number={5},
  pages={303--320},
  year={1969},
  publisher={Springer}
}

@article{powell1969method,
  title={A method for nonlinear constraints in minimization problems},
     author    = {Powell, Michael J. D.},
  journal={Optimization},
  pages={283--298},
  year={1969},
  publisher={Academic Press}
}

@article{li2025nonsmooth,
  title={Nonsmooth nonconvex--nonconcave minimax optimization: Primal--dual balancing and iteration complexity analysis},
  author={Li, Jiajin and Zhu, Linglingzhi and So, Anthony Man-Cho},
  journal={Mathematical Programming},
  volume={214},
  number={1-2},
  pages={591--641},
  year={2025},
  publisher={Springer Berlin Heidelberg Berlin/Heidelberg}
}

@article{zhu2026primal,
  title={Primal-Dual Methods for Nonsmooth Nonconvex Optimization with Orthogonality Constraints},
  author={Zhu, Linglingzhi and Ding, Wentao and Liu, Shangyuan and So, Anthony Man-Cho},
  journal={arXiv preprint arXiv:2604.04130},
  year={2026}
}

@article{xu2017first,
  title={First-order methods for constrained convex programming based on linearized augmented {L}agrangian function},
  author={Xu, Yangyang},
  journal={INFORMS Journal on Optimization},
  volume={3},
  number={1},
  pages={89--117},
  year={2021},
  publisher={INFORMS}
}

@book{dontchev2009implicit,
  title={Implicit Functions and Solution Mappings},
  author={Dontchev, Asen L. and Rockafellar, R. Tyrrell},
  volume={543},
  year={2009},
  publisher={Springer}
}

@article{burke2020strong,
  title={Strong metric (sub)regularity of {K}arush--{K}uhn--{T}ucker mappings for piecewise linear-quadratic convex-composite optimization and the quadratic convergence of {N}ewton's method},
  author={Burke, James V. and Engle, Abraham},
  journal={Mathematics of Operations Research},
  volume={45},
  number={3},
  pages={1164--1192},
  year={2020},
  publisher={INFORMS}
}
\bibliographystyle{plain}
\appendix 
\section{Useful Technical Lemmas} \label{sec:lemmas}

To begin with, we introduce the  weakly convex function which plays an important role in our following analysis.
\begin{definition}
The function $\ell: \R^{n} \rightarrow \R$ is $\rho$-weakly convex on $\mathcal{X}\subseteq \R^n$ if for any $\x, \y \in \mathcal{X}$ and $\tau \in[0,1]$,
$$
\ell(\tau \x+(1-\tau) \y) \leq \tau \ell(\x)+(1-\tau) \ell(\y)+\frac{\rho \tau(1-\tau)}{2}\|\x-\y\|^{2}.
$$
When $\ell$ is locally Lipschitz, it is equivalent to $\ell+\frac{\rho}{2}\|\cdot\|^{2}$ is convex on $\mathcal{X}$.
\end{definition}

By assumption of the problem \eqref{eq:problem} (recalling $L=L_h L_c$ and $r_{\x}> L_{\rho}$) with Lemma \ref{lem: weakcvx} and \cite[Lemma 3.2, Lemma 4.2]{drusvyatskiy2019efficiency}, we directly have the following useful result.

\begin{fact}\label{fact-2}
Let $\y\in\Y$. For all $\x, \bar{\x} \in\mathcal{X}$ it follows that
\[
-\frac{L_{\rho}+\lambda^{-1}}{2}\|\x-\bar{\x}\|^{2} \leq \mathcal{L}_{\rho}(\x,\y)-F_{\bar{\x},\lambda}(\x, \y) \leq \frac{L_{\rho}-\lambda^{-1}}{2}\|\x-\bar{\x}\|^{2}.
\]
\end{fact}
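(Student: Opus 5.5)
The statement to prove is Fact~\ref{fact-2}: a two-sided bound on $\mathcal L_\rho(\x,\y)-F_{\bar\x,\lambda}(\x,\y)$. The plan is to reduce it to the standard prox-linear model estimate of \cite[Lemma 3.2]{drusvyatskiy2019efficiency}, applied once to the whole augmented Lagrangian rather than term by term. First I would rewrite $\mathcal L_\rho(\cdot,\y)$ as a single convex-composite function $H(C(\x))$. Take $C(\x):=(c_0(\x),c_1(\x),\ldots,c_d(\x))$ and
\[
H(\w_0,\ldots,\w_d):=h_0(\w_0)+\frac{\rho}{2}\sum_{i=1}^d\Bigl[\Bigl(\frac{y_i}{\rho}+h_i(\w_i)\Bigr)_+^2-\frac{y_i^2}{\rho^2}\Bigr].
\]
Since $t\mapsto(\frac{y_i}{\rho}+t)_+^2$ is convex and nondecreasing and $h_i$ is convex, $H$ is convex. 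By construction, $F_{\bar\x,\lambda}(\x,\y)-\frac{1}{2\lambda}\|\x-\bar\x\|^2=H(C(\bar\x)+\nabla C(\bar\x)^\top(\x-\bar\x))$ is exactly the prox-linear model.

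Next I would use convexity of $H$ for the upper bound on the model. For any $\v\in\partial H(C(\bar\x)+\nabla C(\bar\x)^\top(\x-\bar\x))$ we have
\[
H(C(\x))\ge H(\text{model})+\langle \v, C(\x)-C(\bar\x)-\nabla C(\bar\x)^\top(\x-\bar\x)\rangle,
\]
and a symmetric inequality holds with a subgradient taken at $C(\x)$. The Taylor remainder has size at most $\frac{L_c}{2}\|\x-\bar\x\|^2$ blockwise. The subgradients are bounded as follows: the $i$-th block of $\v$ is $(y_i+\rho h_i(\cdot))_+\s_i$ with $\|\s_i\|\le L_h$. Moreover, $(y_i+\rho h_i(\cdot))_+\le y_i+\rho|h_i(\cdot)|$, and by Assumption~\ref{ass:basic}(ii) the sum of $|h_i|$ over linearized arguments is at most $R_{\x}$. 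So the total effective Lipschitz weight is bounded by $L_hL_c(1+R_{\y}+\rho R_{\x})\le L_\rho$. This gives
\[
|\mathcal L_\rho(\x,\y)-H(\text{model})|\le \frac{L_\rho}{2}\|\x-\bar\x\|^2.
\]
Subtracting the $\frac{1}{2\lambda}\|\x-\bar\x\|^2$ term then yields both inequalities: the lower side gives $-\frac{L_\rho+\lambda^{-1}}{2}$ and the upper side gives $\frac{L_\rho-\lambda^{-1}}{2}$.

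The main obstacle is bounding the subgradient multiplier. At the point $C(\x)$ the bound should use the actual values $h_i(c_i(\x))$ rather than linearized ones. I would handle this by evaluating subgradients at the linearized point in both directions, using the one-sided convexity inequality in each direction, so that only the $R_{\x}$ bound on linearized values is needed. This is consistent with how $L_\rho$ was derived in Lemma~\ref{lem: weakcvx}. If one direction needs true values, I would absorb the difference by enlarging $L$, as the paper permits.
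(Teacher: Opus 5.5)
Your argument is the paper's route. The paper writes $\mathcal L_\rho(\cdot,\y)$ as the single convex composite $H_{\y}\circ(c_0,\ldots,c_d)$ (proof of Lemma~\ref{lem: weakcvx}) and applies the two-sided prox-linear model estimate \cite[Lemma~3.2]{drusvyatskiy2019efficiency} with effective modulus $L(1+R_{\y}+\rho R_{\x})=L_\rho$, exactly as you do.

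The obstacle you flag dissolves. The maximum in Assumption~\ref{ass:basic}(ii) ranges over all pairs, including $\x=\bar\x$, so $\sum_i|h_i(c_i(\x))|\le R_{\x}$, and the subgradient at $C(\x)$ needed for the upper bound is controlled directly. Your first proposed fix would not work as stated, since subgradients taken only at the model point give just the lower bound. Enlarging $L$ is therefore unnecessary.
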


The following three lemmas are needed in our analysis. Their proofs are
analogous to those of \cite[Lemmas~A.2--A.3 and Proposition~2]{li2025nonsmooth},
respectively.

\begin{lemma}\label{lemma-sollip}
For any
\(\y, \y^{\prime} \in \mathcal{Y}\) and
\(\z, \z^{\prime} \in \R^{n}\), the following inequalities hold:
    \begin{align}
        &\|\x(\y, \z)-\x(\y, \z^{\prime})\|
        \leq \sigma_{1}\|\z-\z^{\prime}\|, \label{lip-z}\\
        &\|\x^\star(\z)-\x^\star(\z^{\prime})\|
        \leq \sigma_{1}\|\z-\z^{\prime}\|, \label{lip-starz}\\
        &\|\x(\y, \z)-\x(\y^{\prime}, \z)\|
        \leq \sigma_{2}\|\y-\y^{\prime}\|. \label{lip-y}
    \end{align}
where $\sigma_{1}, \sigma_{2}$ are defined in \eqref{eq:constants}.
\end{lemma}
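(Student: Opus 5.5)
We prove the three Lipschitz bounds of Lemma~\ref{lemma-sollip} from the strong convexity of \(F(\cdot,\y,\z)\) on \(\mathcal X\), with modulus \(\mu:=r_{\x}-L_\rho\) (Lemma~\ref{lem: weakcvx}). The argument uses the first-order optimality conditions of the two minimization problems and a monotonicity argument.

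For \eqref{lip-z}, fix \(\y\) and set \(\x:=\x(\y,\z)\), \(\x':=\x(\y,\z')\). The optimality conditions read
\[
\bz\in\partial_{\x}\mathcal L_\rho(\x,\y)+r_{\x}(\x-\z)+\mathcal N_{\mathcal X}(\x),
\qquad
\bz\in\partial_{\x}\mathcal L_\rho(\x',\y)+r_{\x}(\x'-\z')+\mathcal N_{\mathcal X}(\x').
\]
Since \(\mathcal L_\rho(\cdot,\y)+\frac{r_{\x}}{2}\|\cdot\|^2\) is \(\mu\)-strongly convex, its subdifferential plus the normal cone is \(\mu\)-strongly monotone. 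Subtracting the two conditions and pairing with \(\x-\x'\) gives
\[
\mu\|\x-\x'\|^2\le r_{\x}\langle \z-\z',\x-\x'\rangle\le r_{\x}\|\z-\z'\|\,\|\x-\x'\|,
\]
hence \(\|\x-\x'\|\le \frac{r_{\x}}{\mu}\|\z-\z'\|=\sigma_1\|\z-\z'\|\).

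For \eqref{lip-y}, fix \(\z\) and compare \(\y\) with \(\y'\). The \(\y\)-dependence enters only through \(\mathcal L_\rho\). By strong convexity, \(F(\x',\y,\z)-F(\x,\y,\z)\ge\frac{\mu}{2}\|\x-\x'\|^2\), and symmetrically with \(\y'\). Adding the two inequalities gives
\[
\mu\|\x-\x'\|^2\le [\mathcal L_\rho(\x',\y)-\mathcal L_\rho(\x',\y')]-[\mathcal L_\rho(\x,\y)-\mathcal L_\rho(\x,\y')].
\]
Define \(\psi(\x):=\mathcal L_\rho(\x,\y)-\mathcal L_\rho(\x,\y')\). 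By the mean value theorem in \(\y\),
\[
\psi(\x)=\int_0^1\langle \nabla_{\y}\mathcal L_\rho(\x,\y'+t(\y-\y')),\y-\y'\rangle\,dt,
\qquad
\nabla_{\y}\mathcal L_\rho(\x,\y)=\max\{G(\x),-\y/\rho\}.
\]
The map \(\x\mapsto\max\{G(\x),-\y/\rho\}\) is \(L_G\)-Lipschitz, because componentwise maximum with a constant is nonexpansive. Therefore \(|\psi(\x')-\psi(\x)|\le L_G\|\x-\x'\|\,\|\y-\y'\|\), and dividing by \(\|\x-\x'\|\) yields \(\|\x-\x'\|\le \frac{L_G}{\mu}\|\y-\y'\|=\sigma_2\|\y-\y'\|\).

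For \eqref{lip-starz}, the selection \(\y(\z)\) may be non-unique, so we argue through the saddle structure, following \cite[Lemma A.3]{li2025nonsmooth}. The function \(F(\cdot,\cdot,\z)\) is strongly convex in \(\x\) and concave in \(\y\) over compact \(\mathcal X\times\mathcal Y\). Sion's minimax theorem gives \(p(\z)=\min_{\x}\max_{\y}F\), and \(\x^\star(\z)\) is the unique minimizer of the strongly convex function \(\x\mapsto\max_{\y\in\mathcal Y}F(\x,\y,\z)\). This makes \(\x^\star(\z)\) well defined independently of the selection. Applying the same strong-monotonicity computation as in the first step to this max-function gives \eqref{lip-starz}: its \(\z\)-dependence is again only through \(\frac{r_{\x}}{2}\|\x-\z\|^2\), and it remains \(\mu\)-strongly convex as a supremum of \(\mu\)-strongly convex functions.

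The main obstacle is this last step. We must justify that \(\x(\y(\z),\z)\) coincides with the minimizer of the max-function for every selection \(\y(\z)\in\mathcal Y(\z)\). This follows from the saddle-point property, since any saddle pair has its \(\x\)-component equal to the unique minimizer. The regularity of the subdifferential sum rule also needs care, but it holds because \(\mathcal L_\rho(\cdot,\y)\) is weakly convex and Lipschitz on \(\mathcal X\).
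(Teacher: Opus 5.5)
Your proof is correct. For \eqref{lip-y} it is essentially the paper's argument. You use quadratic growth of the $(r_{\x}-L_\rho)$-strongly convex function $F(\cdot,\y,\z)$ at its minimizer, applied twice and added. You then use the integral representation in $\y$ and the nonexpansiveness of $\x\mapsto\max\{G(\x),-\y_t/\rho\}$. Working with $\mathcal L_\rho$ instead of $F$ changes nothing, since the $r_{\y}$ and $r_{\x}$ terms cancel.

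For \eqref{lip-z} and \eqref{lip-starz}, the paper gives no argument and simply cites \cite[Lemma A.1]{li2025nonsmooth}. Your treatment is therefore a self-contained replacement rather than a competing route, and it adds something. The paper asserts $\x^\star(\z)=\x(\y(\z),\z)$ for ``any selection'' without justifying that this is independent of the selection. Your Sion/saddle-point step supplies exactly this, using the concavity in $\y$ from Lemma~\ref{lem: weakcvx}. Set $\Theta(\x):=\max_{\y\in\mathcal Y}[\mathcal L_\rho(\x,\y)-\frac{r_{\y}}{2}\|\y\|^2]$ and let $\bar\x$ be the minimizer of $\Theta(\cdot)+\frac{r_{\x}}{2}\|\cdot-\z\|^2$ over $\mathcal X$. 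Then every $\bar\y\in\mathcal Y(\z)$ forms a saddle pair with $\bar\x$. Hence $\bar\x$ minimizes $F(\cdot,\bar\y,\z)$, and by uniqueness $\bar\x=\x(\bar\y,\z)$.

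One simplification: you do not need strong monotonicity or the subdifferential sum rule. The quadratic-growth trick you already use for \eqref{lip-y} handles \eqref{lip-z} directly. With $\x=\x(\y,\z)$ and $\x'=\x(\y,\z')$,
\[
(r_{\x}-L_\rho)\|\x-\x'\|^2
\le
\bigl[F(\x',\y,\z)-F(\x,\y,\z)\bigr]+\bigl[F(\x,\y,\z')-F(\x',\y,\z')\bigr]
=
r_{\x}\langle \x-\x',\z-\z'\rangle .
\]
The same computation with $\Theta$ in place of $\mathcal L_\rho(\cdot,\y)$ gives \eqref{lip-starz}. Here $\Theta+\frac{r_{\x}}{2}\|\cdot\|^2$ is $(r_{\x}-L_\rho)$-strongly convex on $\mathcal X$, as a supremum of functions with that common modulus. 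This route also avoids having to say what $\partial\mathcal L_\rho(\cdot,\y)$ means when weak convexity is only asserted on $\mathcal X$.
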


\begin{proof}
The proof of \eqref{lip-z} and \eqref{lip-starz} can be found in
\cite[Lemma A.1]{li2025nonsmooth}, applied with the weak convexity
modulus \(L_\rho\). Now, we prove \eqref{lip-y}. We have that
\begin{align}
&F(\x(\y^{\prime}, \z),\y, \z)
-F(\x(\y, \z), \y,\z)
\geq
\frac{r_{\x}-L_\rho}{2}
\|\x(\y^{\prime}, \z)-\x(\y, \z)\|^{2},
\label{psi-strongcvxineq1} \\
&F(\x(\y, \z), \y^{\prime},\z)
-F(\x(\y^{\prime}, \z), \y^{\prime},\z)
\geq
\frac{r_{\x}-L_\rho}{2}
\|\x(\y, \z)-\x(\y^{\prime}, \z)\|^{2}.
\label{psi-strongcvxineq2}
\end{align}
Adding \eqref{psi-strongcvxineq1} and \eqref{psi-strongcvxineq2}, we get
\begin{align}\label{lip-key2}
&(r_{\x}-L_\rho)
\|\x(\y, \z)-\x(\y^{\prime}, \z)\|^{2} \notag\\
\leq\ &
F(\x(\y^{\prime},\z),\y,\z)
-F(\x(\y^{\prime},\z),\y^{\prime},\z)
+
F(\x(\y,\z),\y^{\prime},\z)
-F(\x(\y,\z),\y,\z).
\end{align}
Let
$
\Delta_{\y}:=\y^{\prime}-\y$ and
$\y_t:=\y+t\Delta_{\y}, t\in[0,1]$.
Since \(F\) is continuously differentiable with respect to \(\y\), the
right-hand side of \eqref{lip-key2} can be written as
\begin{align*}
&
F(\x(\y^{\prime},\z),\y,\z)
-F(\x(\y^{\prime},\z),\y^{\prime},\z)
+
F(\x(\y,\z),\y^{\prime},\z)
-F(\x(\y,\z),\y,\z)\\
=\ &
\int_0^1
\left\langle
\nabla_{\y}F(\x(\y,\z),\y_t,\z)
-
\nabla_{\y}F(\x(\y^{\prime},\z),\y_t,\z),
\Delta_{\y}
\right\rangle dt .
\end{align*}
For every fixed \(t\in[0,1]\), we have
$
\nabla_{\y}F(\x,\y_t,\z)
=
\max\{G(\x),-\frac{\y_t}{\rho}\}
-r_{\y}\y_t$.
Therefore, by the nonexpansiveness of the componentwise maximum map and
the \(L_G\)-Lipschitz continuity of \(G\), we obtain
\begin{align*}
&
\|
\nabla_{\y}F(\x(\y,\z),\y_t,\z)
-
\nabla_{\y}F(\x(\y^{\prime},\z),\y_t,\z)\| \\
=\ &
\left\|
\max\left\{G(\x(\y,\z)),-\frac{\y_t}{\rho}\right\}
-
\max\left\{G(\x(\y^{\prime},\z)),-\frac{\y_t}{\rho}\right\}
\right\|\\
\leq\ &
\|G(\x(\y,\z))-G(\x(\y^{\prime},\z))\|\\
\leq\ & L_G\|\x(\y,\z)-\x(\y^{\prime},\z)\|.
\end{align*}
Combining this estimate with \eqref{lip-key2} gives
\begin{align*}
&(r_{\x}-L_\rho)
\|\x(\y, \z)-\x(\y^{\prime}, \z)\|^{2}
\leq L_G\|\x(\y^{\prime}, \z)-\x(\y, \z)\|
\|\y-\y^{\prime}\|.
\end{align*}
If \(\y=\y^{\prime}\) or
\(\x(\y,\z)=\x(\y^{\prime},\z)\), the claim is trivial. Otherwise,
dividing both sides by
\(\|\x(\y^{\prime}, \z)-\x(\y, \z)\|\) yields
\eqref{lip-y} with
$
\sigma_{2}=\frac{L_G}{r_{\x}-L_\rho}$.
The proof is complete.
\end{proof}

\begin{lemma}\label{lemma-dualdiff} 
    The dual function \(d(\cdot,\cdot)\) is differentiable on
    \(\mathcal{Y}\times \R^{n}\), and for each
    \(\y\in\mathcal{Y}\), \(\z\in\R^{n}\),
    \begin{align*}
    \nabla_{\y} d(\y,\z)
    &=
    \nabla_{\y} F(\x(\y,\z),\y,\z) =
    \max\left\{
    G(\x(\y,\z)),
    -\frac{\y}{\rho}
    \right\}
    -r_{\y}\y,\\
    \nabla_{\z} d(\y,\z)
    &=
    \nabla_{\z} F(\x(\y, \z),\y, \z)
    =
    r_{\x}(\z-\x(\y,\z)).
    \end{align*}
 Moreover, $\nabla d(\cdot,\cdot)$ is Lipschitz continuous, i.e.,
    \begin{align}
    &\|\nabla_{\y} d(\y^{\prime},\z)
    -\nabla_{\y} d(\y^{\prime \prime},\z)\|
    \leq
    L_{d}\|\y^{\prime}-\y^{\prime \prime}\|,
    \quad
    \text{for all } \y^{\prime}, \y^{\prime \prime} \in \mathcal{Y},
    \notag\\
    &\|\nabla_{\z} d(\y,\z')
    -\nabla_{\z} d(\y,\z'')\|
    \leq
    L_{d}'\|\z^{\prime}-\z^{\prime \prime}\|,
    \quad
    \text{for all } \z^{\prime}, \z^{\prime \prime} \in \R^{n},
    \notag
    \end{align}
    with
    $
    L_{d}:=L_G\sigma_{2}+\frac{1}{\rho}+r_{\y}$ and
    $L_{d}':=(\sigma_1+1)r_{\x}$.
\end{lemma}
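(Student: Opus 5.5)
We treat the two gradient formulas and the two Lipschitz bounds separately. For differentiability we use a Danskin-type argument. For each fixed \((\y,\z)\in\mathcal Y\times\R^n\), the function \(F(\cdot,\y,\z)\) is \((r_{\x}-L_\rho)\)-strongly convex on the compact set \(\mathcal X\), so the minimizer \(\x(\y,\z)\) is unique. Lemma~\ref{lemma-sollip} makes \(\x(\cdot,\cdot)\) Lipschitz in each argument, hence jointly continuous. The map \((\y,\z)\mapsto F(\x,\y,\z)\) is continuously differentiable, uniformly for \(\x\in\mathcal X\), with
\[
\nabla_{\y}F=\max\{G(\x),-\y/\rho\}-r_{\y}\y,\qquad \nabla_{\z}F=r_{\x}(\z-\x).
\]
For the first of these, write \(\frac{\rho}{2}[(y_i/\rho+G_i)_+^2-y_i^2/\rho^2]\); its derivative in \(y_i\) is \((y_i/\rho+G_i)_+-y_i/\rho=\max\{G_i,-y_i/\rho\}\). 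Danskin's theorem for a min over a compact set with a unique minimizer then gives \(\nabla d(\y,\z)=\nabla_{(\y,\z)}F(\x(\y,\z),\y,\z)\). Substituting yields both displayed formulas, and continuity of \(\x(\cdot,\cdot)\) shows the gradients are continuous. If one wants to avoid invoking Danskin directly, the same conclusion follows from the sandwich
\[
F(\x(\y,\z),\y',\z')-F(\x(\y,\z),\y,\z)\;\ge\; d(\y',\z')-d(\y,\z)\;\ge\; F(\x(\y',\z'),\y',\z')-F(\x(\y',\z'),\y,\z),
\]
together with a first-order expansion and the Lipschitz continuity of \(\x(\cdot,\cdot)\).

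For the \(\y\)-Lipschitz bound, fix \(\z\) and write \(\x'=\x(\y',\z)\), \(\x''=\x(\y'',\z)\). The triangle inequality splits the difference into three terms:
\[
\|\max\{G(\x'),-\y'/\rho\}-\max\{G(\x''),-\y'/\rho\}\|+\|\max\{G(\x''),-\y'/\rho\}-\max\{G(\x''),-\y''/\rho\}\|+r_{\y}\|\y'-\y''\|.
\]
The componentwise maximum is 1-Lipschitz in each argument. The first term is therefore at most \(L_G\|\x'-\x''\|\le L_G\sigma_2\|\y'-\y''\|\) by \eqref{lip-y}, and the second is at most \(\rho^{-1}\|\y'-\y''\|\). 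Summing the three bounds gives \(L_d=L_G\sigma_2+1/\rho+r_{\y}\).

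For the \(\z\)-Lipschitz bound, we have
\[
\|r_{\x}(\z'-\x(\y,\z'))-r_{\x}(\z''-\x(\y,\z''))\|\le r_{\x}\|\z'-\z''\|+r_{\x}\sigma_1\|\z'-\z''\|
\]
by \eqref{lip-z}, which gives \(L_d'=(\sigma_1+1)r_{\x}\).

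The only delicate point is the Danskin step. The derivative in \(\y\) must be taken on \(\mathcal Y\), where boundary points require one-sided derivatives, but this is harmless because \(F\) is smooth in \(\y\) on all of \(\R^d\). One should also note that uniqueness of \(\x(\y,\z)\) relies on \(r_{\x}>L_\rho\) through the weak convexity in Lemma~\ref{lem: weakcvx}. Once those two points are recorded, everything else reduces to the direct estimates above.
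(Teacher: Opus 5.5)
Your proposal is correct and follows essentially the argument the paper intends. The paper gives no separate proof but defers to the analogous lemma in \cite{li2025nonsmooth}, which likewise gets the gradient formulas from Danskin's theorem using uniqueness of the strongly convex minimizer $\x(\y,\z)$, and then obtains $L_d$ and $L_d'$ from the solution-map bounds \eqref{lip-y} and \eqref{lip-z} of Lemma~\ref{lemma-sollip}, just as you do.
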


\begin{lemma}[Lipschitz-type primal error bound]
\label{prop:lip}
For any \(k\ge0\),
it holds that
\begin{equation*}\label{iter-result1}
\|\x^{k+1}-\x(\y^{k}, \z^{k})\|
\leq
\zeta\|\x^{k}-\x^{k+1}\|,
\end{equation*}
where $\zeta>0$ is the constant defined in Proposition  \ref{prop:decrease}.
\end{lemma}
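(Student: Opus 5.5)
We compare the prox-linear step $\x^{k+1}$ with the exact proximal minimizer $\hat\x:=\x(\y^k,\z^k)$ of $F(\cdot,\y^k,\z^k)$. Both are minimizers of strongly convex functions over $\mathcal X$, and these functions differ only through the model error controlled by Fact~\ref{fact-2}. We write $\Psi(\x):=F_{\x^k,\lambda}(\x,\y^k)+\frac{r_{\x}}{2}\|\x-\z^k\|^2$. This $\Psi$ is $(\lambda^{-1}+r_{\x})$-strongly convex, since the model is convex and carries the term $\frac{1}{2\lambda}\|\x-\x^k\|^2$. We also write $\Theta(\x):=F(\x,\y^k,\z^k)$, which is $(r_{\x}-L_\rho)$-strongly convex. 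Fact~\ref{fact-2}, applied with $\bar\x=\x^k$, sandwiches $\Theta-\Psi$ between $-\frac{L_\rho+\lambda^{-1}}{2}\|\x-\x^k\|^2-\frac{1}{2\lambda}\|\x-\x^k\|^2$ and $\frac{L_\rho-\lambda^{-1}}{2}\|\x-\x^k\|^2-\frac{1}{2\lambda}\|\x-\x^k\|^2$. Here the extra $\frac{1}{2\lambda}$ terms account for the proximal term already built into $F_{\x^k,\lambda}$; we adjust constants as needed.

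The plan has two steps.

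First, we use the growth of $\Theta$ at its minimizer $\hat\x$ to write
\[
\frac{r_{\x}-L_\rho}{2}\|\x^{k+1}-\hat\x\|^2\le \Theta(\x^{k+1})-\Theta(\hat\x).
\]
We split the right-hand side as
\[
[\Theta(\x^{k+1})-\Psi(\x^{k+1})]+[\Psi(\x^{k+1})-\Psi(\hat\x)]+[\Psi(\hat\x)-\Theta(\hat\x)].
\]
The middle bracket is $\le -\frac{\lambda^{-1}+r_{\x}}{2}\|\hat\x-\x^{k+1}\|^2\le0$, by strong convexity of $\Psi$ at its minimizer $\x^{k+1}$. The first bracket is at most $\frac{L_\rho}{2}\|\x^{k+1}-\x^k\|^2$. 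The third bracket is at most $\frac{L_\rho+2\lambda^{-1}}{2}\|\hat\x-\x^k\|^2$.

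Second, we split $\|\hat\x-\x^k\|\le\|\hat\x-\x^{k+1}\|+\|\x^{k+1}-\x^k\|$. This produces a quadratic inequality in $a:=\|\x^{k+1}-\hat\x\|$ with $b:=\|\x^k-\x^{k+1}\|$. The resulting $(L_\rho+\lambda^{-1})$ coefficient on $a^2$ must be absorbed. A cleaner route, which produces the stated shape of $\zeta$, is not to discard the middle bracket. Instead we keep the $\frac{\lambda^{-1}+L_\rho}{2}a^2$ coming from $\Psi$'s modulus together with $\Theta$'s modulus. This gives
\[
\frac{r_{\x}-L_\rho+\lambda^{-1}+L_\rho}{2}a^2\le \frac{L_\rho}{2}b^2+\frac{L_\rho+\lambda^{-1}}{2}(a+b)^2,
\]
possibly with modified constants.

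Rather than expanding the square, we proceed in two stages. We first bound the distance from $\x^{k+1}$ to the minimizer of $\Theta$ in terms of the auxiliary quantity $\|\hat\x-\x^k\|$. Taking square roots gives a bound of the form $a\le \sqrt{2L_\rho/(\lambda^{-1}+L_\rho)}\,b+\|\hat\x-\x^k\|$-type terms; this is the source of the factor $\bigl(\sqrt{2L_\rho/(\lambda^{-1}+L_\rho)}+1\bigr)$ in $\zeta$. The triangle inequality then converts $\|\hat\x-\x^k\|$ back into $a+b$. The ratio of moduli contributes the factor $\bigl(1+\frac{2(\lambda^{-1}+L_\rho)}{r_{\x}-L_\rho}\bigr)$.

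The main obstacle is the bookkeeping in this absorption step, namely getting a coefficient on $a$ that can be moved to the left-hand side without requiring smallness of $\lambda$. If it cannot be absorbed, we instead cite the analogous argument of \cite[Proposition~2]{li2025nonsmooth}. That argument carries over verbatim, because our $F_{\x^k,\lambda}$ satisfies the same two-sided model bound (Fact~\ref{fact-2}) with weak-convexity modulus $L_\rho$.
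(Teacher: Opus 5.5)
Your approach of comparing $\x^{k+1}$ and $\hat\x:=\x(\y^k,\z^k)$ directly, through the two growth inequalities and Fact~\ref{fact-2}, is viable. As written, though, it stops at the decisive step, and the obstacle you flag comes from a bookkeeping error rather than from the problem.

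Fact~\ref{fact-2} already compares $\mathcal L_\rho(\cdot,\y^k)$ with $F_{\x^k,\lambda}(\cdot,\y^k)$, and the latter already contains the term $\frac{1}{2\lambda}\|\x-\x^k\|^2$. Subtracting another $\frac{1}{2\lambda}\|\x-\x^k\|^2$ double counts it. Up to the constant $-\frac{r_{\y}}{2}\|\y^k\|^2$, which cancels between your first and third brackets, the correct sandwich is $-\frac{L_\rho+\lambda^{-1}}{2}\|\x-\x^k\|^2\le\Theta-\Psi\le\frac{L_\rho-\lambda^{-1}}{2}\|\x-\x^k\|^2$. Your upper bound with $L_\rho-2\lambda^{-1}$ does not follow from Fact~\ref{fact-2}. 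Your third-bracket constant $L_\rho+2\lambda^{-1}$ is loose, and with it, absorbing $a^2$ requires $\lambda^{-1}<2(r_{\x}-L_\rho)$. That is exactly the smallness condition you were worried about.

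Your ``cleaner'' display happens to be true and would suffice, but it has three problems:
\begin{enumerate}[label=(\roman*)]
\item it does not follow from your own step one;
\item it attributes a modulus $\lambda^{-1}+L_\rho$ to $\Psi$, whose modulus is $\lambda^{-1}+r_{\x}$, as you said earlier;
\item the two-stage account of where the factors of $\zeta$ come from is never carried out.
\end{enumerate}
As submitted, the argument therefore rests on the fallback citation of \cite[Proposition~2]{li2025nonsmooth}. That citation is also all the paper itself offers for this lemma.

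With the correct constants, your step one closes at once. Keeping both growth terms gives
\[
\frac{r_{\x}-L_\rho}{2}a^2\le\frac{L_\rho-\lambda^{-1}}{2}b^2-\frac{\lambda^{-1}+r_{\x}}{2}a^2+\frac{L_\rho+\lambda^{-1}}{2}(a+b)^2 .
\]
After expanding $(a+b)^2$, the $a^2$ terms net to $2(r_{\x}-L_\rho)a^2$ on the left. This gives $(r_{\x}-L_\rho)a^2\le(L_\rho+\lambda^{-1})ab+L_\rho b^2$, with no condition on $\lambda$.

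Write $t:=r_{\x}-L_\rho$ and $B:=L_\rho+\lambda^{-1}$. The larger root of the quadratic gives $a\le\bigl(\frac{B}{t}+\sqrt{L_\rho/t}\bigr)b$. Since $\sqrt{L_\rho/t}\le\sqrt{B/t}\le\frac12\bigl(1+\frac{B}{t}\bigr)$, you get $a\le\bigl(1+\frac{2B}{t}\bigr)b\le\zeta b$.

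This one-shot argument is self-contained and holds for every $\lambda>0$ and $r_{\x}>L_\rho$. It also gives a constant no larger than $\zeta$. The product form of $\zeta$ is inherited from the cited result and is not needed here.
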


\subsection{Proof of Lemma \ref{lem: weakcvx}}
\label{proof:lemma_lrho}
Under Assumption \ref{ass:basic}, the standard convex composite
weak convexity rule \cite[Lemma 4.2]{drusvyatskiy2019efficiency} implies that \(f=h_0\circ c_0\) is
\(L\)-weakly convex on \(\mathcal X\). Therefore, it remains to
show that
$
\x\mapsto
\|
(G(\x)+\frac{\y}{\rho})_+\|^2$
is weakly convex with the nonsmooth composite structure.

First, for each \(i\in[d]\), we know that
$
u\mapsto (h_i(u)+\frac{y_i}{\rho})_+$
is nonnegative and convex since \(h_i\) is convex. 
Then the map
$
u\mapsto (h_i(u)+\frac{y_i}{\rho})_+^2$
is convex. Indeed, \(u\mapsto h_i(u)+\frac{y_i}{\rho}\) is convex, the
map \(s\mapsto s_+\) is convex and nondecreasing, and \(s\mapsto s^2\) is
convex and nondecreasing on \(\mathbb R_+\). Hence the claim follows from
the standard composition rule for convex functions.
Define
$
M_i(\y):=
\max_{\x,\bar\x\in\mathcal X}
(h_i(c_i(\bar\x)+\nabla c_i(\bar\x)^\top(\x-\bar\x))+\frac{y_i}{\rho})_+$.
Since \(\mathcal X\) is compact, \(M_i(\y)<+\infty\). Moreover,
 the convex function
$
u\mapsto
\frac{1}{2}
(h_i(u)+\frac{y_i}{\rho})_+^2
$
is \(L_hM_i(\y)\)-Lipschitz. Applying again the
convex-composite weak convexity rule, we obtain that
$
\x\mapsto
\frac{1}{2}
(h_i(c_i(\x))+\frac{y_i}{\rho})_+^2$
is \(LM_i(\y)\)-weakly convex on \(\mathcal X\).
Since \(f\) is \(L\)-weakly convex, the function
\(\mathcal L_\rho(\cdot,\y)\) is
$
L(1+\rho\sum_{i=1}^d M_i(\y))
$-weakly convex on \(\mathcal X\). It remains to bound this modulus uniformly. Since
\(\y\in\mathcal Y\subseteq\R^d_+\), we have
$
M_i(\y)
\le
\max_{\x,\bar\x\in\mathcal X}|h_i(c_i(\bar\x)+\nabla c_i(\bar\x)^\top(\x-\bar\x))|+\frac{y_i}{\rho}$.
Therefore,
\[
\rho\sum_{i=1}^d M_i(\y)
\le
\rho\sum_{i=1}^d\max_{\x,\bar\x\in\mathcal X}|h_i(c_i(\bar\x)+\nabla c_i(\bar\x)^\top(\x-\bar\x))|
+\sum_{i=1}^d y_i
\le
\rho R_{\x}+R_{\y}.
\]
Thus \(\mathcal L_\rho(\cdot,\y)\) is
$
L(1+R_{\y}+\rho R_{\x})
$-weakly convex. 
The nonsmooth composite structure follows from the
representation
\[
\mathcal L_\rho(\x,\y)
=
H_{\y}(c_0(\x),c_1(\x),\ldots,c_d(\x))
-\frac{1}{2\rho}\|\y\|^2,
\]
where
$
H_{\y}(u_0,u_1,\ldots,u_d)
:=
h_0(u_0)
+
\frac{\rho}{2}
\sum_{i=1}^d
(h_i(u_i)+\frac{y_i}{\rho})_+^2$.
The function \(H_{\y}\) is convex, since each of its component
terms is convex.

For the last claim, fix \(\x\). For each \(i\), the function
$
y_i\mapsto
\frac{\rho}{2}
[
(G_i(\x)+\frac{y_i}{\rho})_+^2
-
\frac{y_i^2}{\rho^2}
]$
is concave and continuously differentiable. Hence \(\mathcal L_\rho(\x,\cdot)\) is concave and continuously
differentiable on \(\mathcal Y\). The proof is complete.

\section{Proof Details of Basic Descent Property}
\label{sec:suff_decrease}

\begin{lemma}[Primal Descent]
\label{lemma-F-des} 
For any \(k\ge0\), it follows that
\begin{align*}
& F(\x^{k}, \y^{k},\z^k)-F(\x^{k+1}, \y^{k+1},\z^{k+1})\\
\ge\ &
\frac{2\lambda^{-1}+r_{\x}-L_\rho}{2}\|\x^k-\x^{k+1}\|^2 +
\left\langle
\max\left\{G(\x^{k+1}), -\frac{\y^k}{\rho}\right\}
-r_{\y}\y^k,
\y^{k}-\y^{k+1}
\right\rangle
+\frac{r_{\y}}{2}\|\y^k-\y^{k+1}\|^2\\
&+ \frac{(2-\beta)r_{\x}}{2 \beta}\|\z^{k}-\z^{k+1}\|^{2}. 
\end{align*}
\end{lemma}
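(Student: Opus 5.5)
I will split the difference into three pieces:
\[
\begin{aligned}
F^k-F^{k+1}
={}&\bigl[F(\x^k,\y^k,\z^k)-F(\x^{k+1},\y^k,\z^k)\bigr]
+\bigl[F(\x^{k+1},\y^k,\z^k)-F(\x^{k+1},\y^{k+1},\z^k)\bigr]\\
&+\bigl[F(\x^{k+1},\y^{k+1},\z^k)-F(\x^{k+1},\y^{k+1},\z^{k+1})\bigr],
\end{aligned}
\]
writing $F^k:=F(\x^k,\y^k,\z^k)$, and bound each term separately. The three terms correspond to the primal, dual and proximal updates of Algorithm~\ref{alg}.

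For the primal term, write $\psi(\x):=F_{\x^k,\lambda}(\x,\y^k)+\frac{r_{\x}}{2}\|\x-\z^k\|^2$. This model is convex, and in fact $(\lambda^{-1}+r_{\x})$-strongly convex, because $\frac{1}{2\lambda}\|\x-\x^k\|^2$ is part of $F_{\x^k,\lambda}$. Since $\x^{k+1}$ minimizes $\psi$ over $\mathcal X$, we get $\psi(\x^k)-\psi(\x^{k+1})\ge\frac{\lambda^{-1}+r_{\x}}{2}\|\x^k-\x^{k+1}\|^2$.

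Next I compare $\psi$ with $F(\cdot,\y^k,\z^k)$ using Fact~\ref{fact-2}:
\begin{itemize}
\item at $\x^k$ the model is exact, since $\mathcal L_\rho(\x^k,\y^k)=F_{\x^k,\lambda}(\x^k,\y^k)$ (the linearizations coincide there and the proximal term vanishes);
\item at $\x^{k+1}$ the upper inequality gives $\mathcal L_\rho(\x^{k+1},\y^k)\le F_{\x^k,\lambda}(\x^{k+1},\y^k)+\frac{L_\rho-\lambda^{-1}}{2}\|\x^{k+1}-\x^k\|^2$.
\end{itemize}
Combining these, the first term is at least $\frac{\lambda^{-1}+r_{\x}}{2}\|\cdot\|^2-\frac{L_\rho-\lambda^{-1}}{2}\|\cdot\|^2=\frac{2\lambda^{-1}+r_{\x}-L_\rho}{2}\|\x^k-\x^{k+1}\|^2$, which is exactly the claimed coefficient.

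For the dual term, $F(\x^{k+1},\cdot,\z^k)$ equals $\mathcal L_\rho(\x^{k+1},\cdot)$, which is concave by Lemma~\ref{lem: weakcvx}, plus $-\frac{r_{\y}}{2}\|\y\|^2$. It is therefore $r_{\y}$-strongly concave and differentiable, with gradient at $\y^k$ equal to $\max\{G(\x^{k+1}),-\y^k/\rho\}-r_{\y}\y^k$. Strong concavity gives
\[
F(\x^{k+1},\y^{k+1},\z^k)\le F(\x^{k+1},\y^k,\z^k)+\langle\nabla_{\y}F(\x^{k+1},\y^k,\z^k),\y^{k+1}-\y^k\rangle-\frac{r_{\y}}{2}\|\y^{k+1}-\y^k\|^2 .
\]
Rearranging yields the inner-product term plus $\frac{r_{\y}}{2}\|\y^k-\y^{k+1}\|^2$. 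I use only concavity of the penalty part here, not its smoothness constant.

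For the proximal term, it equals $\frac{r_{\x}}{2}\bigl(\|\x^{k+1}-\z^k\|^2-\|\x^{k+1}-\z^{k+1}\|^2\bigr)$. Since $\z^{k+1}-\z^k=\beta(\x^{k+1}-\z^k)$, we have $\x^{k+1}-\z^k=\beta^{-1}(\z^{k+1}-\z^k)$ and $\x^{k+1}-\z^{k+1}=(1-\beta)\beta^{-1}(\z^{k+1}-\z^k)$. The term therefore equals $\frac{r_{\x}}{2\beta^2}(1-(1-\beta)^2)\|\z^{k+1}-\z^k\|^2=\frac{(2-\beta)r_{\x}}{2\beta}\|\z^k-\z^{k+1}\|^2$, matching the statement.

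Summing the three bounds gives the lemma. The only delicate step is the primal one: the strong-convexity modulus of the model must include both $\lambda^{-1}$ and $r_{\x}$, and Fact~\ref{fact-2} must be applied in the correct direction, with the upper bound at $\x^{k+1}$ and exactness at $\x^k$. This is where the coefficient $2\lambda^{-1}+r_{\x}-L_\rho$ comes from.
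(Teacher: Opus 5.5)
Your proof is correct and matches the paper's argument step for step. It uses the same three-way telescoping split, and the same $(\lambda^{-1}+r_{\x})$-strong convexity of the prox-linear subproblem, combined with model exactness at $\x^k$ and the upper bound of Fact~\ref{fact-2} at $\x^{k+1}$. It also uses the same $r_{\y}$-strong concavity in $\y$ and the same exact identity for the $\z$-update.
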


\begin{proof}
One can infer from the definition that
\(F_{\x^k,\lambda}(\cdot,\y^k)\) is \(\lambda^{-1}\)-strongly convex.
Therefore, we have
\begin{equation*}
\begin{aligned}
F(\x^{k}, \y^{k},\z^{k})
&=
F_{\x^k,\lambda}(\x^k,\y^k)
-\frac{r_{\y}}{2}\|\y^k\|^2
+ \frac{r_{\x}}{2}\|\x^k-\z^{k}\|^2 \\
&\ge
F_{\x^k,\lambda}(\x^{k+1},\y^k)
-\frac{r_{\y}}{2}\|\y^k\|^2
+ \frac{r_{\x}}{2}\|\x^{k+1}-\z^{k}\|^2
+\frac{\lambda^{-1}+r_{\x}}{2}\|\x^k-\x^{k+1}\|^2.
\end{aligned}
\end{equation*}
Moreover, Fact~\ref{fact-2} implies 
$F_{\x^k,\lambda}(\x^{k+1},\y^k)
\ge
\mathcal{L}_{\rho}(\x^{k+1},\y^k)
+
\frac{\lambda^{-1}-L_\rho}{2}\|\x^{k+1}-\x^k\|^2$.
It follows that
\begin{align}
\label{primaldec-key1}
F(\x^{k}, \y^{k},\z^{k})
&\ge
\mathcal{L}_{\rho}(\x^{k+1},\y^k)
-\frac{r_{\y}}{2}\|\y^k\|^2
+\frac{r_{\x}}{2}\|\x^{k+1}-\z^k\|^2 +\frac{2\lambda^{-1}+r_{\x}-L_\rho}{2}
\|\x^k-\x^{k+1}\|^2 \notag\\
&=
F(\x^{k+1},\y^{k}, \z^{k})
+
\frac{2\lambda^{-1}+r_{\x}-L_\rho}{2}
\|\x^k-\x^{k+1}\|^2.
\end{align}

Next, since \(F(\x,\cdot,\z)\) is \(r_{\y}\)-strongly concave, we have 
\begin{equation}
\label{primaldec-key2}
\begin{aligned}
&F(\x^{k+1}, \y^{k},\z^k)
-
F(\x^{k+1}, \y^{k+1},\z^k)\\
\ge\ &
\left\langle
\max\left\{G(\x^{k+1}), -\frac{\y^k}{\rho}\right\}
-r_{\y}\y^k,
\y^{k}-\y^{k+1}
\right\rangle
+\frac{r_{\y}}{2}\|\y^k-\y^{k+1}\|^2 .
\end{aligned}
\end{equation}
At last, on top of the update of variable
\(\z^{k+1}\), i.e.,
\(\z^{k+1}=\z^{k}+\beta(\x^{k+1}-\z^{k})\), we can verify
\begin{equation}
\label{primaldec-key3}
F(\x^{k+1}, \y^{k+1},\z^{k})
-
F(\x^{k+1},\y^{k+1}, \z^{k+1})
=
\frac{(2-\beta)r_{\x}}{2 \beta}
\|\z^{k}-\z^{k+1}\|^{2}.
\end{equation}
Summing up \eqref{primaldec-key1}, \eqref{primaldec-key2}, and
\eqref{primaldec-key3}, the desired result is obtained.
\end{proof}

\begin{lemma}[Dual Ascent]
\label{lemma-d-aes} 
For any \(k\ge0\), it follows that
\begin{align}
\label{dual-ascent}
d(\y^{k+1},\z^{k+1})-d(\y^{k},\z^k)
&\ge
\left\langle
\max\left\{
G(\x(\y^{k},\z^k)),
-\frac{\y^k}{\rho}
\right\}
-r_{\y}\y^k,
\y^{k+1}-\y^{k}
\right\rangle -\frac{L_d}{2}\|\y^k-\y^{k+1}\|^2 \notag\\
&\quad
+\frac{r_{\x}}{2}
\left\langle
\z^{k+1}-\z^{k},
\z^{k+1}+\z^{k}
-2 \x(\y^{k+1}, \z^{k+1})
\right\rangle .
\end{align}
\end{lemma}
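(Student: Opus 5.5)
The plan is to prove the dual ascent inequality by splitting the change of $d$ into a $\y$-step at fixed $\z^k$ and a $\z$-step at fixed $\y^{k+1}$:
\[
d(\y^{k+1},\z^{k+1})-d(\y^k,\z^k)
=\bigl[d(\y^{k+1},\z^k)-d(\y^k,\z^k)\bigr]+\bigl[d(\y^{k+1},\z^{k+1})-d(\y^{k+1},\z^k)\bigr].
\]
Each bracket is then lower bounded separately, and the two bounds are added.

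\emph{The $\y$-step.} By Lemma~\ref{lemma-dualdiff}, $d(\cdot,\z^k)$ is differentiable on $\mathcal Y$, with
\[
\nabla_{\y}d(\y^k,\z^k)=\max\{G(\x(\y^k,\z^k)),-\y^k/\rho\}-r_{\y}\y^k,
\]
and this gradient is $L_d$-Lipschitz in $\y$. The descent lemma for $L_d$-smooth functions, applied to $-d(\cdot,\z^k)$ along the segment $[\y^k,\y^{k+1}]\subseteq\mathcal Y$ (convexity of $\mathcal Y$ keeps the segment in the domain where the Lipschitz bound holds), gives
\[
d(\y^{k+1},\z^k)-d(\y^k,\z^k)\ge\langle\nabla_{\y}d(\y^k,\z^k),\y^{k+1}-\y^k\rangle-\frac{L_d}{2}\|\y^{k+1}-\y^k\|^2 .
\]
This is exactly the first line of \eqref{dual-ascent}.

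\emph{The $\z$-step.} Here I will not use smoothness of $d$ in $\z$, but its minimum structure. Write $\hat\x:=\x(\y^{k+1},\z^{k+1})$. This point is feasible for the minimization defining $d(\y^{k+1},\z^k)$, and it is optimal for $d(\y^{k+1},\z^{k+1})$. Hence
\[
d(\y^{k+1},\z^{k+1})-d(\y^{k+1},\z^k)\ge F(\hat\x,\y^{k+1},\z^{k+1})-F(\hat\x,\y^{k+1},\z^k)=\frac{r_{\x}}{2}\bigl(\|\hat\x-\z^{k+1}\|^2-\|\hat\x-\z^k\|^2\bigr).
\]
The identity $\|a-b\|^2-\|a-c\|^2=\langle c-b,\,2a-b-c\rangle$, with $a=\hat\x$, $b=\z^{k+1}$, $c=\z^k$, turns the right-hand side into
\[
\frac{r_{\x}}{2}\langle\z^k-\z^{k+1},\,2\hat\x-\z^{k+1}-\z^k\rangle=\frac{r_{\x}}{2}\langle\z^{k+1}-\z^k,\,\z^{k+1}+\z^k-2\hat\x\rangle .
\]
This is the last term of \eqref{dual-ascent}.

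Adding the two bounds gives the claim. The only step that needs care is the smoothness bound in $\y$: the Lipschitz constant $L_d$ from Lemma~\ref{lemma-dualdiff} is stated only on $\mathcal Y$, so one must check that the segment $[\y^k,\y^{k+1}]$ lies in $\mathcal Y$. It does, because $\mathcal Y$ is convex. The $\z$-part needs no smoothness at all, since it relies only on optimality of $\hat\x$ for $d(\y^{k+1},\z^{k+1})$.
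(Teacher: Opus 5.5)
Your proposal is correct and matches the paper's proof essentially step for step. The paper also splits the increment into a $\y$-step, bounded by the $L_d$-smoothness descent lemma from Lemma~\ref{lemma-dualdiff}, and a $\z$-step, bounded by plugging $\x(\y^{k+1},\z^{k+1})$ into the minimization defining $d(\y^{k+1},\z^k)$ and expanding the difference of the two proximal terms.
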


\begin{proof}
From Lemma~\ref{lemma-dualdiff} we know that
\(\nabla_{\y}d(\cdot,\z)\) is Lipschitz continuous with constant
\(L_d\). Then
\begin{align*}
&d(\y^{k+1},\z^k)-d(\y^{k},\z^k)\\
\ge\ &
\left\langle
\max\left\{
G(\x(\y^{k},\z^k)),
-\frac{\y^k}{\rho}
\right\}
-r_{\y}\y^k,
\y^{k+1}-\y^{k}
\right\rangle
-\frac{L_d}{2}\|\y^k-\y^{k+1}\|^2 .
\end{align*}
On the other hand, one has that
\begin{align*}
&d(\y^{k+1}, \z^{k+1})-d(\y^{k+1}, \z^{k})\\
=\ &
F(\x(\y^{k+1}, \z^{k+1}), \y^{k+1}, \z^{k+1})
-
F(\x(\y^{k+1}, \z^{k}), \y^{k+1},\z^{k}) \\
\geq\ &
F(\x(\y^{k+1}, \z^{k+1}), \y^{k+1},\z^{k+1})
-
F(\x(\y^{k+1}, \z^{k+1}), \y^{k+1},\z^{k}) \\
=\ &
\frac{r_{\x}}{2}
\|\x(\y^{k+1}, \z^{k+1})-\z^{k+1}\|^{2}
-
\frac{r_{\x}}{2}
\|\x(\y^{k+1}, \z^{k+1})-\z^{k}\|^{2} \\
=\ &
\frac{r_{\x}}{2}
\left\langle
\z^{k+1}-\z^{k},
\z^{k+1}+\z^{k}
-2 \x(\y^{k+1}, \z^{k+1})
\right\rangle .
\end{align*}
Finally, combining the above inequalities gives \eqref{dual-ascent}.
\end{proof}

The following lemma is identical to the proximal descent argument in
\cite[Lemma~7]{li2025nonsmooth}.
 
\begin{lemma}[Proximal Descent]
\label{lemma-p-des} 
For any \(k\ge0\), it follows that
\begin{equation*}
p(\z^k)-p(\z^{k+1})
\ge
\frac{r_{\x}}{2}
\left\langle
\z^{k+1}-\z^{k},
2 \x(\y(\z^{k+1}), \z^{k})-\z^{k}-\z^{k+1}
\right\rangle.
\end{equation*}
\end{lemma}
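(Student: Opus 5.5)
The plan is a direct two-sided comparison of the value function \(p\), freezing the dual variable at the maximizer associated with the new proximal center. Let \(\hat\y:=\y(\z^{k+1})\in\mathcal Y(\z^{k+1})\) be the selection used in the statement, and let \(\hat\x:=\x(\hat\y,\z^k)\) be the corresponding primal minimizer at the \emph{old} center \(\z^k\). This minimizer is well defined and unique because \(F(\cdot,\hat\y,\z^k)\) is \((r_{\x}-L_\rho)\)-strongly convex.

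First, I lower bound \(p(\z^k)\). Since \(p(\z^k)=\max_{\y\in\mathcal Y}d(\y,\z^k)\) and \(\hat\y\in\mathcal Y\),
\[
p(\z^k)\ge d(\hat\y,\z^k)=F(\hat\x,\hat\y,\z^k).
\]

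Second, I upper bound \(p(\z^{k+1})\). Because \(\hat\y\) is a maximizer at \(\z^{k+1}\),
\[
p(\z^{k+1})=d(\hat\y,\z^{k+1})=\min_{\x\in\mathcal X}F(\x,\hat\y,\z^{k+1})\le F(\hat\x,\hat\y,\z^{k+1}),
\]
where the last step just uses that \(\hat\x\in\mathcal X\) is a feasible test point.

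Subtracting the two bounds, all terms of \(F\) that do not depend on \(\z\) cancel, namely \(\mathcal L_\rho(\hat\x,\hat\y)\) and \(-\tfrac{r_{\y}}{2}\|\hat\y\|^2\). What remains is
\[
p(\z^k)-p(\z^{k+1})\ge \frac{r_{\x}}{2}\Bigl(\|\hat\x-\z^k\|^2-\|\hat\x-\z^{k+1}\|^2\Bigr).
\]

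Finally, I apply the elementary identity \(\|\a-\bm b\|^2-\|\a-\bm c\|^2=\langle \bm c-\bm b,\,2\a-\bm b-\bm c\rangle\) with \(\a=\hat\x\), \(\bm b=\z^k\), \(\bm c=\z^{k+1}\). This gives exactly \(\tfrac{r_{\x}}{2}\langle \z^{k+1}-\z^k,\,2\x(\y(\z^{k+1}),\z^k)-\z^k-\z^{k+1}\rangle\), which is the claimed bound.

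There is no genuine obstacle here. The only point needing care is the choice of test pair: the dual variable must be the maximizer at \(\z^{k+1}\), and the primal variable must be the minimizer at \(\z^k\). With this choice, one inequality comes from maximality over \(\y\) and the other from minimality over \(\x\), and the \(\z\)-independent parts of \(F\) cancel exactly. No property of \(\mathcal L_\rho\) beyond the well-posedness of \(\x(\cdot,\cdot)\) and \(\mathcal Y(\cdot)\) (compactness of \(\mathcal X\) and \(\mathcal Y\), strong convexity in \(\x\)) is used.
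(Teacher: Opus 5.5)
Your proof is correct and is essentially the standard argument the paper defers to (Lemma~7 of \cite{li2025nonsmooth}): you freeze the dual variable at $\y(\z^{k+1})$, use $p(\z^k)\ge d(\y(\z^{k+1}),\z^k)$ together with $p(\z^{k+1})=d(\y(\z^{k+1}),\z^{k+1})\le F(\x(\y(\z^{k+1}),\z^k),\y(\z^{k+1}),\z^{k+1})$, and expand the difference of the proximal terms. The sign and the identity $\|\a-\bm b\|^2-\|\a-\bm c\|^2=\langle \bm c-\bm b,2\a-\bm b-\bm c\rangle$ both check out.
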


\paragraph{Proof of Proposition~\ref{prop:decrease}}
From Lemmas~\ref{lemma-F-des}, \ref{lemma-d-aes}, and
\ref{lemma-p-des} in Appendix~\ref{sec:suff_decrease}, we know that
\begin{align}
\label{desascent-key1}
& \Phi(\x^k,\y^k,\z^k)
-
\Phi(\x^{k+1},\y^{k+1},\z^{k+1}) \notag\\
=\ &
F(\x^k,\y^k,\z^k)
-
F(\x^{k+1},\y^{k+1},\z^{k+1})
+
2\bigl(d(\y^{k+1},\z^{k+1})-d(\y^k,\z^k)\bigr) +
2\bigl(p(\z^k)-p(\z^{k+1})\bigr) \notag\\
\ge\ &
\frac{2\lambda^{-1}+r_{\x}-L_\rho}{2}
\|\x^k-\x^{k+1}\|^2
+
\frac{(2-\beta)r_{\x}}{2 \beta}
\|\z^{k}-\z^{k+1}\|^{2}
-\left(L_d-\frac{r_{\y}}{2}\right)
\|\y^k-\y^{k+1}\|^2 \notag\\
&
+
\underbrace{
\left\langle
\nabla_{\y} F(\x^{k+1}, \y^{k},\z^k)
-
2\nabla_{\y} F(\x(\y^{k}, \z^{k}), \y^{k}, \z^{k}),
\y^{k}-\y^{k+1}
\right\rangle
}_{\text{\ding{172}}} \notag\\
&
+
\underbrace{
2 r_{\x}
\left\langle
\z^{k+1}-\z^{k},
\x(\y(\z^{k+1}), \z^{k})
-\x(\y^{k+1}, \z^{k+1})
\right\rangle
}_{\text{\ding{173}}}.
\end{align}

Subsequently, we simplify the terms \text{\ding{172}} and
\text{\ding{173}}. First, for \text{\ding{172}}, we know that
\begin{align*}
\text{\ding{172}}
=\ &
\left\langle
\nabla_{\y}F(\x^{k+1},\y^k,\z^k),
\y^{k+1}-\y^k
\right\rangle +
2\left\langle
\nabla_{\y}F(\x(\y^k,\z^k),\y^k,\z^k)
-
\nabla_{\y}F(\x^{k+1},\y^k,\z^k),
\y^{k+1}-\y^k
\right\rangle .
\end{align*}
For the first term, one has that
$\left\langle
\nabla_{\y}F(\x^{k+1}, \y^{k},\z^k),
\y^{k+1}-\y^{k}
\right\rangle 
\ge
\frac{1}{\alpha}\|\y^{k}-\y^{k+1}\|^2$,
where it follows from the update of dual variables. On the other hand,
for the second term we have
\begin{align*}
&2\left\langle
\nabla_{\y} F(\x(\y^{k}, \z^{k}), \y^{k}, \z^{k})
-
\nabla_{\y} F(\x^{k+1}, \y^{k},\z^k),
\y^{k+1}-\y^{k}
\right\rangle\\
\ge\ &
-2L_G\|\x^{k+1}-\x(\y^{k}, \z^{k})\|
\|\y^{k}-\y^{k+1}\|\\
\ge\ &
-L_G\zeta^2\|\y^{k}-\y^{k+1}\|^{2}
-L_G\|\x^{k+1}-\x^{k}\|^{2},
\end{align*}
where the last inequality follows from Lemma~\ref{prop:lip} and
\(2|a||b|\leq \tau a^2+\tau^{-1}b^2\). Together, we obtain
\begin{equation}
\label{desascent-key2}
\text{\ding{172}}
\ge
\left(\frac{1}{\alpha}-L_G\zeta^2\right)
\|\y^{k}-\y^{k+1}\|^{2}
-
L_G\|\x^{k+1}-\x^{k}\|^{2}. 
\end{equation}

Then, we continue to bound \(\text{\ding{173}}\):
\begin{align}
\label{desascent-key3}
\text{\ding{173}}
=\ &
2 r_{\x}
\left\langle
\z^{k+1}-\z^{k},
\x(\y(\z^{k+1}), \z^{k})
-\x(\y^{k+1}, \z^{k+1})
\right\rangle \notag\\
=\ &
2 r_{\x}
\left\langle
\z^{k+1}-\z^{k},
\x(\y(\z^{k+1}), \z^{k})
-\x(\y(\z^{k+1}), \z^{k+1})
\right\rangle \notag\\
&+
2 r_{\x}
\left\langle
\z^{k+1}-\z^{k},
\x(\y(\z^{k+1}), \z^{k+1})
-\x(\y^{k+1}, \z^{k+1})
\right\rangle \notag\\
\ge\ &
-2r_{\x}\sigma_1\|\z^{k+1}-\z^k\|^2 +
2 r_{\x}
\left\langle
\z^{k+1}-\z^{k},
\x(\y(\z^{k+1}), \z^{k+1})
-\x(\y^{k+1}, \z^{k+1})
\right\rangle \notag\\
\ge\ &
-2r_{\x}\sigma_1\|\z^{k+1}-\z^k\|^2
-\frac{r_{\x}}{7\beta}\|\z^{k+1}-\z^k\|^2 -
7r_{\x}\beta
\|\x(\y(\z^{k+1}), \z^{k+1})
-\x(\y^{k+1}, \z^{k+1})\|^2,
\end{align}
where the inequality follows from \eqref{lip-z}, the Cauchy--Schwarz
inequality, and the AM--GM inequality.

Thus, the inequalities
\eqref{desascent-key1}--\eqref{desascent-key3} imply that
\begin{align}
\label{desas-keymid}
& \Phi(\x^k,\y^k,\z^k)
-
\Phi(\x^{k+1},\y^{k+1},\z^{k+1}) \notag\\
\geq\ &
\frac{2\lambda^{-1}+r_{\x}-L_\rho-2L_G}{2}
\|\x^{k}-\x^{k+1}\|^{2}
+
\left(
\frac{1}{\alpha}
-L_G\zeta^2
-L_d
+\frac{r_{\y}}{2}
\right)
\|\y^{k}-\y^{k+1}\|^{2} \notag\\
&+
\left(
\frac{(2-\beta)r_{\x}}{2 \beta}
-2r_{\x}\sigma_1
-\frac{r_{\x}}{7\beta}
\right)
\|\z^{k}-\z^{k+1}\|^{2} -
7r_{\x}\beta
\|\x(\y(\z^{k+1}),\z^{k+1})
-\x(\y^{k+1}, \z^{k+1})\|^2. 
\end{align}
On top of
$\|\y^{k+1}-\y_{+}^{k}(\z^k)\| 
\leq
\alpha L_G\|\x^{k+1}-\x(\y^k, \z^k)\| 
\leq
\alpha L_G \zeta\|\x^k-\x^{k+1}\|$,
we have with
$
\eta:=\alpha L_G\zeta$,
\begin{align}
\label{desas-final-key1}
\|\y^{k+1}-\y^{k}\|^{2} 
=
\|\y^{k+1}-\y_{+}^{k}(\z^{k})
+\y_{+}^{k}(\z^{k})-\y^{k}\|^{2} 
&\ge
\frac{1}{2}\|\y^{k}-\y_{+}^{k}(\z^{k})\|^{2}
-\|\y^{k+1}-\y_{+}^{k}(\z^{k})\|^{2} \notag\\
&\ge
\frac{1}{2}\|\y^{k}-\y_{+}^{k}(\z^{k})\|^{2}
-\eta^2\|\x^{k}-\x^{k+1}\|^2 .
\end{align}
On the other hand, by Lemma~\ref{lemma-sollip} 
we have
\begin{align}
\label{desas-final-key2}
&\|\x(\y(\z^{k+1}),\z^{k+1})
-\x(\y^{k+1}, \z^{k+1})\|^{2}\notag\\
\leq\ &
4\|\x(\y(\z^{k+1}),\z^{k+1})
-\x(\y(\z^{k}),\z^{k})\|^{2} +
4\|\x(\y(\z^{k}),\z^{k})
-\x(\y_+^{k}(\z^k), \z^{k})\|^{2} \notag\\
&+
4\|\x(\y_+^{k}(\z^k), \z^{k})
-\x(\y^{k+1}, \z^{k})\|^{2} +
4\|\x(\y^{k+1}, \z^{k})
-\x(\y^{k+1}, \z^{k+1})\|^{2} \notag\\
\leq\ &
8\sigma_1^2\|\z^{k}-\z^{k+1}\|^2
+
4\|\x(\y(\z^{k}),\z^{k})
-\x(\y_+^{k}(\z^k), \z^{k})\|^{2} +
4\sigma_2^2\eta^2\|\x^k-\x^{k+1}\|^2.
\end{align}
Substituting \eqref{desas-final-key1} and \eqref{desas-final-key2}
into \eqref{desas-keymid} yields
\begin{align*}
&\Phi(\x^k,\y^k,\z^k)
-
\Phi(\x^{k+1},\y^{k+1},\z^{k+1})\\
\geq\ &
\left(
\frac{2\lambda^{-1}+r_{\x}-L_\rho-2L_G}{2}
-28r_{\x}\beta\sigma_2^2\eta^2
\right)
\|\x^{k}-\x^{k+1}\|^{2}\\
&+
\left(
\frac{1}{\alpha}
-L_G\zeta^2
-L_d
+\frac{r_{\y}}{2}
\right)
\left(
\frac{1}{2}
\|\y^{k}-\y^{k}_+(\z^{k})\|^{2}
-\eta^2\|\x^{k+1}-\x^k\|^2
\right)\\
&+
\left(
\frac{(2-\beta)r_{\x}}{2 \beta}
-2r_{\x}\sigma_1
-\frac{r_{\x}}{7\beta}
-56r_{\x}\beta\sigma_1^2
\right)
\|\z^{k}-\z^{k+1}\|^{2}\\
&-
28r_{\x}\beta
\|\x(\y(\z^{k}),\z^{k})
-\x(\y_+^{k}(\z^k), \z^{k})\|^{2}.
\end{align*}

From the parameters choice of $r_{\x}$, $\lambda$, $\alpha$, $\beta$, we know \(\sigma_1\le 3/2\), and the following estimates hold:
\begin{itemize}
\item Since
$
\frac{1}{\alpha}
-L_G\zeta^2
-L_d
+\frac{r_{\y}}{2}
\ge
\frac{1}{2\alpha}$,
we have
$
\frac{1}{2}
\left(
\frac{1}{\alpha}
-L_G\zeta^2
-L_d
+\frac{r_{\y}}{2}
\right)
\ge
\frac{1}{4\alpha}
\ge
\frac{1}{8\alpha}$.

\item Since \(\beta\le 1/28\) and \(\sigma_1\le 3/2\), we have
\begin{align*}
&\frac{(2-\beta)r_{\x}}{2 \beta}
-2r_{\x}\sigma_1
-\frac{r_{\x}}{7\beta}
-56r_{\x}\beta\sigma_1^2\ge
\frac{6r_{\x}}{7\beta}
-\frac{7r_{\x}}{2}
-126r_{\x}\beta
\ge
\frac{4r_{\x}}{7\beta}.
\end{align*}

\item Since \(\lambda^{-1}\ge L_G\) and
\(\eta=\alpha L_G\zeta\), the condition
\(\alpha\le 1/(8L_G\zeta^2)\) implies
$
\frac{\eta^2}{\alpha}
=
\alpha L_G^2\zeta^2
\le
\frac{1}{8\lambda}$.
Moreover, since
$
\beta\le \frac{1}{14\alpha r_{\x}\sigma_2^2}$,
we have
$
28r_{\x}\beta\sigma_2^2\eta^2
\le
\frac{2\eta^2}{\alpha}
\le
\frac{1}{4\lambda}$.
Using also
$
\frac{1}{\alpha}
-L_G\zeta^2
-L_d
+\frac{r_{\y}}{2}
\le
\frac{1}{\alpha}$,
we obtain
\begin{align*}
&
\frac{2\lambda^{-1}+r_{\x}-L_\rho-2L_G}{2}
-28r_{\x}\beta\sigma_2^2\eta^2
-
\left(
\frac{1}{\alpha}
-L_G\zeta^2
-L_d
+\frac{r_{\y}}{2}
\right)\eta^2\ge
\frac{1}{\lambda}
-\frac{1}{4\lambda}
-\frac{1}{8\lambda}
\ge
\frac{7}{16\lambda}.
\end{align*}
\end{itemize}
Together all pieces, we complete the proof.

\end{document}